\DeclareMathOperator{\id}{id}
\DeclareMathOperator{\GL}{GL}
\DeclareMathOperator{\SL}{SL}
\DeclareMathOperator{\Ad}{Ad}
\DeclareMathOperator{\Adj}{Adj}
\DeclareMathOperator{\Spin}{Spin}
\newcommand{\pprime}{{\prime\prime}}
\newtheorem{theorem}{Theorem}[section]
\theoremstyle{definition}
\newtheorem{definition}[theorem]{Definition}
\newtheorem{problem}[theorem]{Problem}
\newtheorem{example}[theorem]{Example}
\newtheorem{lemma}[theorem]{Lemma}
\newtheorem{proposition}[theorem]{Proposition}
\theoremstyle{definition}\newtheorem{remark}[theorem]{Remark}
\newtheorem{corollary}[theorem]{Corollary}
\theoremstyle{remark}
\numberwithin{equation}{section}
\title{Multiple rational normal forms in Lie theory}
\author{Dmitriy Voloshyn}
\date{}
\newcounter{myfootnote}
\begin{document}
\maketitle

\makeatletter
\def\blfootnote{\xdef\@thefnmark{}\@footnotetext}
\makeatother
\begin{abstract}
    We study the decomposition of a generic element $g \in G$ of a connected reductive complex algebraic group $G$ in the form $g = N(g) B(g) \bar{u} N(g)^{-1}$ where $N: G \dashrightarrow \mathcal{N}_-$ and $B : G \dashrightarrow \mathcal{B}_+$ are rational maps onto a unipotent subgroup $\mathcal{N}_-$ and a Borel subgroup $\mathcal{B}_+$ opposite to $\mathcal{N}_-$, and $\bar{u}$ is a representative of a Weyl group element $u$. We introduce a class of rational Weyl group elements that give rise to such decompositions, and study their various properties.
\end{abstract}

\tableofcontents

\blfootnote{\textit{2010 Mathematics Subject Classification.} Primary 20G07; Secondary 20F55, 13F60.}
\blfootnote{\textit{Key words and phrases.} Normal form, rational Weyl group element, cluster algebra.}

\section{Introduction}
 Let $G$ be a connected complex reductive algebraic group, $(\mathcal{B}_+,\mathcal{B}_-)$ a pair of opposite Borel subgroups, $\mathcal{N}_\pm \subset \mathcal{B}_\pm$ their unipotent radicals, $\mathcal{H}:=\mathcal{B}_+\cap \mathcal{B}_-$ a Cartan subgroup, $W:=N_G(\mathcal{H})/\mathcal{H}$ the corresponding Weyl group. The goal of the paper is to address the following problem:

\begin{problem}\label{problem}
    For a given $G$, describe elements $u$ of its Weyl group $W$ for which there exist rational maps $N : G \dashrightarrow \mathcal{N}_-$ and $B : G \dashrightarrow \mathcal{B}_+$ such that for a generic $g \in G$, 
    \begin{equation}\label{eq:problem_dec}
        g = N(g) B(g)\overline{u}N(g)^{-1}
    \end{equation} 
    where $\overline{u} \in N_G(\mathcal{H})$ is a representative of $u$.
\end{problem}

Throughout the paper, we call an element $u \in W$ a \emph{solution} of Problem~\ref{problem} if there exist rational maps $B$ and $N$ that give rise to the decomposition~\eqref{eq:problem_dec}. In the next example, we illustrate the problem in $\GL_2(\mathbb{C})$.

\begin{example}
    For $G := \GL_2(\mathbb{C})$, identify the Weyl group with the group of permutation matrices:
    \begin{equation*}
        W = \left \{ w_0:=\begin{bmatrix}
            0 & 1 \\ 1 & 0
        \end{bmatrix}, \ I:=\begin{bmatrix}
1 & 0\\ 
0 & 1
\end{bmatrix}\right\}.
    \end{equation*}
    Set $D:= \{X \in \GL_2(\mathbb{C})\ | \ x_{12} \neq 0\}$. Then every $X \in D$ has a decomposition of the form
    \begin{equation*}
        X = \begin{bmatrix}
            1 & 0\\
            \frac{x_{22}}{x_{12}} & 1 
        \end{bmatrix} \begin{bmatrix}
            x_{12} & x_{11}+x_{22}\\
            0 & -\frac{\det X}{x_{12}} 
        \end{bmatrix} \begin{bmatrix} 0 & 1\\ 1 & 0
        \end{bmatrix}\begin{bmatrix}
            1 & 0\\
            -\frac{x_{22}}{x_{12}} & 1 
        \end{bmatrix};
    \end{equation*}
    therefore, the element $w_0$ is a solution of Problem~\ref{problem}. On the other hand, let us set
    \begin{equation*}
        \Delta:=\Delta(X):=x_{11}^2 - 2x_{11}x_{12} + x_{22}^2 + 4x_{12}x_{21}.
    \end{equation*}
    Then, for the identity permutation, $X \in D$ decomposes as
    \begin{equation*}
        X = \begin{bmatrix}
            1 & 0\\
            \frac{-x_{11}\pm \sqrt{\Delta}}{2x_{12}} & 1 
        \end{bmatrix} 
        \begin{bmatrix}
            \frac{1}{2}(x_{11}+x_{22}) \pm \frac{1}{2}\sqrt{\Delta} & x_{12}\\ 0 & \frac{1}{2}(x_{11}+x_{22}) \mp \frac{1}{2}\sqrt{\Delta}
        \end{bmatrix}
        \begin{bmatrix}
            1 & 0\\
            -\frac{-x_{11}\pm \sqrt{\Delta}}{2x_{12}} & 1 
        \end{bmatrix} 
    \end{equation*}
    Clearly, the identity permutation is not a solution of Problem~\ref{problem}.
\end{example}

Our motivation for studying Problem~\ref{problem} originates in the theory of cluster algebras, as defined by S. Fomin and A. Zelevinsky in~\cite{fathers}, and in particular, in its interaction with Poisson geometry. For an overview of the subject and its applications, we refer to~\cite{fomin13,fomin_intro,dasbuch,leclerc,williams_intro}.

In~\cite{double}, M. Gekhtman, M. Shapiro, and A. Vainshtein constructed a generalized cluster structure in the ring of invariants\footnote{We use the notation $\Ad\mathcal{N}_-$ to distinguish the invariant ring from the one where $\mathcal{N}_-$ acts by left or right translations. So, $p \in \mathbb{C}[\GL_n]^{\Ad\mathcal{N}_-}$ if and only if $p(ngn^{-1}) = p(g)$ for all $g\in G$, $n \in \mathcal{N}_-$. } $\mathbb{C}[\GL_n]^{\Ad \mathcal{N}-}$, where $\mathcal{N}_-$ acts on $\GL_n$ by conjugation. The initial extended cluster of $\mathbb{C}[\GL_n]^{\Ad \mathcal{N}_-}$ is a part of the initial extended clusters in $\mathbb{C}[\GL_n]$ and $\mathbb{C}[\GL_n \times \GL_n]$. In the first case, the obtained cluster structure is compatible\footnote{A cluster structure is called \emph{compatible} with a Poisson bracket if in any given extended cluster, the Poisson bracket is quadratic.} with the Poisson bracket induced from the Poisson dual $\GL_n^*$ of $\GL_n$, and in the second case, the Poisson structure on $\GL_n \times \GL_n$ is that of the Drinfeld double of $\GL_n$. As we showed in \cite{multdual,multdouble}, the same initial extended cluster of $\mathbb{C}[\GL_n]^{\Ad \mathcal{N}_-}$ is a part of the initial extended clusters in $\mathbb{C}[\GL_n]$ and $\mathbb{C}[\GL_n \times \GL_n]$ compatible with the Poisson brackets from the Belavin--Drinfeld class. We refer to \cite{chari,etingof} for details on Poisson geometry.

The construction of the initial extended cluster in $\mathbb{C}[\GL_n]^{\Ad\mathcal{N}_-}$ utilizes a solution of Problem~\ref{problem}. Let $C:=s_1 \cdots s_{n-1}$ be a special Coxeter element in type $A_{n-1}$. By \cite[Lemma 8.2]{double}, there exist rational maps $B : \GL_n(\mathbb{C}) \dashrightarrow \mathcal{B}_+$ and $N : \GL_n(\mathbb{C}) \dashrightarrow \mathcal{N}_-$ such that for a generic $X \in \GL_n(\mathbb{C})$,
\begin{equation*}
    X = N(X)B(X)\overline{C}N(X)^{-1}.
\end{equation*}
The initial extended cluster comprises $n-1$ isolated frozen variables, which are the characters $\chi^{\omega_i}$ corresponding to the fundamental weights $\omega_1,\ldots,\omega_{n-1}$,
and the rest of the variables is obtained from the initial extended cluster of $\mathbb{C}[\GL_{n-1}]^{\mathcal{N}_-}$, where $\mathcal{N}_-$ acts by multiplication on the right, via the pullback by the map
\begin{equation*}
    \GL_n \ni X \mapsto B(X)^{[2,n]}_{[2,n]} \in \GL_{n-1}.
\end{equation*}

It is not known how to extend the above construction to other Lie types. In order to construct a generalized cluster structure in $\mathbb{C}[G]^{\Ad \mathcal{N}_-}$ (and in related Poisson varieties), we are thus led to the study of Problem~\ref{problem}, which we also find to be interesting in its own right.

Let us briefly explain our results. We call an element $u \in W$ of a Weyl group $W$ \emph{rational} if a certain graph $\Gamma(u)$ associated with $u$ is acyclic (see Definition~\ref{d:rational}). In \cite[Lemma 8.3]{double}, it was shown that the longest Weyl group element $w_0 \in W$ is a solution of Problem~\ref{problem} in any Lie type. We develop a recursive procedure that constructs a pair of maps $(B,N)$ for $u$ from the pair of maps for $w_0$ (or, more generally, from any other solution of Problem~\ref{problem}). If $u$ is rational, then the recursive procedure ends in a finite number of steps, and thus one obtains the decomposition~\eqref{eq:problem_dec} for $u$. We also prove a partial converse: if $u$ is a sufficiently long element and is a solution of Problem~\ref{problem}, then $u$ is rational (see Theorem~\ref{thm1}). We conjecture that the class of solutions of Problem~\ref{problem} is precisely the class of rational Weyl group elements.

We then define a \emph{rationality graph} $\Gamma(W)$ whose vertices are the rational Weyl group elements of $W$, and two vertices $u,v \in W$ are connected by an edge iff there is a simple reflection $s$ such that $u = sv$. Surprisingly, the graph $\Gamma(W)$ is connected, and it contains more than one vertex if and only if $w_0$ induces a nontrivial automorphism of the Dynkin diagram; that is, only in type $A_r$, $r \geq 2$, $D_r$ for $r$ odd, and $E_6$ (see Theorem \ref{thm2}; for examples of $\Gamma(W)$, see Appendix~\ref{appendix}).

In the context of constructing a generalized cluster structure in $\mathbb{C}[\SL_n]^{\Ad\mathcal{N}_-}$, the special Coxeter element $C$ and its inverse $C^{-1}$ exhibit the following properties: 1) The two elements are the only rational Coxeter elements; 2) The two elements are the only vertices of valency $1$ in $\Gamma(W)$. However, no Coxeter element in any other Lie type is rational; nevertheless, in type $D_r$ for $r$ odd, the rationality graph $\Gamma(W)$ also contains two elements of valency $1$, and this similarity leads us to believe that the two elements play a role in the construction of a generalized cluster structure in $\mathbb{C}[\Spin(2r)]^{\Ad \mathcal{N}_-}$. We will study this question in our next paper.

We give precise definitions and statements of our main results in the next section. In Sections~\ref{s:proofs1} and \ref{s:proofs2} we provide the proofs.

\paragraph{Acknowledgements.} At various stages of this work, we benefited from discussions with Benjamin Enriquez, Michael Gekhtman, Antoine de Saint Germain, Igor Krylov, Jiang-Hua Lu, Hugh Thomas and Anderson Vera. This research was supported by the Institute for Basic Science (IBS-R003-D1).
 \section{Main results}\label{s:main_results}
 Let us fix the setup and notation for the rest of the work. Let $G$ be a connected complex reductive algebraic group and $\mathfrak{g}$ be its Lie algebra. Denote by $\Pi$ the set of roots of $\mathfrak{g}$, $\Delta$ a set of simple roots, $\Pi_+$ and $\Pi_-$ the sets of positive and negative roots (in Section~\ref{s:main_results2} and Section~\ref{s:proofs2}, we further assume that the root system $\Pi$ is indecomposable). We let $\mathcal{B}_+$ and $\mathcal{B}_-$ to be the corresponding Borel subgroups of $G$, $\mathcal{N}_\pm \subseteq \mathcal{B}_\pm$ their unipotent radicals, $\mathcal{H} :=\mathcal{B}_+ \cap \mathcal{B}_-$ the corresponding Cartan subgroup, $W:=N_G(\mathcal{H})/\mathcal{H}$ the Weyl group.

\subsection{Solutions of Problem \ref{problem}}\label{s:main_results1}
In this subsection, we state the first part of our main results. The proofs are collected in Section~\ref{s:proofs1}.

For a subset $A \subseteq \Pi_+$, denote 
\begin{equation}
    \Adj (A):= \{ \alpha \in \Pi_+ \ | \ \exists \beta \in A : \ \alpha \leq \beta\}.
\end{equation}
\begin{definition}\label{d:nuseq}
The \emph{$\nu$-sequence} of an element $u \in W$ is the sequence $\{\nu^k(u)\}_{k \geq 0}$ of subsets of $\Pi_+$ defined as
\begin{equation}\label{eq:nuseq}
    \nu^0(u):=u(\Pi_+) \cap \Pi_+, \ \ \nu^k(u) := u(\Adj \nu^{k-1}(u))\cap \Pi_+, \ \ k \geq 1.
\end{equation}
\end{definition}
In Proposition~\ref{p:nu_descends}, we show that the sequence $\{\nu^k(u)\}_{k \geq 0}$ is descending and stabilizes after a finite number of terms. Thus we can make sense of the limit
\begin{equation}
    \nu(u) := \lim_{k \rightarrow \infty} \nu^k(u);
\end{equation}
that is, if $\nu^m(u) = \nu^{m+1}(u) = \cdots$ for some $m \geq 1$, then $\nu(u) = \nu^m(u)$.

\begin{example}\label{e:nuseq1}
    Consider the Coxeter element $C:= s_1 s_2$ in type $A_2$. The associated $\nu$-sequence is given by $\nu^0(C) = \{\alpha_2\}$, $\nu(C) = \nu^1(C) = \emptyset$. 
\end{example}
\begin{example}\label{e:nuseq2}
    Consider the Coxeter element $C:=s_1 s_2$ in type $B_2$, with $\alpha_2$ being the short root. Then $\nu^0(C) = \{\alpha_2,\alpha_1+2\alpha_2\}$. Since $\theta:=\alpha_1 + 2\alpha_2$ is the highest root in type $B_2$, we see that $\Adj \nu^0(C) = \Pi_+$, and therefore, $\nu(C) = \nu^0(C) \neq \emptyset$.
\end{example}
For more examples of $\nu$-sequences, see Sections~\ref{s:rat_coxeter}-\ref{s:typeDrvalency1}.

\begin{definition}\label{d:gammau}
    The \emph{rationality graph} $\Gamma(u)$ of an element $u \in W$ is an oriented graph constructed as follows:
    \begin{itemize}
        \item The vertex set of $\Gamma(u)$ is given by the set $\nu^0(u)$;
        \item Two roots $\alpha,\beta \in \nu^0(u)$ are connected by an edge $\alpha \rightarrow \beta$ if and only if $u^{-1}(\alpha) \leq \beta$.
    \end{itemize}
\end{definition}

\begin{example}
    For the Coxeter element $s_1s_2$ in type $A_2$, the rationality graph consists of a single vertex $\alpha_2$ with no loop. For the Coxeter element $s_1s_2$ in type $B_2$ from Example~\ref{e:nuseq2}, the rationality graph is given by
    \begin{equation*}
        \xymatrix{
        \alpha_2 \ar[r] & \theta \ar@(ru,dr)
        }
    \end{equation*}
\end{example}

\begin{definition}\label{d:rational}
    An element $u \in W$ is called \emph{rational} if one of the following equivalent conditions is satisfied:
    \begin{enumerate}[i)]
        \item The set $\nu(u)$ is empty;\label{i:rational1}
        \item The graph $\Gamma(u)$ has no cycles.\label{i:rational2}
    \end{enumerate}
\end{definition}
We prove the equivalence of the above conditions in Proposition~\ref{p:nu_nonrat}. In Proposition~\ref{p:fixedpt} we show that $\Gamma(u)$ contains a loop if and only if $\alpha \leq u(\alpha)$ for some $\alpha \in \Pi_+$. As a consequence, a rational element does not fix any root. Another consequence is that if the highest root is a vertex of $\Gamma(u)$, then $u$ is not rational (see Proposition~\ref{p:highrootnonrat}).

\begin{theorem}\label{thm1}
    For any rational element $u \in W$ and its representative $\overline{u} \in N_G(\mathcal{H})$, there exist rational maps $B : G \dashrightarrow \mathcal{B}_+$ and $N : G \dashrightarrow \mathcal{N}_-$ such that for a generic $g \in G$
    \begin{equation}\label{eq:thm1dec}
        g = N(g) B(g) \overline{u} N(g)^{-1}.
    \end{equation}
    Conversely, if $u \in W$ is any element with $\ell(u) \geq \ell(w_0) - 2$ and the decomposition~\eqref{eq:thm1dec} exists for a generic $g \in G$ and some rational maps $B$ and $N$, then $u$ is rational.
\end{theorem}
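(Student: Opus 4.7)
The proof splits into the two implications. For the forward direction, the approach is a recursive construction starting from the known solution for $w_0$ established in \cite[Lemma 8.3]{double} and producing a solution for $u$ via finitely many reduction steps guided by the acyclic structure of $\Gamma(u)$. For the converse, the length hypothesis $\ell(u)\geq \ell(w_0)-2$ restricts $u$ to a short explicit list, which can be analyzed case by case.

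For the forward direction, the essential tool should be a \emph{reduction lemma}: given a solution $(B', N')$ for some $v \in W$ and a simple reflection $s$ satisfying an explicit combinatorial compatibility expressed in terms of $\Gamma(v)$, one can modify $(B', N')$ to produce a rational solution for a related element of shorter length (say $sv$ or $vs$). The algebraic mechanism is to factor $\overline{v} = \overline{s}\,\overline{s^{-1}v}$ in $g = N' B' \overline{v} (N')^{-1}$ and transport $\overline{s}$ through $B'$ via the standard conjugation identities between Weyl representatives and unipotent one-parameter subgroups, absorbing the correction terms rationally into $B'$ and $N'$. Iterating the lemma along a reduction path from $w_0$ down to $u$, whose existence is ensured by the acyclic orientation of $\Gamma(u)$, produces $(B, N)$ for $u$ in at most $\ell(w_0) - \ell(u)$ steps. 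The principal difficulty here is identifying the precise combinatorial condition on $s$ under which the absorption of correction terms remains rational, and verifying that this condition is inherited at each stage of the recursion; careful normalization of Weyl representatives $\overline{u}$ and bookkeeping of the domains of definition of the rational maps are also required throughout.

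For the converse, the hypothesis $\ell(u)\geq \ell(w_0)-2$ restricts $u$ to the finite set $\{w_0\} \cup \{w_0 s_i\} \cup \{w_0 s_i s_j : \ell(w_0 s_i s_j)=\ell(w_0)-2\}$. A direct calculation of $\nu^0(u)$ and $\Gamma(u)$ identifies the non-rational elements: for example, $\nu^0(w_0 s_i) = \{\sigma(\alpha_i)\}$ where $\sigma := -w_0$ acts on simple roots, so $w_0 s_i$ is non-rational precisely when $\sigma$ fixes $\alpha_i$, and hence every $w_0 s_i$ is non-rational in the types where $w_0 = -1$. For each non-rational $u$ on the list one must rule out the existence of rational maps realizing~\eqref{eq:thm1dec}. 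The prototype is the $\GL_2$ calculation in the Introduction, where the identity permutation forces the square root $\sqrt{\Delta}$; in general, a cycle in $\Gamma(u)$ should force a nontrivial Galois symmetry on the set of $(N,B)$ solutions of~\eqref{eq:thm1dec}, obstructing rationality of any single branch. Establishing this obstruction uniformly, particularly for the $\ell(w_0)-2$ elements where the cycle may have length greater than one, is what I expect to be the main technical hurdle of the theorem.
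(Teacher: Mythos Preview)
Your overall architecture (forward construction from $w_0$, finite case analysis for the converse) matches the paper, but the actual mechanisms differ in both directions, and the forward direction as you describe it has a real gap.

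\textbf{Forward direction.} The paper does \emph{not} descend from $w_0$ to $u$ one simple reflection at a time. Instead it builds a single iterative sequence of rational maps
\[
P_0(g)=\tilde B(g)\,\overline{w_0}\,\overline{u}^{-1},\qquad
P_k(g)=[P_{k-1}(g)]_{\oplus}\,\overline{u}\,[P_{k-1}(g)]_{-}\,\overline{u}^{-1},
\]
and shows that $P_k(g)$ lies in a shrinking chain of parabolic subgroups governed by the sets $\nu^k(u)$; acyclicity of $\Gamma(u)$ is exactly what forces $\nu^k(u)=\emptyset$ for large $k$, so $P_k$ stabilizes in $\mathcal B_+$ and yields $B$. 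Your proposed step-by-step reduction $w_0\to s_{i_1}w_0\to\cdots\to u$ would require each intermediate element to already be a solution, and you justify the existence of such a path by ``the acyclic orientation of $\Gamma(u)$''. That conflates two different graphs: $\Gamma(u)$ is a graph on \emph{roots}, while a path of solutions lives in the graph $\Gamma(W)$ on rational \emph{Weyl elements}. The existence of a path through rational elements is precisely the connectedness of $\Gamma(W)$, which the paper proves as a separate theorem with its own combinatorial argument (Lemmas~\ref{l:correct_simple} and~\ref{l:rat_v_lower}); it is not a consequence of the rationality of the single element $u$. So either you prove that connectivity first (making the forward direction depend on Theorem~\ref{thm2}), or you switch to the paper's direct parabolic-approximation argument, which needs only $\nu(u)=\emptyset$.

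\textbf{Converse.} Your instinct that a cycle forces multiple $(N,B)$ solutions is exactly right, but the paper makes this operational without Galois language: it observes (Proposition~\ref{p:nbrat}) that $u$ is a solution iff the map $\mathcal T_{\overline u}(n,b)=nb\overline u n^{-1}$ is birational, and then (Lemma~\ref{l:fibers}) that non-birationality follows once one exhibits, for generic $b\in\mathcal B_+$, a nontrivial $n\in\mathcal N_-$ with $n^{-1}b\,\overline u\,n\,\overline u^{-1}\in\mathcal B_+$. The remaining work is to construct such an $n$ explicitly in three situations corresponding to the possible cycles when $|\nu^0(u)|\le 2$: a fixed simple root, a cycle of pairwise orthogonal simple roots, and a $2$-cycle on $\alpha_i,\alpha_i+\alpha_j$ with $a_{ij}=a_{ji}=-1$ (Propositions~\ref{p:fixer_notsolution}, \ref{p:orthosimpcycle}, \ref{p:sAnonex}). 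These are concrete rank-one/rank-two computations with Chevalley generators, and together with the contrapositive of the forward construction (Corollary~\ref{c:negative_notsolution}) they cover all non-rational $u$ with $\ell(u)\ge\ell(w_0)-2$. Replacing your ``Galois symmetry'' heuristic by this explicit fiber construction is what makes the argument go through.
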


In other words, any rational element $u \in W$ is a solution of Problem~\ref{problem}, and conversely, any solution $u$ of Problem~\ref{problem} with $\ell(u) \geq \ell(w_0) - 2$ is rational. We conjecture that the solutions of Problem~\ref{problem} are precisely the rational Weyl group elements. 

The main part of Theorem~\ref{thm1} rests on the following construction, which we precede with a technical definition.

\begin{definition}\label{d:nu_relative}
    Given $u,v \in W$, the \emph{$\nu$-sequence of $u$ relative to $v$} is the sequence $\{\nu^k(u\,|\,v)\}_{k \geq 0}$ of subsets of $\Pi_+$ defined as follows:
    \begin{equation}
    \begin{aligned}
        &\nu^0(u\,|\,v) := uv^{-1}(\Pi_-) \cap \Pi_+;\\
        &\nu^k(u\,|\,v) := u(\Adj \nu^{k-1}(u\,|\,v)) \cap \Pi_+.
    \end{aligned}
    \end{equation}
\end{definition}
If the $\nu$-sequence of $u$ relative to some $v$ stabilizes after a finitely many terms, we set
\begin{equation}
    \nu(u\,|\,v) := \lim_{k \rightarrow \infty} \nu^k(u\,|\,v).
\end{equation}
Clearly, the $\nu$-sequence of $u$ (see Definition~\ref{d:nuseq}) is the $\nu$-sequence of $u$ relative to $w_0$.

For a generic element $g \in G$, denote by $g_- \in \mathcal{N}_-$ and $g_{\oplus} \in \mathcal{B}_+$ the unique elements such that $g = g_- g_\oplus$ (the Gaussian decomposition). Let us a fix any solution $v \in W$ of Problem~\ref{problem}, and let us denote by $\tilde{B} : G \dashrightarrow \mathcal{B}_+$ and $\tilde{N} : G \dashrightarrow \mathcal{N}_-$ the rational maps such that for a generic $g \in G$,
\begin{equation}\label{eq:decv}
    g = \tilde{N}(g) \tilde{B}(g) \overline{v} \tilde{N}(g)^{-1}
\end{equation}
for some representative $\overline{v}$ of $v$. For any other element $u \in W$, construct a sequence of rational maps $\{P_k : G \dashrightarrow G\}_{k \geq 0}$ via
\begin{equation}\label{eq:pk}
\begin{aligned}
    &P_0(g):= \tilde{B}(g)\overline{v}\,\overline{u}^{-1},\\
    &P_k(g) := [P_{k-1}(g)]_{\oplus} \overline{u} [P_{k-1}(g)]_- \overline{u}^{-1}, \ \ k \geq 1;
\end{aligned}
\end{equation}
and a sequence of rational maps $\{N_k : G \dashrightarrow \mathcal{N}_-\}_{k \geq 0}$ as
\begin{equation}\label{eq:nk}
    N_k(g) := \tilde{N}(g) \tilde{N}(P_k(g)\overline{u})^{-1}, k \geq 0. 
\end{equation}
\begin{proposition}\label{p:relative_existence}
    For any solution $v \in W$ of Problem~\ref{problem} and any $u \in W$ such that $\nu(u \, | \, v) = \emptyset$, $u$ is also a solution of Problem~\ref{problem}. More precisely, for a given representative $\overline{u}$, there is $m \geq 0$ such that for a generic $g \in G$,
    \begin{align*}
        &B(g) := P_m(g) = P_{m+1}(g) = \ldots \in \mathcal{B}_+;\\
        &N(g) := N_m(g) = N_{m+1}(g) = \ldots \in \mathcal{N}_-,
    \end{align*}
    and such that
    \begin{equation*}
        g = N(g) B(g) \overline{u} N(g)^{-1}.
    \end{equation*}
\end{proposition}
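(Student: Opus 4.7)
The plan is to verify, by direct manipulation of the iterative formulas, that the sequence $\{P_k(g)\}$ eventually lands in $\mathcal{B}_+$ and that the stabilized iterate produces the desired decomposition. First, I would establish, by induction on $k$, the master identity
\[
P_k(g)\,\overline{u} \;=\; C_k^{-1}\,\tilde{B}(g)\,\overline{v}\,C_k, \qquad C_k := P_0^-(g)\,P_1^-(g)\cdots P_{k-1}^-(g) \in \mathcal{N}_-.
\]
The base case $k=0$ is immediate from $P_0(g)\overline{u} = \tilde{B}(g)\overline{v}$, and the inductive step follows from the rewriting $P_{k+1}(g)\overline{u} = P_k^\oplus(g)\,\overline{u}\,P_k^-(g) = (P_k^-(g))^{-1}\,P_k(g)\,\overline{u}\,P_k^-(g)$. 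Conjugating by $\tilde{N}(g)$ and invoking $\tilde{B}(g)\overline{v} = \tilde{N}(g)^{-1}g\,\tilde{N}(g)$, this identity yields, for every $k$ in the common domain of the iterates,
\[
g \;=\; \bigl(\tilde{N}(g)\,C_k\bigr)\,P_k(g)\,\overline{u}\,\bigl(\tilde{N}(g)\,C_k\bigr)^{-1}.
\]

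Next I would show that the iteration stabilizes in finitely many steps, via the inductive support claim: the root support of $P_k^-(g)$, viewed as an element of $\mathcal{N}_-$ over root subgroups, is contained in $-\nu^k(u\,|\,v) \subset \Pi_-$. The base case reduces to the standard fact that for generic $b \in \mathcal{B}_+$ and any $y \in W$, the negative Gaussian factor of $b\overline{y}$ is supported on $\Pi_-\cap y^{-1}(\Pi_+)$; with $y = vu^{-1}$ this set coincides with $-\nu^0(u\,|\,v)$. For the inductive step, the factor $\overline{u}P_k^-(g)\overline{u}^{-1}$ has root support in $u(-\nu^k(u\,|\,v))$, and when one Gauss-decomposes the product $P_{k+1}(g) = P_k^\oplus(g)\cdot\overline{u}P_k^-(g)\overline{u}^{-1}$, Chevalley commutators between positive roots in $P_k^\oplus(g)$ and roots in the conjugate generate additional negative roots. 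The operation $\Adj$ in the recursion $\nu^{k+1}(u\,|\,v) = u(\Adj\nu^k(u\,|\,v))\cap\Pi_+$ is precisely the closure needed to absorb these extra roots. Since $\nu(u\,|\,v) = \emptyset$, some $\nu^m(u\,|\,v) = \emptyset$, forcing $P_m^-(g) = e$ and $P_m(g) \in \mathcal{B}_+$; the recursion then gives $P_{m+1}(g) = P_m(g)$, and the sequence stabilizes.

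To conclude, I set $B(g) := P_m(g)\in\mathcal{B}_+$ and $N(g) := \tilde{N}(g)\,C_m \in \mathcal{N}_-$; the identity from the first step gives $g = N(g)\,B(g)\,\overline{u}\,N(g)^{-1}$. To match $N(g)$ with $N_m(g) = \tilde{N}(g)\,\tilde{N}(P_m(g)\overline{u})^{-1}$ of the statement, I would use that the identity $P_m(g)\overline{u} = C_m^{-1}\tilde{B}(g)\overline{v}\,C_m$ already expresses $P_m(g)\overline{u}$ in the form $n\cdot b\overline{v}\cdot n^{-1}$ prescribed by the $v$-decomposition, so generic uniqueness forces $\tilde{N}(P_m(g)\overline{u}) = C_m^{-1}$, giving $\tilde{N}(g)\,C_m = N_m(g)$. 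The main obstacle is the inductive support statement: precisely tracking how Chevalley commutators in the Gauss decomposition of $P_k^\oplus(g)\cdot\overline{u}P_k^-(g)\overline{u}^{-1}$ generate new negative roots and verifying compatibility with $u\circ\Adj$. This is the technical crux of the proof and explains why $\Adj$ is the correct closure operation in Definition~\ref{d:nu_relative}.
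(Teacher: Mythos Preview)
Your master identity $P_k(g)\overline{u} = C_k^{-1}\tilde{B}(g)\overline{v}\,C_k$ with $C_k = P_0^-\cdots P_{k-1}^-$ is correct and gives the conjugation formula $g = (\tilde{N}(g)C_k)P_k(g)\overline{u}(\tilde{N}(g)C_k)^{-1}$ cleanly; this is equivalent to (and arguably tidier than) the paper's route, which instead verifies $g = N_k(g)P_k(g)\overline{u}N_k(g)^{-1}$ via an auxiliary identity relating $\tilde N(P_{k-1}(g)\overline{u})$ and $\tilde N(P_k(g)\overline{u})$.

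The genuine gap is in your stabilization argument. The inductive claim ``$P_k^-(g)$ has root support in $-\nu^k(u\,|\,v)$'' is false already at $k=0$. In $\SL_3$ with $y=vu^{-1}=s_1s_2$, one has $\nu^0(u\,|\,v)=\{\alpha_2,\alpha_1+\alpha_2\}$, yet a direct Gauss elimination of $b\,\overline{y}$ for generic $b\in\mathcal{B}_+$ produces a nonzero $-\alpha_1$ component in $[b\overline{y}]_-$; hence $[P_0(g)]_-$ does not lie in the subgroup supported on $\{-\alpha_2,-\alpha_1-\alpha_2\}$. Your heuristic that ``$\Adj$ is precisely the closure needed to absorb the commutator roots'' is also misplaced: the Chevalley commutators arising in the Gauss decomposition of $P_k^{\oplus}\cdot(\overline{u}P_k^-\overline{u}^{-1})$ occur \emph{after} conjugation by $u$, so the closure they would require is $\Adj$ applied to $u(\cdot)\cap\Pi_+$, whereas in the recursion $\nu^{k+1}=u(\Adj\nu^k)\cap\Pi_+$ the operator $\Adj$ is applied \emph{before} $u$. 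These two operations are not comparable in general.

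The paper sidesteps this by tracking membership of $P_k(g)$ in a descending chain of standard \emph{parabolic} subgroups $\mathcal P_k$ with Levi simple roots $\Delta^k:=\Delta\cap\Adj\nu^k(u\,|\,v)$. The key point is that $[P_{k-1}(g)]_-$ then lies in the negative unipotent of the Levi and hence is a product of \emph{simple} root factors $y_i(t)$ with $\alpha_i\in\Delta^{k-1}\subset\Adj\nu^{k-1}$. Conjugating a single such factor by $\overline{u}$ gives $x_{-u(\alpha_i)}$; if $u(\alpha_i)>0$ then $u(\alpha_i)\in u(\Adj\nu^{k-1})\cap\Pi_+=\nu^k$, so its simple support lies in $\Delta^k$ and $x_{-u(\alpha_i)}\in\mathcal P_k$. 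Since $\mathcal P_k$ is a group, the whole product together with $[P_{k-1}(g)]_\oplus\in\mathcal B_+\subset\mathcal P_k$ stays in $\mathcal P_k$, and no separate commutator bookkeeping is needed. It is this reduction to simple-root factors that makes the placement of $\Adj$ in the recursion exactly right. If you replace your support set $-\nu^k$ by the full negative Levi root set of $\mathcal P_k$, your scheme goes through.
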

It was shown in \cite[Lemma 8.3]{double} that the longest Weyl group element $w_0$ is a solution of Problem~\ref{problem}. More precisely, let us set
\begin{align}
    &N_0(g) = [g\overline{w_0}^{-1}]_-;\label{eq:w0n0} \\
    &B_0(g) = [g\overline{w_0}^{-1}]_{\oplus} \overline{w_0} ([g\overline{w_0}^{-1}]_-)\overline{w_0}^{-1}.\label{eq:w0b0}
\end{align}
It is straightforward to verify that for a generic $g \in G$,
\begin{equation}
    g = N_0(g) B_0(g) \overline{w_0} N_0(g)^{-1}.
\end{equation}
One direction of Theorem~\ref{thm1} is thus obtained as a corollary of Proposition~\ref{p:relative_existence} by setting $v:= w_0$. 

To test whether an element $u \in W$ is not a solution of Problem~\ref{problem}, we have the following methods: 1) The contrapositive to Proposition~\ref{p:relative_existence} (see Corollary~\ref{c:contrapositive}; a more practical statement is Corollary~\ref{c:negative_notsolution}); 2) Determining whether the generic fiber of the restriction $B_0|_{\mathcal{B}_+\overline{u}} : \mathcal{B}_+\overline{u} \rightarrow \mathcal{B}_+$ is of cardinality $> 1$ (see Lemma~\ref{l:fibers}). The statements needed for proving the partial converse to Theorem~\ref{thm2} can be summarized as follows.

\begin{proposition}\label{p:simple_nonexist}
    Let $u \in W$ be any element and $\alpha \in \Delta$ be a simple root.
    \begin{enumerate}[1)]
        \item If $u$ is not a solution of Problem~\ref{problem} and $u(\alpha) \in \Pi_-$, then $s_{\alpha}u$ is not a solution of Problem~\ref{problem}.\label{i:simple_nonexist1}
        \item If $\Gamma(u)$ has a cycle consisting of pairwise orthogonal simple roots, then $u$ is not a solution of Problem~\ref{problem}. In particular, if $u$ fixes a simple root, it is not a solution.\label{i:simple_nonexist2}
        \item Assume there exist $\alpha_i,\alpha_j \in \Delta$ such that $a_{ij} = a_{ji} = -1$ (Cartan integers), with  $\alpha_i,\alpha_i+\alpha_j \in \nu^0(u)$, and $\Gamma(u)$ contains a $2$-cycle on $\alpha_i$ and $\alpha_i+\alpha_j$. Then $u$ is not a solution of Problem~\ref{problem}.\label{i:simple_nonexist3}
    \end{enumerate}
\end{proposition}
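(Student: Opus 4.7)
I plan to prove the contrapositive: if $s_\alpha u$ is a solution, then so is $u$. The tool is Proposition~\ref{p:relative_existence} applied with $v := s_\alpha u$, so that $uv^{-1} = s_\alpha$. Because a simple reflection negates only its own simple root, $\nu^0(u \mid v) = s_\alpha(\Pi_-) \cap \Pi_+ = \{\alpha\}$. Since $\alpha$ is simple, $\Adj\{\alpha\} = \{\alpha\}$, and the hypothesis $u(\alpha) \in \Pi_-$ then gives $\nu^1(u \mid v) = u(\{\alpha\}) \cap \Pi_+ = \emptyset$. Hence $\nu(u \mid v) = \emptyset$ and Proposition~\ref{p:relative_existence} delivers the decomposition for $u$.

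\paragraph{Parts 2 and 3.} For both items I plan to invoke Lemma~\ref{l:fibers}: the generic fiber of $B_0|_{\mathcal{B}_+ \overline{u}}$ over $B_0(B_1 \overline{u})$ is in bijection with $\{m \in \mathcal{N}_- : m B_1 u(m^{-1}) \in \mathcal{B}_+\}$ via $m \leftrightarrow B_2 = m B_1 u(m^{-1})$, an identification obtained from a direct Gaussian-decomposition computation on $B_i \overline{uw_0^{-1}}$. Since $m = 1$ always lies in this set, it suffices to exhibit a non-trivial $m$ for generic $B_1$. For Part~2, the cycle $\alpha_1 \to \cdots \to \alpha_k \to \alpha_1$ together with the simplicity of each $\alpha_i$ sharpens the edge conditions $u^{-1}(\alpha_i) \leq \alpha_{i+1}$ to $u(\alpha_{i+1}) = \alpha_i$, so $u$ cyclically permutes the $\alpha_i$. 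Pairwise orthogonality makes $U := \prod_i U_{-\alpha_i}$ abelian and $u$ acts on $U$ as a cyclic shift with signs; after choosing $\overline{u}$ so that the signs multiply to $+1$, the $u$-fixed subgroup $U^u$ is one-parameter and the condition collapses to $m B_1 m^{-1} \in \mathcal{B}_+$. A non-trivial $m$ is then produced via the $\SL_2^{(\alpha_i)}$-Gaussian identity governing the $-\alpha_i$-component of $e_{-\alpha_i}(t)\, e_{\alpha_i}(z_i)\, e_{-\alpha_i}(-t)$, combined with the Cartan commutation $e_{-\alpha_i}(t)\, h\, e_{-\alpha_i}(-t) = h \cdot e_{-\alpha_i}((\alpha_i(h)-1) t)$, aggregated over the commuting factors; the ``in particular'' fixed-simple-root case is $k = 1$. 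For Part~3, the hypotheses $a_{ij} = a_{ji} = -1$ and the 2-cycle in $\Gamma(u)$ force $u(\alpha_i) = \alpha_i + \alpha_j$, $u(\alpha_i+\alpha_j) = \alpha_i$, and $u(\alpha_j) = -\alpha_j$; since $-2\alpha_i - \alpha_j$ is not a root (a consequence of $a_{ij} = -1$), the subgroups $U_{-\alpha_i}$ and $U_{-(\alpha_i+\alpha_j)}$ commute to form a $2$-dimensional abelian subgroup which $u$ swaps, and the same $U^u$-based construction produces the required $m$.

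\paragraph{Main obstacle.} The difficulty lies in Parts~2 and~3. Because $\dim(\mathcal{N}_- \times \mathcal{B}_+) = \dim G$, the map $(M, B) \mapsto M B \overline{u} M^{-1}$ has $0$-dimensional generic fibers, so no second preimage is visible infinitesimally at $(1, B)$; it must be constructed globally. This is precisely where the explicit $\SL_2^{(\alpha)}$-calculations enter, together with the root-subgroup commutation relations guaranteed by orthogonality in Part~2 and by the Cartan-integer condition $a_{ij} = -1$ in Part~3.
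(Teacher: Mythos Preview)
Your Part~1 is correct and coincides with the paper's argument (Corollary~\ref{c:negative_notsolution}), and your fibre identification in Parts~2--3 is exactly Lemma~\ref{l:fibers}. The overall strategy therefore matches the paper. The ``in particular'' clause of Part~2 (a fixed simple root, i.e.\ cycle length $k=1$) also goes through as you describe and agrees with Proposition~\ref{p:fixer_notsolution}.

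The gap is the passage to the $u$-fixed subgroup $U^u$ for cycle length $k\geq 2$ and for Part~3. Once you impose $u(m)=m$, the condition becomes $m B_1 m^{-1}\in\mathcal{B}_+$, which no longer sees $u$; and because the $\alpha_{i_j}$ are pairwise orthogonal, this condition decouples into $k$ independent $\SL_2$-equations. Each is quadratic in the single parameter of $U^u$, with one root at $0$ and the other a function of $(a^{\alpha_{i_j}},t_j)$ alone. For generic $B_1$ these $k$ nontrivial roots are distinct, so the only common solution is $m=1$. The same overdetermination occurs in Part~3: your one-parameter $U^u\subset U_{-\alpha_i}U_{-(\alpha_i+\alpha_j)}$ must annihilate the $-\alpha_i$, $-(\alpha_i+\alpha_j)$ and $-\alpha_j$ components of $m B_1 m^{-1}$, which generically forces $m=1$.

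The paper avoids this by \emph{not} restricting to $U^u$. For Part~2 it takes $n=\prod_j y_{i_j}(c_j)$ with all $k$ parameters free; the conjugation $\overline{u}\,n\,\overline{u}^{-1}=\prod_j y_{i_{j-1}}(g_j c_j)$ shifts the index, so the $j$-th $\SL_2$-equation couples $c_j$ to $c_{j+1}$ cyclically rather than decoupling, and the resulting cyclic system admits an explicit nontrivial solution (Proposition~\ref{p:orthosimpcycle}). For Part~3 it uses the full three-parameter ansatz $n=y_i(c_1)y_j(c_2)y_i(c_3)$ inside the lower unipotent of the $A_2$ spanned by $\alpha_i,\alpha_j$ and solves in $\SL_3$; the answer involves the square root of a discriminant in the entries of $B_1$ (Proposition~\ref{p:sAnonex}).
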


The next results concern general properties of the maps $B$ and $N$. For a given $u \in W$ and its representative $\overline{u}$, define a rational map $\mathcal{T}_{\overline{u}} : \mathcal{N}_- \times \mathcal{B}_+ \rightarrow G$ via
\begin{equation}\label{eq:nbrat}
        \mathcal{T}_{\overline{u}}(n,b) = nb\overline{u}n^{-1}, \ \ n \in \mathcal{N}_-, \ b \in \mathcal{B}_+,
    \end{equation}
\begin{proposition}\label{p:nbrat}
    An element $u \in W$ is a solution of Problem~\ref{problem} if and only if the regular map $\mathcal{T}_{\overline{u}}$ is birational.
\end{proposition}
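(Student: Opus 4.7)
The proposition amounts to a geometric reformulation of Problem~\ref{problem}: the maps $N$ and $B$ are exactly the components of a rational inverse to $\mathcal{T}_{\overline{u}}$. My plan is to make this correspondence explicit. As a preliminary observation, both $G$ and $\mathcal{N}_- \times \mathcal{B}_+$ are irreducible varieties of the same dimension: indeed, using the internal semidirect product $\mathcal{B}_+ = \mathcal{H} \mathcal{N}_+$, we have
\begin{equation*}
    \dim(\mathcal{N}_- \times \mathcal{B}_+) = \dim \mathcal{N}_- + \dim \mathcal{H} + \dim \mathcal{N}_+ = \dim G.
\end{equation*}

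For the forward direction, assume $u$ is a solution of Problem~\ref{problem} with witnessing rational maps $N$ and $B$, and define $s : G \dashrightarrow \mathcal{N}_- \times \mathcal{B}_+$ by $s(g) := (N(g), B(g))$. The decomposition~\eqref{eq:problem_dec} says precisely that $\mathcal{T}_{\overline{u}} \circ s = \mathrm{id}_G$ as rational maps; in particular, $s$ is injective on a dense open subset of $G$ where both $s$ and $\mathcal{T}_{\overline{u}} \circ s$ are defined. By the dimension equality, the closure of the image of $s$ then exhausts $\mathcal{N}_- \times \mathcal{B}_+$, so $s$ is a dominant generically injective rational map between irreducible varieties of equal dimension, hence birational. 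Therefore $\mathcal{T}_{\overline{u}} = s^{-1}$ is birational as well.

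For the converse, suppose $\mathcal{T}_{\overline{u}}$ is birational. Let $\mathcal{T}_{\overline{u}}^{-1} : G \dashrightarrow \mathcal{N}_- \times \mathcal{B}_+$ denote its rational inverse and set $N := \mathrm{pr}_1 \circ \mathcal{T}_{\overline{u}}^{-1}$ and $B := \mathrm{pr}_2 \circ \mathcal{T}_{\overline{u}}^{-1}$, where $\mathrm{pr}_1$, $\mathrm{pr}_2$ are the projections of $\mathcal{N}_- \times \mathcal{B}_+$ onto its factors. For generic $g \in G$, the identity $\mathcal{T}_{\overline{u}}(\mathcal{T}_{\overline{u}}^{-1}(g)) = g$ reads precisely as $g = N(g) B(g) \overline{u} N(g)^{-1}$, so $u$ is a solution of Problem~\ref{problem}. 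I do not anticipate any significant obstacle; the only mildly subtle point is the dimension-based argument upgrading ``rational section'' to ``birational inverse'' in the forward direction, which relies on the equality of dimensions together with the irreducibility of the two varieties.
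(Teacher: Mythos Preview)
Your proof is correct and follows essentially the same approach as the paper: both establish that $(N,B)$ furnishes a rational section of $\mathcal{T}_{\overline{u}}$, then use the equality of dimensions to upgrade this to a birational inverse. The paper packages the ``section $\Rightarrow$ birational'' step into a separate lemma (phrased for projective varieties and invoking a result from Harris, after noting both sides are rational varieties), whereas you argue it directly from generic injectivity plus the dimension count; the converse direction, which you spell out, is left implicit in the paper.
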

\begin{proposition}\label{p:general}
    Let $u \in W$ be a solution of Problem~\ref{problem}. Then the corresponding rational maps $B : G \dashrightarrow \mathcal{B}_+$ and $N : G \dashrightarrow \mathcal{N}_-$ satisfy the following properties:
    \begin{enumerate}[1)]
        \item Both maps $B$ and $N$ are dominant and unique;
        \item For a generic $g \in G$ and $n \in \mathcal{N}_-$,
        \begin{align}
            &B(ngn^{-1}) = B(g);\label{eq:binvar} \\
            &N(ngn^{-1}) = nN(g).
        \end{align}
        \item The rational map $N$ is determined by $B$ via the equation
        \begin{equation}\label{eq:nfromb}
            N(g) = N_0(g) N_0(B(g)\overline{u})^{-1} = [g\overline{w_0}^{-1}]_- ([B(g)\overline{u}\,\overline{w_0}^{-1}]_-)^{-1}.
        \end{equation}
        for any representative $\overline{w_0}$ of $w_0$.
    \end{enumerate}
\end{proposition}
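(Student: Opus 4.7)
The plan is to leverage Proposition~\ref{p:nbrat}, which identifies $(N,B)$ with the birational inverse of $\mathcal{T}_{\overline{u}}\colon\mathcal{N}_-\times\mathcal{B}_+\to G$; all three items then follow by direct bookkeeping from this birationality and the uniqueness of the decomposition it provides.

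For item~1, once $\mathcal{T}_{\overline{u}}$ is birational, the rational map $(N,B)=\mathcal{T}_{\overline{u}}^{-1}$ is defined on a dense open of $G$ and has dense image in $\mathcal{N}_-\times\mathcal{B}_+$. Post-composing with either coordinate projection (a surjective morphism) then shows that $N$ and $B$ are individually dominant. Uniqueness is just injectivity of the rational inverse: any second pair $(N',B')$ satisfying $\mathcal{T}_{\overline{u}}\circ(N',B')=\id$ on a dense open must coincide with $(N,B)$.

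For item~2, fix $n\in\mathcal{N}_-$ and a generic $g$; conjugating~\eqref{eq:thm1dec} by $n$ gives
\begin{equation*}
ngn^{-1}=(nN(g))\,B(g)\,\overline{u}\,(nN(g))^{-1},
\end{equation*}
which is itself a decomposition of $ngn^{-1}$ of the form required by Problem~\ref{problem}, since $nN(g)\in\mathcal{N}_-$. The uniqueness from item~1 then forces $N(ngn^{-1})=nN(g)$ and $B(ngn^{-1})=B(g)$.

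For item~3, I would substitute the $w_0$-decomposition of $B(g)\overline{u}=N(g)^{-1}gN(g)$ into~\eqref{eq:thm1dec}. This element lies in the $w_0$-generic locus for generic $g$, because that locus is $\mathcal{N}_-$-conjugation invariant (a short check using $\overline{w_0}\mathcal{N}_-\overline{w_0}^{-1}=\mathcal{N}_+$). The substitution produces
\begin{equation*}
g=N(g)\,N_0(B(g)\overline{u})\,B_0(B(g)\overline{u})\,\overline{w_0}\,\bigl[N(g)\,N_0(B(g)\overline{u})\bigr]^{-1},
\end{equation*}
which is another $w_0$-decomposition of $g$. Comparing with $g=N_0(g)\,B_0(g)\,\overline{w_0}\,N_0(g)^{-1}$ and invoking the uniqueness from item~1 applied to $v=w_0$ yields $N_0(g)=N(g)\,N_0(B(g)\overline{u})$, and rearranging gives the first equality in~\eqref{eq:nfromb}; the second equality is immediate from the definition~\eqref{eq:w0n0} of $N_0$. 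There is no substantial obstacle once Proposition~\ref{p:nbrat} is in hand; the entire argument propagates birationality and uniqueness through two compatible decompositions, and the only mildly delicate point is the conjugation-invariance of the $w_0$-generic locus used in item~3.
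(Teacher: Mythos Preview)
Your proposal is correct and follows essentially the same route as the paper: the paper also deduces all three items from Proposition~\ref{p:nbrat}, proving dominance via the coordinate projections of $\mathcal{T}_{\overline{u}}^{-1}$, the invariance by exhibiting a second rational section and invoking birationality, and the formula for $N$ by substituting the $w_0$-decomposition of $B(g)\overline{u}$ and comparing. Your added remark about the $\mathcal{N}_-$-conjugation invariance of the $w_0$-generic locus is a point the paper leaves implicit.
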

In particular, in the context of Proposition~\ref{p:relative_existence}, if the sequence $\{N_k : G \dashrightarrow \mathcal{N}_-\}$ given by~\eqref{eq:nk} stabilizes, its limit is given by~\eqref{eq:nfromb}.

\subsection{Combinatorics of rational Weyl group elements}\label{s:main_results2}
In this subsection, we state the second part of our main results, which concern combinatorial properties of rational Weyl group elements. The proofs are collected in Section~\ref{s:proofs2}. Here, we assume that the root system $\Pi$ is indecomposable.

\begin{definition}
    The \emph{rationality graph} $\Gamma(W)$ of $W$ is the graph constructed as follows:
    \begin{itemize}
        \item The vertices of $\Gamma(W)$ are the rational Weyl group elements;
        \item Two vertices $u$ and $v$ are connected by an edge if and only if there exists a simple reflection $s_{\alpha}$, $\alpha \in \Delta$, such that $u = s_{\alpha}v$.
    \end{itemize}
\end{definition}
If we know that $W$ is of some specific Lie type (say, $A_r$), then we write $\Gamma(A_r)$ instead of $\Gamma(W)$. Some examples of rationality graphs are given in Figure~\ref{f:ratgraphs}.

\begin{theorem}\label{thm2}
    The rationality graph $\Gamma(W)$ is connected and carries a $\mathbb{Z}_2$-symmetry centered at the vertex $w_0$. Moreover, the graph $\Gamma(W)$ has more than one vertex if and only if $w_0 \neq -\id$.
\end{theorem}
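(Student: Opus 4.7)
The plan is to establish in turn (a) the $\mathbb{Z}_2$-symmetry, (b) the cardinality criterion $|\Gamma(W)|>1 \iff w_0 \neq -\id$, and (c) the connectedness of $\Gamma(W)$.

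For (a), I take the involution $\phi : W \to W$ defined by $\phi(u) := w_0 u w_0$. Since $w_0^2 = e$, $\phi$ is an involution with $\phi(w_0) = w_0$. The identity $w_0 s_{\alpha} w_0 = s_{\sigma(\alpha)}$, where $\sigma$ denotes the Dynkin diagram involution determined by $w_0(\alpha_i) = -\alpha_{\sigma(i)}$, ensures that if $u = s_{\alpha} v$ is an edge of $\Gamma(W)$, then so is $\phi(u) = s_{\sigma(\alpha)} \phi(v)$. For rationality preservation, I would verify that $\phi(u)$ acts on roots as $\sigma \circ u \circ \sigma$ (using $w_0 = -\sigma$ on the root lattice); since $\sigma$ preserves $\Pi_+$ and the partial order $\leq$, an induction yields $\nu^k(\phi(u)) = \sigma(\nu^k(u))$ for all $k \geq 0$, whence $\phi(u)$ is rational iff $u$ is.

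For (b), the two directions are treated separately. First assume $w_0 \neq -\id$, so $\sigma \neq \id$, and pick a simple root $\alpha$ with $\sigma(\alpha) \neq \alpha$. I claim $s_\alpha w_0$ is rational. Since $s_\alpha$ sends into $\Pi_+$ only the element $-\alpha$ of $\Pi_-$, the unique $\beta \in \Pi_+$ with $s_\alpha w_0(\beta) \in \Pi_+$ is $\sigma(\alpha)$, so $\nu^0(s_\alpha w_0) = \{\alpha\}$. As $\alpha$ is simple, $\Adj(\{\alpha\}) = \{\alpha\}$, and a direct calculation gives $s_\alpha w_0(\alpha) = -s_\alpha(\sigma(\alpha)) \in \Pi_-$, because $\sigma(\alpha)\in\Delta$ is distinct from $\alpha$ and so $s_\alpha$ preserves its positivity. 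Hence $\nu^1(s_\alpha w_0) = \emptyset$, and $s_\alpha w_0$ is a rational element distinct from $w_0$. Conversely, suppose $w_0 = -\id$ and $u \neq w_0$, and let $\theta$ denote the highest root. If $u^{-1}(\theta) \in \Pi_+$, then $\theta \in \nu^0(u)$ and Proposition~\ref{p:highrootnonrat} gives that $u$ is non-rational. Otherwise $u^{-1}(\theta) = -\theta$, i.e., $v := w_0 u$ fixes $\theta$ and therefore lies in the parabolic subgroup of $W$ generated by the simple reflections orthogonal to $\theta$; the plan is to argue by induction on the rank of the root system, reducing the claim for $u$ to the analogous claim for $v$ inside the smaller parabolic subgroup and then promoting the resulting non-rationality back to the full $W$. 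This reduction is the first main obstacle of the proof.

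For (c), the plan is induction on $\ell(w_0) - \ell(u)$: for each rational $u \neq w_0$, I seek a simple reflection $s_\alpha$ such that $s_\alpha u$ is rational and $\ell(s_\alpha u) > \ell(u)$, extending a path toward $w_0$. A natural approach is to apply Proposition~\ref{p:relative_existence} with $v := u$: the sequence $\{\nu^k(s_\alpha u \,|\, u)\}_{k \geq 0}$ admits an explicit description, and one looks for $\alpha$ making it terminate at $\emptyset$. The $\mathbb{Z}_2$-symmetry from (a) permits restricting attention to one half of $\Gamma(W)$. Proving the existence of a suitable $\alpha$ for every rational $u \neq w_0$, by combining Proposition~\ref{p:simple_nonexist} with a careful tracking of how $\Gamma(u)$ and $\{\nu^k(u)\}$ transform under left multiplication by simple reflections, is the second main obstacle.
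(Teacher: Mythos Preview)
Your treatment of (a) and of the forward direction of (b) is correct and coincides with the paper's: the involution $\phi(u)=w_0uw_0$ is exactly the Dynkin automorphism $\epsilon$ used there, and your computation that $\nu^0(s_\alpha w_0)=\{\alpha\}$ with $\nu^1=\emptyset$ when $\sigma(\alpha)\neq\alpha$ is the content of Proposition~\ref{p:len1case}.

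There is a genuine error in the backward direction of (b). Your dichotomy ``either $u^{-1}(\theta)\in\Pi_+$ or $u^{-1}(\theta)=-\theta$'' is false: $u^{-1}(\theta)$ can be any negative root. For instance, whenever $w_0=-\id$ and $\alpha\in\Delta$ satisfies $\langle\theta,\alpha^\vee\rangle>0$, the element $u=s_\alpha w_0$ has $u^{-1}(\theta)=-s_\alpha(\theta)\in\Pi_-\setminus\{-\theta\}$. So the parabolic reduction you sketch does not get off the ground; and even if it did, the operator $\Adj$ is computed in the ambient root system, so rationality does not localise to a Levi in the way you suggest. The paper avoids this entirely by reversing the order: it proves connectedness first, and then (b) backward becomes a one-line corollary --- if $w_0=-\id$ then every $s_\alpha w_0$ fixes $\alpha$ and is non-rational by Proposition~\ref{p:fixedpt}, so by connectedness $\Gamma(W)=\{w_0\}$.

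For (c) your inductive strategy (find $\alpha$ with $\ell(s_\alpha u)>\ell(u)$ and $s_\alpha u$ rational) is exactly right and matches the paper, but the tool you propose cannot work: Proposition~\ref{p:relative_existence} only shows that $s_\alpha u$ is a \emph{solution} of Problem~\ref{problem}, and the implication ``solution $\Rightarrow$ rational'' is precisely the open conjecture. The paper's argument is purely combinatorial. It first shows (Lemma~\ref{l:correct_simple}) that any rational $u\neq w_0$ admits a simple root $\alpha$ with $u^{-1}(\alpha)>0$ \emph{and} $u(\alpha)<0$ --- the second condition is essential and does not appear in your plan. It then proves (Lemma~\ref{l:rat_v_lower}) that for such $\alpha$, $s_\alpha u$ is rational, by a direct argument on rationality graphs: any putative cycle in $\Gamma(s_\alpha u)$ is translated, via $\nu^0(s_\alpha u)=s_\alpha(\nu^0(u)\setminus\{\alpha\})$ and a root-system lemma (Lemma~\ref{l:betalambda}), into a cycle in $\Gamma(u)$. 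Neither Proposition~\ref{p:relative_existence} nor Proposition~\ref{p:simple_nonexist} enters.
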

In other words, there exist rational elements $u \in W$ other than $w_0$ if and only if the Lie type of $W$ is either $A_r$ for $r \geq 2$, or $D_r$ for $r$ odd, or $E_6$. Our proof is constructive: for every rational $u \in W$, we find an explicit path to $w_0$.

For type $A_r$, define the following special Coxeter element:
\begin{equation}\label{eq:specialcoxeter}
    C:= s_1 s_2 \cdots s_r.
\end{equation}

\begin{proposition}\label{p:rat_coxeter}
    A Coxeter element $c \in W$ is rational if and only if the Lie type of $W$ is $A_r$, $r \geq 1$, and $c \in \{C,C^{-1}\}$. In this case, $c$ has valency $1$ in $\Gamma(A_r)$.
\end{proposition}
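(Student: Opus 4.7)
The plan is to handle three goals: (i) rationality of $C$ in $A_r$ (with $C^{-1}$ by symmetry); (ii) non-rationality of every other Coxeter element in any Lie type; (iii) the valency-$1$ claim for $C$ in $\Gamma(A_r)$. Throughout, the Dynkin diagram involution $\alpha_i \leftrightarrow \alpha_{r+1-i}$ on $A_r$ interchanges $C$ and $C^{-1}$ while preserving both rationality and the graph $\Gamma(W)$, so all arguments for $C$ transfer to $C^{-1}$.

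For (i), I would parametrize $\Pi_+$ by intervals $\alpha_{[i,j]} := \alpha_i + \cdots + \alpha_j$ and compute $C$'s action directly from the reduced word $s_1 s_2 \cdots s_r$: $C(\alpha_{[i,j]}) = \alpha_{[i+1, j+1]}$ for $j < r$, and $C(\alpha_{[i, r]}) = -\alpha_{[i-1, r-1]}$ for $i \geq 2$, with $C(\alpha_{[1,r]}) = -\alpha_1$. This gives $\nu^0(C) = \{\alpha_{[i,j]} : 2 \leq i \leq j \leq r\}$ and $C^{-1}(\alpha_{[i,j]}) = \alpha_{[i-1, j-1]}$ on $\nu^0(C)$. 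An edge $\alpha_{[i, j]} \to \alpha_{[i', j']}$ in $\Gamma(C)$ then amounts to $[i-1, j-1] \subseteq [i', j']$, forcing $i' \leq i-1 < i$; thus the first coordinate strictly decreases along every directed edge, and $\Gamma(C)$ is acyclic.

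For (ii), I would split into two regimes. In non-$A$ types, I claim $c^{-1}(\theta) > 0$ for every Coxeter $c$, whence $\theta \in \nu^0(c)$ and $c$ is non-rational by Proposition~\ref{p:highrootnonrat}. The mechanism is that in such types, exactly one simple root $\alpha_k$ pairs non-trivially with $\theta$; all other $s_j$ fix $\theta$, so the computation of $c^{-1}(\theta)$ reduces to applying $s_k$ once (yielding $\theta - m\alpha_k$ for some integer $m > 0$) and then applying the remaining simple reflections, which only add non-negative multiples of their own $\alpha_j$'s and cannot drive the result negative. In type $A_r$ with $c \neq C, C^{-1}$, the Coxeter corresponds to a non-linear orientation of the path diagram, which necessarily has an internal source or sink at some interior node $i$. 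The key structural lemma to establish is: if $i$ is an internal sink, then $\Gamma(c)$ contains a $2$-cycle on the composite roots $\alpha_{[i-1, i]}$ and $\alpha_{[i, i+1]}$; the source case reduces by passing to $c^{-1}$. Proving this lemma via direct computation of $c$ on the composite roots near $\alpha_i$, together with the parallel case analysis for non-$A$ types, is the main technical obstacle.

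For (iii), consider the candidate neighbors $\{s_\alpha C : \alpha \in \Delta\}$. For $s_r C$, a direct analysis of $\Gamma(s_r C)$ analogous to (i) yields an acyclic graph via a monotonic invariant, so $s_r C$ is rational. For $s_1 C = s_2 s_3 \cdots s_r$, a direct computation gives $(s_1 C)^{-1}(\theta) = \alpha_{[1, r-1]}$, which satisfies $\alpha_{[1, r-1]} \leq \theta$, producing a loop at $\theta$ in $\Gamma(s_1 C)$ by Proposition~\ref{p:fixedpt}. For $1 < i < r$, the element $s_i C$ is analyzed by direct identification of a loop or $2$-cycle via arguments analogous to (ii). Since every edge at $C$ in $\Gamma(A_r)$ arises from left-multiplication by a simple reflection, this covers all candidate neighbors and establishes valency $1$.
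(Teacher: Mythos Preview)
Your argument for (i) is correct and matches the paper's computation of the $\nu$-sequence for $C$. The valency-$1$ argument for $s_1C$ is also fine, though the paper handles all $s_iC$ with $i<r$ uniformly via the one-line observation $s_iC(\alpha_i)=s_i(\alpha_{i+1})=\alpha_i+\alpha_{i+1}\geq\alpha_i$, which immediately gives a loop by Proposition~\ref{p:fixedpt}.

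However, part (ii) contains two genuine errors.

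\textbf{Non-$A$ types.} The claim ``$c^{-1}(\theta)>0$ for every Coxeter $c$'' is false: in $B_2$ with $c=s_2s_1$ one has $c^{-1}(\theta)=s_1s_2(\alpha_1+2\alpha_2)=s_1(\alpha_1)=-\alpha_1$. The proposed mechanism (``the remaining simple reflections only add non-negative multiples of their own $\alpha_j$'') is wrong even where the conclusion happens to hold: in $D_r$, after $s_2(\theta)=e_1+e_3$, applying $s_1$ gives $e_2+e_3=e_1+e_3-\alpha_1$, which \emph{subtracts} $\alpha_1$. The paper instead dispatches all types with $w_0=-\id$ via Corollary~\ref{c:morethanonevert}, proves the $D_r$ case by a genuine case analysis on the position of $s_r$ (Lemma~\ref{l:coxelhighrootD}), and checks $E_6$ by computer.

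\textbf{Type $A_r$, $c\neq C,C^{-1}$.} Your sink lemma is false as stated. Take $r=4$ and $c=s_1s_2s_4s_3$, whose unique interior sink is at node $3$. A direct computation gives $c^{-1}(\alpha_{[2,3]})=\alpha_{[1,4]}$, so $\alpha_{[1,4]}\leq\alpha_{[3,4]}$ fails and there is no edge $\alpha_{[2,3]}\to\alpha_{[3,4]}$; your claimed $2$-cycle on $\alpha_{[2,3]},\alpha_{[3,4]}$ does not exist. (A $2$-cycle does exist, but on $\alpha_{[1,3]}$ and $\alpha_{[2,4]}$.) The paper's route is different: it first shows (Lemma~\ref{l:coxelhighroot}) that $c^{-1}(\theta)>0$ unless $c\in\{C_1^{-1},\dots,C_r^{-1}\}$, where $C_m=s_ms_{m+1}\cdots s_rs_{m-1}\cdots s_1$; this already forces $\theta\in\nu^0(c)$ for all but $r$ Coxeter elements. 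For the remaining $C_m^{-1}$ with $2\leq m\leq r-1$ it then exhibits an explicit $2$-cycle on $e_1-e_{m+1}$ and $e_m-e_{r+1}$, which is not the pair your lemma predicts.
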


As we explained in the introduction, the special Coxeter element~\eqref{eq:specialcoxeter} plays a role in the construction of a generalized cluster structure in the ring of invariants $\mathbb{C}[\SL_n]^{\Ad\mathcal{N}_-}$ (that is, regular functions invariant under the action $(n,g) \mapsto ngn^{-1}$, $n \in \mathcal{N}_-$, $g \in G$). Although Coxeter elements are not rational in type $D_r$ for $r$ odd, there is at least the following similarity:
\begin{proposition}\label{p:typeDrvalency1}
    In type $D_r$ for $r \geq 5$ odd, the rationality graph $\Gamma(D_r)$ contains at least $2$ vertices of valency $1$.
\end{proposition}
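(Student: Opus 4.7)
The plan is to exhibit an explicit rational element $u^\star \in W(D_r)$ which is not fixed by the $\mathbb{Z}_2$-symmetry of $\Gamma(D_r)$ from Theorem~\ref{thm2}, and to verify that $u^\star$ has valency one in $\Gamma(D_r)$; its image under the symmetry then supplies the second valency-one vertex. I expect the symmetry to be realized as $u \mapsto w_0 u w_0$: conjugation by $w_0$ sends simple reflections to simple reflections (since $w_0$ permutes the simple roots up to sign), hence preserves edges of $\Gamma(W)$ and fixes $w_0$. For $r$ odd, this conjugation realizes the nontrivial diagram automorphism of $D_r$, swapping $s_{r-1}$ with $s_r$ and fixing $s_1,\dots,s_{r-2}$; any $u^\star$ whose reduced expressions use $s_{r-1}$ and $s_r$ with different multiplicities is then asymmetric under the involution.

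To select a candidate $u^\star$, I would start from the fact that Coxeter elements are excluded by Proposition~\ref{p:rat_coxeter}, and search among short words in the simple reflections that are biased toward one fork leaf (say $s_{r-1}$), computing the $\nu$-sequence of Definition~\ref{d:nuseq} until a rational element is found. Evidence from the $D_5$ rationality graph in Appendix~\ref{appendix} should pin down the shape of $u^\star_r$ for small $r$, after which one looks for a family $\{u^\star_r\}_{r \text{ odd}}$ whose local structure near the fork is $r$-independent and whose tail along $s_1,\dots,s_{r-3}$ mirrors the type $A$ Coxeter. Rationality of the chosen $u^\star$ is then established by showing that its $\nu$-sequence terminates at $\emptyset$ (Definition~\ref{d:rational}).

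The main step, and the main obstacle, is verifying valency one: for every simple reflection $s_\alpha$, one must decide whether $s_\alpha u^\star$ is rational. For exactly one distinguished $\alpha$, one exhibits a terminating $\nu$-sequence for $s_\alpha u^\star$; for each other $\alpha$, one must exhibit a cycle in $\Gamma(s_\alpha u^\star)$. The available tools are Proposition~\ref{p:fixedpt} (a loop is present iff some positive root is mapped below itself), Proposition~\ref{p:simple_nonexist}.\ref{i:simple_nonexist2} (a cycle on pairwise orthogonal simple roots), and Proposition~\ref{p:simple_nonexist}.\ref{i:simple_nonexist3} (a two-cycle on an adjacent pair $\alpha_i,\alpha_i+\alpha_j$). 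These obstructions are readable off the images $s_\alpha u^\star(\beta)$ for $\beta$ simple, so the verification is mechanical once $u^\star$ is specified. The difficulty is carrying it out uniformly in $r$: although the linear-tail portion of the computation essentially duplicates the type $A$ argument behind Proposition~\ref{p:rat_coxeter}, the local analysis at the $D_r$ fork, where the three neighbours of $\alpha_{r-2}$ interact with the bias of $u^\star$, is the nontrivial ingredient and I expect it to be the crux of the proof. Finally, applying the involution $u \mapsto w_0 u w_0$ to $u^\star$ yields a distinct valency-one vertex, completing the argument.
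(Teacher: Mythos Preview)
Your plan is the paper's approach: exhibit one explicit rational element, check it has valency one using the loop criterion of Proposition~\ref{p:fixedpt}, and produce the second vertex via the diagram involution $\epsilon$ (which is indeed conjugation by $w_0$, as you surmise). The paper's element $C$ is not found by searching short words but is specified by its action on the Euclidean basis: $C(e_1)=-e_r$, $C(e_i)=-e_i$ for $2\le i\le r-1$, $C(e_r)=e_1$; from this description the $\nu$-sequence terminates at $\nu^2(C)=\emptyset$, and for every $i\in\{1,\dots,r-2\}\cup\{r\}$ one has $s_iC(\alpha)\ge\alpha$ for a suitable simple $\alpha$, so Proposition~\ref{p:fixedpt} alone rules out all neighbours except $s_{r-1}C$---the fork difficulty you anticipate does not arise, and the obstructions from Proposition~\ref{p:simple_nonexist} are not needed.
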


We give an explicit construction of the two elements of valency $1$ in type $D_r$ (see Proposition~\ref{p:specialDr}). In type $E_6$, however, there are no rational elements of valency $1$; instead, there are four elements of valency $2$. We expect that both Proposition~\ref{p:rat_coxeter} and Proposition~\ref{p:typeDrvalency1} can be strengthened: the rationality graphs $\Gamma(A_r)$, $r \geq 2$, and $\Gamma(D_r)$ for $r$ odd have exactly $2$ vertices of valency $1$.

If our main conjecture holds, the neighborhood of a given vertex $u \in W$ in $\Gamma(W)$ consists of those $s_{\alpha}u$ for which $\nu(s_{\alpha}u\,|\,u) = \emptyset$. In other words, if $u$ is rational, then $s_{\alpha}u$ is rational if and only if $\nu(s_{\alpha}u\,|\,u) = \emptyset$. 

In Tables~\ref{t:countA} and \ref{t:countDE}, we present the numbers of rational Weyl group elements in types with nontrivial $w_0$. We do not know if there is a closed formula for type $A$; in type $D_r$ for $r$ odd, we conjecture that the number of rational Weyl group elements is equal to $2^r -1$.

\begin{table}[ht]
\centering
\begin{tabular}{|l|l|l|l|l|l|l|l|l|l|l|l|}
\hline
Type  & $A_1$ & $A_2$ & $A_3$ & $A_4$ & $A_5$ & $A_6$  & $A_7$   & $A_8$    & $A_9$  & $A_{10}$ & $A_{11}$ \\ \hline
Order of $W$ & $2$   & $6$   & $24$  & $120$ & $720$ & $5\,040$ & $40\,320$ & $362\,880$ & $10!$ & $11!$ & $12!$\\ \hline
Rational $u$  & $1$   & $3$   & $7$   & $25$  & $89$  & $379$  & $1\,679$  & $8\,289$   & $42\,561$ & $236\,099$ & $1\,357\,143$ \\ \hline
\end{tabular}
\caption{Count of rational Weyl group elements in type $A_{r}$ up to $r = 11$.}
\label{t:countA}
\end{table}


\begin{table}[ht]
\centering
\begin{tabular}{|l|l|l|l|l|l|l|l|l|}
\cline{1-7} \cline{9-9}
Type         & $D_5$    & $D_7$      & $D_9$          & $D_{11}$          & $D_{13}$          & $D_{15}$          & \,\, & $E_6$     \\ \cline{1-7} \cline{9-9} 
Order of $W$ & $1\,920$ & $322\,560$ & $9!\cdot 2^8$ & $11!\cdot 2^{10}$ & $13!\cdot 2^{12}$ & $15!\cdot 2^{14}$ & \,\, & $51\,840$ \\ \cline{1-7} \cline{9-9} 
Rational $u$ & $31$     & $127$      & $511$          & $2\,047$          & $8\,191$          & $32\,767$               & \,\, & $397$     \\ \cline{1-7} \cline{9-9} 
\end{tabular}
\caption{Count of rational Weyl group elements in types $D_r$, for $r \leq 15$ odd and $E_6$.}
\label{t:countDE}
\end{table}

\begin{remark}
    In the definition of $\Gamma(W)$, we join two rational Weyl group elements $u,v \in W$ iff there is a simple reflection $s$ such that $u = sv$. Alternatively, one could join two elements by an edge iff $u = vs$. However, the resulting graph is not connected unless it contains only $1$ vertex or the Lie type is $A_1$ or $A_2$.
\end{remark}

\begin{remark}
    It is interesting to see whether the notion of a rational Weyl group element can be extended to the non-crystallographic root systems
    of types $I_2(m)$ ($m\notin \{3,4,6\}$), $H_3$ and $H_4$. For type $I_2(m)$, we generate a root system from the generalized Cartan matrix of the form\footnote{This choice of the Cartan matrix for $I_2(m)$ includes the cases of $C_2$ and $G_2$ types; however, if one generates a root system of type $I_2(m)$ from a symmetric Cartan matrix, the results are not as nice (for the symmetric, for instance, $\Gamma(I_2(6))$ has $3$ vertices, which is misaligned with the result for $\Gamma(G_2)$, which has $1$ vertex; nevertheless, the results for $I_2(5)$ and $H_2$ agree whether the matrix is symmetric or not).} 
    \begin{equation*}
        \begin{bmatrix}
        2 & -4\cos^2(\pi/m)\\ -1 & 2
    \end{bmatrix}
    \end{equation*}
    For types $H_3$ and $H_4$, we choose a symmetric generalized Cartan matrix as in \cite[Section 2.13]{humphreys_r}. The only issue for testing the theory is the choice of a poset. If one defines a poset on the set of roots as
    \begin{equation*}
        \alpha \leq \beta \ \ \Leftrightarrow \ \ \beta-\alpha \in \mathbb{R}_{\geq 0}\Delta,
    \end{equation*}
    then the results are as follows. The rationality graph $\Gamma(I_2(m))$ is connected, and it contains $1$ vertex if $m$ is even and $3$ vertices if $m$ is odd; the graph $\Gamma(H_3)$ is not connected and contains $19$ vertices; the graph $\Gamma(H_4)$ is also not connected and contains $12\,440$ vertices. Another poset for $I_2(m)$ and $H_3$ was suggested by D. Armstrong in \cite[Section 5.4.1]{armstrong}. We verified that if one induces the Armstrong's poset in $H_3$ from $D_6$, then $\Gamma(H_3)$ contains $4$ vertices; however, the graph is not connected.
\end{remark}


 \section{Construction of solutions}\label{s:proofs1}
 In this section, we provide proofs of the results from Section~\ref{s:main_results1}.

\subsection{\texorpdfstring{Rationality graph $\Gamma(u)$ and $\nu$-sequence}{Rationality graph Gamma(u) and nu-sequence}}\label{s:nuseq}
In this subsection, for a given $u \in W$, we study properties of the $\nu$-sequence of $u$ (see Definition~\ref{d:nuseq}),  properties of its rationality graph $\Gamma(u)$ (see Definition~\ref{d:gammau}), and the relation between $\Gamma(u)$ and the $\nu$-sequence. We also prove the equivalence of conditions~\ref{i:rational1} and~\ref{i:rational2} of Definition~\ref{d:rational} (see Proposition~\ref{p:nu_nonrat}).

\begin{lemma}\label{l:nuk}
    For any $k \geq 0$, a root $\alpha \in \nu^0(u)$ belongs to $\nu^k(u)$ if and only if there is a path in $\Gamma(u)$ of length $k$ that starts at $\alpha$.
\end{lemma}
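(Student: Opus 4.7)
The plan is to proceed by induction on $k$. The base case $k = 0$ is immediate: a path of length $0$ in $\Gamma(u)$ starting at $\alpha$ is just the vertex $\alpha$ itself, which exists in $\Gamma(u)$ precisely when $\alpha \in \nu^0(u)$, matching the lemma's claim tautologically.

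For the inductive step, I would unfold the recursive definition $\nu^k(u) = u(\Adj \nu^{k-1}(u)) \cap \Pi_+$. A root $\alpha$ belongs to $\nu^k(u)$ if and only if $\alpha \in \Pi_+$, $u^{-1}(\alpha) \in \Pi_+$, and there exists $\beta \in \nu^{k-1}(u)$ with $u^{-1}(\alpha) \leq \beta$. The first two conditions together say that $\alpha \in u(\Pi_+) \cap \Pi_+ = \nu^0(u)$, i.e.\ $\alpha$ is a vertex of $\Gamma(u)$; the third condition is precisely the existence of an edge $\alpha \to \beta$ in $\Gamma(u)$ terminating at $\beta \in \nu^{k-1}(u)$. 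By the inductive hypothesis applied to $\beta$, such a $\beta$ exists if and only if there is a path of length $k-1$ in $\Gamma(u)$ starting at $\beta$. Concatenating the edge $\alpha \to \beta$ with this length-$(k-1)$ path produces a length-$k$ path starting at $\alpha$, and the reverse direction is obtained by decomposing any length-$k$ path out of $\alpha$ into its first edge and its tail.

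The only minor subtlety requiring attention is that edges are only declared between vertices of $\Gamma(u)$, so one must verify that every $\beta \in \nu^{k-1}(u)$ is actually a vertex, i.e.\ lies in $\nu^0(u)$. But this is automatic: by construction $\nu^{k-1}(u) \subseteq u(\Pi_+) \cap \Pi_+ = \nu^0(u)$, since $\Adj(A) \subseteq \Pi_+$ for any $A$, so the inductive hypothesis is legitimately applicable to $\beta$. I do not anticipate any genuine obstacle here; the lemma is essentially a dictionary between the recursive set-theoretic definition of $\{\nu^k(u)\}$ and the path combinatorics of $\Gamma(u)$, and the proof is a careful unpacking of the two definitions side by side.
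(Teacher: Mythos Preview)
Your proposal is correct and follows essentially the same induction argument as the paper's proof: both unfold the recursion $\nu^k(u) = u(\Adj \nu^{k-1}(u)) \cap \Pi_+$ to produce an edge $\alpha \to \beta$ with $\beta \in \nu^{k-1}(u)$, and then invoke the inductive hypothesis on $\beta$. Your version is in fact slightly more explicit than the paper's (you spell out the converse direction and the check that $\nu^{k-1}(u) \subseteq \nu^0(u)$), but there is no substantive difference in method.
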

Note that we allow loops in $\Gamma(u)$, so a path can traverse the same vertex multiple times.
\begin{proof}
Let us prove the statement by induction. For $k=0$, the vertex set of $\Gamma(u)$ is the same as the set of paths of length $0$, and it is equal to $\nu^0(u)$. Assume that the statement holds for $k-1$, $k \geq 1$. By equation~\eqref{eq:nuseq}, a root $\alpha \in \nu^0(u)$ belongs to $\nu^k(u)$ if and only if there is a root $\beta \in \nu^{k-1}(u)$ such that $u^{-1}(\alpha) \leq \beta$; that is, there is an arrow $\alpha \rightarrow \beta$ in the graph $\Gamma(u)$. But then, there is a path of length $k-1$ that starts at $\beta$; thus, there is a path of length $k$ that starts at $\alpha$.
\end{proof}

\begin{proposition}\label{p:nu_descends}
    The sequence $\{\nu^k(u)\}_{k \geq 0}$ is descending and stabilizes after a finite number of terms.
\end{proposition}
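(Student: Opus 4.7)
The plan is to prove the two claims in sequence: first that $\nu^k(u) \subseteq \nu^{k-1}(u)$ for every $k \geq 1$, and then to deduce stabilization from the fact that $\Pi_+$ is finite, so any weakly descending chain of subsets of $\Pi_+$ must be eventually constant. The second claim is automatic once the first is in hand, so the content is really in the descending property.

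For the descending property, I would proceed by induction on $k$. For the base case $k=1$, observe that $\Adj(\nu^0(u)) \subseteq \Pi_+$ by the very definition of $\Adj$, so $u(\Adj \nu^0(u)) \subseteq u(\Pi_+)$, and intersecting with $\Pi_+$ yields
\begin{equation}
\nu^1(u) = u(\Adj \nu^0(u)) \cap \Pi_+ \subseteq u(\Pi_+) \cap \Pi_+ = \nu^0(u).
\end{equation}
For the inductive step, assume $\nu^{k-1}(u) \subseteq \nu^{k-2}(u)$. The operator $\Adj$ is manifestly monotone in its input (if $A \subseteq B$, then any root bounded above by something in $A$ is bounded above by something in $B$), hence $\Adj \nu^{k-1}(u) \subseteq \Adj \nu^{k-2}(u)$. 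Applying $u$ and intersecting with $\Pi_+$ preserves inclusion, so $\nu^k(u) \subseteq \nu^{k-1}(u)$, completing the induction.

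Finally, since each $\nu^k(u)$ is a subset of the finite set $\Pi_+$, the weakly descending chain $\nu^0(u) \supseteq \nu^1(u) \supseteq \nu^2(u) \supseteq \cdots$ must stabilize after finitely many steps, which is precisely the stabilization claim and validates the definition of $\nu(u)$. There is no real obstacle here; the result is essentially a bookkeeping consequence of the monotonicity of $\Adj$ and of the image–intersection operation $A \mapsto u(A) \cap \Pi_+$, together with the finiteness of $\Pi_+$.
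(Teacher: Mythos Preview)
Your proof is correct and more direct than the paper's. The paper proves this proposition by first establishing a graph-theoretic characterization (Lemma~\ref{l:nuk}): a root $\alpha$ lies in $\nu^k(u)$ if and only if there is a path of length $k$ in $\Gamma(u)$ starting at $\alpha$. From this, the descending property is immediate since any path of length $k$ contains a path of length $k-1$ with the same starting point. Your argument bypasses this lemma entirely, appealing only to the monotonicity of $\Adj$ and of $A \mapsto u(A)\cap \Pi_+$ together with the base case $\Adj(\nu^0(u)) \subseteq \Pi_+$. This is more self-contained and arguably the ``natural'' proof of the statement in isolation; the paper's route has the advantage that Lemma~\ref{l:nuk} is reused immediately afterwards to prove Proposition~\ref{p:nu_nonrat} (the equivalence of the two rationality conditions), so it is not extra work in context.
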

\begin{proof}
By Lemma~\ref{l:nuk}, a root $\alpha \in \nu^0(u)$ belongs to $\nu^k(u)$ if and only if there is a path of length $k$ that starts at $\alpha$; hence, if $k \geq 1$ and $\alpha \in \nu^k(u)$, there is also a path of length $k-1$ that starts at $\alpha$, and thus $\alpha \in \nu^{k-1}(u)$. Since the elements of the sequence are subsets of a finite set, and since the sequence is descending, it stabilizes after a finite number of terms.
\end{proof}

The result of Proposition~\ref{p:nu_descends} allows us to define
\begin{equation}
    \nu(u) := \lim_{k \rightarrow \infty} \nu^k(u).
\end{equation}
As a consequence, the set $\nu(u)$ satisfies the following equation:
\begin{equation}\label{eq:nu}
    \nu(u) = u(\Adj \nu(u)) \cap \Pi_+.
\end{equation}

The following result shows that the two definitions of rationality of an element $u \in W$ are equivalent (see Definition~\ref{d:rational}).

\begin{proposition}\label{p:nu_nonrat}
A positive root $\alpha$ belongs to $\nu(u)$ if and only if there is a cycle in $\Gamma(u)$ that passes through $\alpha$. In particular, the graph $\Gamma(u)$ contains a cycle if and only if $\nu(u)$ is nonempty.
\end{proposition}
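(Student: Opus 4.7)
The plan is to apply Lemma~\ref{l:nuk}, which identifies $\nu^k(u)$ with the set of roots in $\nu^0(u)$ from which a directed walk of length $k$ emanates in $\Gamma(u)$. Combined with Proposition~\ref{p:nu_descends}, this reduces $\alpha\in\nu(u)$ to the existence of arbitrarily long walks in $\Gamma(u)$ starting at $\alpha$, so the question becomes: when does walk-existence force a cycle through $\alpha$? The finiteness of $\nu^0(u)$ together with the pigeonhole principle will do the rest.

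For the direction $(\Leftarrow)$, a cycle of length $\ell\ge 1$ through $\alpha$ can be traversed repeatedly: writing $k=q\ell+r$ with $0\le r<\ell$, going around the cycle $q$ times and then taking $r$ further steps produces a walk of length $k$ starting at $\alpha$, for every $k\ge 0$. Lemma~\ref{l:nuk} then places $\alpha$ in every $\nu^k(u)$, hence in $\nu(u)$.

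For $(\Rightarrow)$ I would use the fixed-point identity $\nu(u)=u(\Adj\nu(u))\cap\Pi_+$, which is implicit in the stabilisation of $\{\nu^k(u)\}$ and asserts graph-theoretically that every vertex of $\nu(u)$ has an out-edge to another vertex of $\nu(u)$. Starting at $\alpha$ and greedily iterating these out-edges produces an infinite walk $\alpha=\alpha_0\to\alpha_1\to\alpha_2\to\cdots$ entirely inside the finite set $\nu(u)$; pigeonhole forces $\alpha_i=\alpha_j$ with $i<j$, and the subsegment $\alpha_i\to\cdots\to\alpha_j$ is a cycle in $\Gamma(u)$. The ``in particular'' statement is then immediate in both directions: a cycle exists in $\Gamma(u)$ iff some vertex lies on one iff $\nu(u)\ne\emptyset$.

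The main technical hurdle I foresee is upgrading ``cycle reachable from $\alpha$'' to ``cycle passing through $\alpha$'': unadorned pigeonhole only produces the former. I would address this by a more careful greedy construction (choosing, whenever possible, an out-edge that allows return to $\alpha$), or by an inductive argument on $|\nu(u)|$ that iteratively discards from $\nu(u)$ any vertex that cannot return to $\alpha$—showing that what remains still satisfies the sub-fixed-point inclusion $S\subseteq u(\Adj S)\cap\Pi_+$, and thus, by maximality of $\nu(u)$ among such sets, must equal $\nu(u)$ itself.
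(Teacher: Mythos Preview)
Your argument for the ``In particular'' clause is correct and essentially identical to the paper's own proof: the paper simply observes that if $\nu(u)\neq\emptyset$ then, via Lemma~\ref{l:nuk}, $\Gamma(u)$ carries walks of arbitrary length and hence, by finiteness, a cycle; and conversely a cycle yields arbitrarily long walks, so each $\nu^k(u)$ is nonempty and thus $\nu(u)\neq\emptyset$.

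Where you diverge from the paper is in attempting the sharper first sentence (membership of a \emph{specific} $\alpha$ in $\nu(u)$ versus a cycle \emph{through} $\alpha$). You are right to flag the gap between ``cycle reachable from $\alpha$'' and ``cycle passing through $\alpha$'': a bare pigeonhole argument only gives the former. Note, however, that the paper's proof does not address this sharper claim at all---it proves only the ``In particular'' equivalence, which is the sole part invoked elsewhere (for Definition~\ref{d:rational}). So you are already going beyond what the paper establishes. Your $(\Leftarrow)$ direction for the sharper claim is fine. Your two sketched fixes for $(\Rightarrow)$ are not yet arguments: the fixed-point identity $\nu(u)=u(\Adj\nu(u))\cap\Pi_+$ guarantees that every vertex of $\nu(u)$ has an \emph{out}-neighbour in $\nu(u)$, but says nothing about \emph{in}-neighbours, so nothing in your outline rules out a vertex $\alpha\in\nu(u)$ sitting at the source of a path that feeds into a cycle without ever returning to $\alpha$. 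The ``maximality'' idea would need a different invariant (and a proof that $\nu(u)$ is maximal for it) to bite.
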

\begin{proof}
    If $\nu(u)$ is nonempty, we can find paths in $\Gamma(u)$ of arbitrary length, and since $\Gamma(u)$ has a finite number of vertices and is oriented, it contains a cycle. Conversely, if $\Gamma(u)$ contains a cycle, by Lemma~\ref{l:nuk}, for any $k \geq 1$, the set $\nu^k(u)$ is nonempty, and thus $\nu(u)$ is nonempty as well.
\end{proof}
\begin{proposition}\label{p:highrootnonrat}
    Let $\theta$ be the highest root. If $\theta \in \nu^0(u)$, then $u$ is not rational.
\end{proposition}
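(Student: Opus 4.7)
The plan is to exhibit a loop in $\Gamma(u)$ at the vertex $\theta$ and invoke Proposition~\ref{p:nu_nonrat} (equivalently, the equivalence of conditions in Definition~\ref{d:rational}) to conclude that $u$ is not rational. The assumption $\theta \in \nu^0(u) = u(\Pi_+)\cap \Pi_+$ means there exists $\alpha \in \Pi_+$ with $u(\alpha) = \theta$, and in particular $u^{-1}(\theta) = \alpha \in \Pi_+$.

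The key observation is the defining property of the highest root: every positive root $\beta$ satisfies $\beta \leq \theta$. Applying this to $\beta = u^{-1}(\theta) \in \Pi_+$ gives $u^{-1}(\theta) \leq \theta$. By Definition~\ref{d:gammau}, this is precisely the condition for there to be an edge $\theta \to \theta$ in $\Gamma(u)$; that is, $\Gamma(u)$ has a loop at $\theta$.

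A loop is a cycle of length $1$, so $\Gamma(u)$ contains a cycle, and by Proposition~\ref{p:nu_nonrat} we have $\theta \in \nu(u)$, so $\nu(u) \neq \emptyset$. By Definition~\ref{d:rational}, $u$ is not rational.

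There is no real obstacle here: the statement is essentially immediate from unpacking the definitions once one recognizes that the maximality of $\theta$ turns the condition $u^{-1}(\theta) \in \Pi_+$ into the edge-condition $u^{-1}(\theta) \leq \theta$ for free. (Alternatively, one can phrase the argument as a one-line appeal to Proposition~\ref{p:fixedpt}: the element $\alpha := u^{-1}(\theta)$ satisfies $\alpha \leq \theta = u(\alpha)$, which by that proposition forces $\Gamma(u)$ to contain a loop and hence $u$ to be non-rational.)
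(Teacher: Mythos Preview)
Your proof is correct and follows exactly the same line as the paper's own argument: from $\theta\in\nu^0(u)$ one has $u^{-1}(\theta)\in\Pi_+$, and maximality of the highest root gives $u^{-1}(\theta)\le\theta$, which is a loop in $\Gamma(u)$. The paper's proof is simply the one-line version of what you wrote.
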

\begin{proof}
    Indeed, if $\theta \in \nu^0(u)$, then there is a loop in $\Gamma(u)$ at $\theta$, for $u^{-1}(\theta) \leq \theta$.
\end{proof}
\begin{proposition}\label{p:fixedpt}
    The graph $\Gamma(u)$ contains a loop if and only if there is a root $\alpha \in \Pi_+$ such that $\alpha \leq u(\alpha)$. In particular, if $u$ has a fixed point, then $\Gamma(u)$ contains a loop.
\end{proposition}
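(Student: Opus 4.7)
The claim is an equivalence about vertices of $\Gamma(u)$, so the plan is to unpack the definitions and produce each direction by a short argument. A loop at a vertex $\gamma \in \nu^0(u)$ means, by Definition~\ref{d:gammau}, exactly that $u^{-1}(\gamma) \leq \gamma$; and recall that $\gamma \in \nu^0(u) = u(\Pi_+) \cap \Pi_+$ requires both $\gamma \in \Pi_+$ and $u^{-1}(\gamma) \in \Pi_+$. So the whole statement reduces to translating back and forth between the pair $(\gamma, u^{-1}(\gamma))$ and a positive root $\alpha$ satisfying $\alpha \leq u(\alpha)$.

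For the forward direction, the plan is to take a loop at $\gamma \in \nu^0(u)$ and set $\alpha := u^{-1}(\gamma)$; then $\alpha \in \Pi_+$ (since $\gamma \in u(\Pi_+)$) and $\alpha = u^{-1}(\gamma) \leq \gamma = u(\alpha)$, which is the desired inequality. For the converse, start with $\alpha \in \Pi_+$ such that $\alpha \leq u(\alpha)$ and set $\gamma := u(\alpha)$. Clearly $\gamma \in u(\Pi_+)$; to verify $\gamma \in \Pi_+$, write $\gamma = \alpha + (\gamma - \alpha)$ and note that $\gamma - \alpha \in \mathbb{Z}_{\geq 0}\Delta$ by hypothesis while $\alpha \in \mathbb{Z}_{\geq 0}\Delta$ because $\alpha \in \Pi_+$. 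Since $\gamma$ is a root (Weyl group elements permute $\Pi$) expressible with non-negative simple-root coefficients, it lies in $\Pi_+$. Hence $\gamma \in \nu^0(u)$, and $u^{-1}(\gamma) = \alpha \leq \gamma$ supplies a loop at $\gamma$.

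For the ``in particular'' part, if $u(\beta) = \beta$ for some root $\beta$, observe that $u$ is linear so it also fixes $-\beta$; replacing $\beta$ by $-\beta$ if needed we may assume $\beta \in \Pi_+$, and then $\beta \leq \beta = u(\beta)$, so the previous part produces a loop in $\Gamma(u)$.

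I do not expect any real obstacle here: the only mild subtlety is verifying that $\gamma = u(\alpha)$ is positive in the backward direction, which is handled by the additive decomposition above. Everything else is a direct unwinding of Definition~\ref{d:gammau} together with the fact that $\leq$ is preserved under the translation $\alpha \mapsto u^{-1}(\alpha)$ on both sides of the inequality $\alpha \leq u(\alpha)$.
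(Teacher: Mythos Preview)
Your argument is correct and follows the same route as the paper's proof: both directions hinge on the substitution $\gamma = u(\alpha)$ (equivalently $\alpha = u^{-1}(\beta)$) to convert the loop condition $u^{-1}(\gamma)\leq\gamma$ into $\alpha\leq u(\alpha)$. You are in fact slightly more careful than the paper in the backward direction, where you explicitly verify $u(\alpha)\in\Pi_+$ via the non-negativity of the simple-root coefficients.
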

\begin{proof}
    Indeed, $\Gamma(u)$ has a loop if and only if there is $\beta \in \nu^0(u)$ such that $u^{-1}(\beta) \leq \beta$. For any $\beta \in \nu^0(u)$, there is $\alpha \in \Pi_+$ such that $\beta = u(\alpha)$; thus, $u^{-1}(\beta) \leq \beta$ if and only if $\alpha \leq u(\alpha)$.
\end{proof}

For the following corollary, identify the Weyl group of type $A_{n-1}$ with $S_{n}$. Although we are not going to use the next result, we find it curious enough to record here.

\begin{corollary}\label{c:cycles}
    For an element $u \in S_n$, the following statements hold:
    \begin{enumerate}[1)]
        \item If for some $k \in \{1,\ldots,n-1\}$, $u$ has the cycle $(1\,2\,\ldots\,k)$ in its cycle decomposition, then $u$ is not rational;\label{i:cycles1}
        \item If for some $k \in \{2,\ldots,n\}$, $u$ has the cycle $(k\,k+1\,\ldots\,n)$ in its cycle decomposition, then $u$ is not rational.\label{i:cycles2}
    \end{enumerate}
\end{corollary}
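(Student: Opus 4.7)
The plan is to reduce both parts of the corollary to Proposition~\ref{p:highrootnonrat}: it suffices to show that in each case the highest root $\theta \in \Pi_+$ lies in $\nu^0(u) = u(\Pi_+) \cap \Pi_+$. Under the identification of the Weyl group of type $A_{n-1}$ with $S_n$, the positive roots are $e_i - e_j$ with $i < j$, and the highest root is $\theta = e_1 - e_n$. Since $u^{-1}(\theta) = e_{u^{-1}(1)} - e_{u^{-1}(n)}$, the condition $\theta \in \nu^0(u)$ is simply the inequality $u^{-1}(1) < u^{-1}(n)$.

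For part~\ref{i:cycles1}), suppose the cycle $(1\,2\,\ldots\,k)$ appears in the cycle decomposition of $u$. Then $u^{-1}(1) = k$, and because cycles in the decomposition are disjoint, $n$ (which satisfies $n > k$ by hypothesis) lies in a different cycle, giving $u^{-1}(n) \in \{k+1,\ldots,n\}$. Thus $u^{-1}(1) = k < u^{-1}(n)$, so $\theta \in \nu^0(u)$ and $u$ is not rational.

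For part~\ref{i:cycles2}), suppose the cycle $(k\,k+1\,\ldots\,n)$ appears in the cycle decomposition of $u$. When $k \leq n-1$, this cycle gives $u^{-1}(n) = n-1$, and since $k \geq 2$ the index $1$ lies in a different cycle, hence $u^{-1}(1) \in \{1,\ldots,k-1\}$, so $u^{-1}(1) \leq k-1 \leq n-2 < n-1 = u^{-1}(n)$. The degenerate case $k = n$ (a fixed point) gives $u^{-1}(n) = n > u^{-1}(1)$ trivially. In either case $\theta \in \nu^0(u)$, and Proposition~\ref{p:highrootnonrat} finishes the argument.

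There is no genuine obstacle here: the whole statement is a short combinatorial observation that the two extreme cyclic shapes force $u^{-1}(1)$ to land among the low indices and $u^{-1}(n)$ among the high ones, producing the highest root in the image of $\Pi_+$ under $u$. The only thing to be careful about is handling the boundary values $k=1$ in part~\ref{i:cycles1}) and $k=n$ in part~\ref{i:cycles2}), where the ``cycle'' is just a fixed point; both are covered by the same inequality.
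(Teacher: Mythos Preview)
Your argument is correct and slightly different from the paper's. The paper instead invokes Proposition~\ref{p:fixedpt}: for part~\ref{i:cycles1} it takes the simple root $\alpha_k$ and observes that $u(\alpha_k)=e_1-e_{u(k+1)}=\alpha_1+\cdots+\alpha_{j-1}\geq\alpha_k$ (with $j:=u(k+1)\geq k+1$), and symmetrically uses $\alpha_{k-1}$ for part~\ref{i:cycles2}. Both routes exhibit a loop in $\Gamma(u)$; you place it at the highest root $\theta$, the paper places it at $u(\alpha_k)$ (resp.\ $u(\alpha_{k-1})$). Your version has the virtue of reducing everything to the single inequality $u^{-1}(1)<u^{-1}(n)$, while the paper's choice pinpoints a low-height root in $\nu^0(u)$ tied to the boundary of the given cycle.
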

\begin{proof}
    \begin{enumerate}[1)]
        \item Assume $u$ has the cycle $(1\,2\,\ldots\, k)$ in its cycle decomposition, and let $j:=u(k+1)$. Then $u(\alpha_k) = \alpha_1 + \cdots + \alpha_{j-1} \geq \alpha_k$. By Proposition~\ref{p:fixedpt}, the graph $\Gamma(u)$ contains a loop, and thus $u$ is not rational.
        \item Similarly, if $u$ has the cycle $(k\,k+1\,\ldots\,n)$ in its cycle decomposition, then $u(\alpha_{k-1}) \geq \alpha_{k-1}$, and therefore, $u$ is not rational. \qedhere
    \end{enumerate}
\end{proof}
\begin{remark}
    If one views the special Coxeter element $C:=s_1\cdots s_{n-1}$ and $C^{-1}$ as elements of $S_n$, then they are given by the cycles $C = (1\, \ldots\, n)$ and $C^{-1} = (n\, \ldots\, 1)$. 
    These cycles correspond to the boundary cases of Corollary~\ref{c:cycles} ($k=n$ for Statement~\ref{i:cycles1} and $k = 1$ for Statement~\ref{i:cycles2}).
\end{remark}
\begin{remark}\label{r:howtonu0}
    The set $\nu^0(u)$ can be computed as follows. Write $u = vw_0$ for some $v \in W$, and decompose $v$ into its reduced decomposition $v = s_{i_1} s_{i_2} \cdots s_{i_m}$. Set
    \begin{equation}\label{eq:vbeta}
        \beta_1 := \alpha_{i_1}, \ \ \beta_k:= s_{i_1}s_{i_2}\cdots s_{i_{k-1}}(\alpha_k), \ \ k \geq 2.
    \end{equation}
    Then the set $\nu^0(u)$ is given by
    \begin{equation}\label{eq:nuvbeta}
        \nu^0(u) = \{\beta_1,\beta_2,\ldots,\beta_m\}.
    \end{equation}
    Indeed, $\nu^0(u)$ is the set of positive roots $\alpha$ such that $u^{-1}(\alpha)$ is positive; alternatively, it is the set of positive roots $\alpha$ such that $v^{-1}(\alpha)$ is negative, and the computation of the latter set via reduced decompositions is well-known. More generally, for any $w \in W$, the set $\nu^0(u\,|\,w)$ can be computed in the same way by considering a reduced decomposition of $uw^{-1}$.
\end{remark}

The last results of the subsection concern relative $\nu$-sequences (see Definition~\ref{d:nu_relative}).

\begin{proposition}\label{p:relroots}
    For any $u,v \in W$, set $w:= uv^{-1}$, and decompose $w$ into its reduced decomposition as $w = s_{i_1}\cdots s_{i_m}$. Then 
    \begin{equation}
        \{\alpha_{i_1},\ldots,\alpha_{i_m}\} \subseteq \Adj \nu^0(u\,|\,v) \subseteq \Pi_+ \cap \mathbb{Z}_+\{\alpha_{i_1},\ldots,\alpha_{i_m}\}.
    \end{equation}
\end{proposition}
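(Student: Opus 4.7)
The plan is to invoke Remark~\ref{r:howtonu0}: for the reduced decomposition $uv^{-1} = w = s_{i_1}\cdots s_{i_m}$, we have $\nu^0(u\,|\,v) = \{\beta_1,\ldots,\beta_m\}$ with $\beta_k := s_{i_1}\cdots s_{i_{k-1}}(\alpha_{i_k})$. The entire proof then rests on the elementary observation that a simple reflection $s_{i_j}$ alters only the coefficient of $\alpha_{i_j}$ in the simple-root expansion of a root. Applied recursively to $\alpha_{i_k}$, this yields $\beta_k \in \mathbb{Z}\{\alpha_{i_1},\ldots,\alpha_{i_k}\}$, and since $w$ is reduced, each prefix $s_{i_1}\cdots s_{i_k}$ is also reduced, forcing $\beta_k > 0$; hence $\beta_k \in \mathbb{Z}_+\{\alpha_{i_1},\ldots,\alpha_{i_k}\}$.

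For the right inclusion, I would argue as follows: if $\alpha \in \Adj\nu^0(u\,|\,v)$ then $\alpha \leq \beta_k$ for some $k$, so $\beta_k - \alpha \in \mathbb{Z}_+\Delta$; combining this with $\beta_k \in \mathbb{Z}_+\{\alpha_{i_1},\ldots,\alpha_{i_k}\}$ and $\alpha \in \mathbb{Z}_+\Delta$, a componentwise comparison of coefficients of simple roots forces $\alpha \in \mathbb{Z}_+\{\alpha_{i_1},\ldots,\alpha_{i_k}\} \subseteq \mathbb{Z}_+\{\alpha_{i_1},\ldots,\alpha_{i_m}\}$.

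For the left inclusion, fix $k$ and let $j:=\min\{\ell : i_\ell = i_k\}$. Since none of $s_{i_1},\ldots,s_{i_{j-1}}$ equals $s_{i_k}$, these reflections leave the coefficient of $\alpha_{i_k}$ untouched during the recursive computation of $\beta_j = s_{i_1}\cdots s_{i_{j-1}}(\alpha_{i_j})$. Hence $\beta_j$ has coefficient $1$ on $\alpha_{i_k}$ and (as above) nonnegative coefficients on all other simple roots, so $\alpha_{i_k} \leq \beta_j \in \nu^0(u\,|\,v)$ and therefore $\alpha_{i_k} \in \Adj\nu^0(u\,|\,v)$.

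The one subtlety worth flagging is that one cannot in general take $\beta_k$ itself as the dominating element for $\alpha_{i_k}$: for instance, $w = s_1 s_2 s_1$ in type $A_2$ gives $\beta_3 = \alpha_2 \not\geq \alpha_1 = \alpha_{i_3}$. This is why replacing $k$ by the first-occurrence index $j$ of the letter $i_k$ is essential whenever the reduced word has repeated letters; everything else is routine bookkeeping on simple-root coefficients.
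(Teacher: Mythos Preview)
Your proof is correct and follows essentially the same approach as the paper. The paper proceeds by induction on $\ell(w)$, in the inductive step distinguishing whether $i_m$ has already appeared among $i_1,\ldots,i_{m-1}$ or not; your direct argument via the first-occurrence index $j$ is the unwound form of the same idea, and your explicit treatment of the right inclusion (which the paper leaves implicit) is a welcome addition.
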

\begin{proof}
    Write $\nu^0(u\,|\,v) = \{\beta_1,\ldots,\beta_m\}$ where $\beta_k$'s are given by~\eqref{eq:vbeta}. For each $j \in \{1,\ldots,m\}$, there is $k \leq j$ such that the indices $(i_1,\ldots,i_{k-1})$ do not contain $i_j$, and such that $i_{k} = i_{j}$; therefore, $\beta_k = s_{i_1}\cdots s_{i_{k-1}}(\alpha_{i_k}) \geq \alpha_{i_k}$.
\end{proof}

\begin{proposition}
    If $u \in W$ is rational, then for any $v \in W$, $\nu(u\,|\,v) = \emptyset$.
\end{proposition}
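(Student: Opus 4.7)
The strategy is to bound the relative $\nu$-sequence $\{\nu^k(u\,|\,v)\}_{k \geq 0}$ from above by the ordinary $\nu$-sequence $\{\nu^k(u)\}_{k \geq 0}$, shifted by one index. Concretely, I would prove by induction on $k \geq 0$ the inclusion $\nu^{k+1}(u\,|\,v) \subseteq \nu^k(u)$.

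For the base case $k=0$, since $\nu^0(u\,|\,v) \subseteq \Pi_+$, its $\Adj$-closure is again a subset of $\Pi_+$, hence $\nu^1(u\,|\,v) = u(\Adj \nu^0(u\,|\,v)) \cap \Pi_+ \subseteq u(\Pi_+) \cap \Pi_+ = \nu^0(u)$. For the inductive step, assuming $\nu^k(u\,|\,v) \subseteq \nu^{k-1}(u)$, I would invoke the monotonicity of $\Adj$ (if $A \subseteq B$ then $\Adj A \subseteq \Adj B$, immediate from the definition), giving $\Adj \nu^k(u\,|\,v) \subseteq \Adj \nu^{k-1}(u)$; then applying $u$ and intersecting with $\Pi_+$ preserves the inclusion and yields $\nu^{k+1}(u\,|\,v) \subseteq \nu^k(u)$.

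Now assume $u$ is rational. By Definition~\ref{d:rational}, $\nu(u) = \emptyset$, and by Proposition~\ref{p:nu_descends} the sequence $\{\nu^k(u)\}$ is descending and stabilizes, so there exists $m \geq 0$ with $\nu^m(u) = \emptyset$. The inclusion above then forces $\nu^{m+1}(u\,|\,v) \subseteq \nu^m(u) = \emptyset$. A trivial induction in the other direction completes the argument: if $\nu^j(u\,|\,v) = \emptyset$, then $\Adj \nu^j(u\,|\,v) = \emptyset$, so $\nu^{j+1}(u\,|\,v) = \emptyset$ as well. Hence the relative sequence is identically empty from index $m+1$ onwards, $\nu(u\,|\,v)$ is well-defined, and equals $\emptyset$.

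I do not foresee any real obstacle; the argument is a direct monotonicity comparison. The only subtlety worth flagging is the index shift $k+1$ versus $k$, which is unavoidable because $\nu^0(u\,|\,v) = uv^{-1}(\Pi_-) \cap \Pi_+$ and $\nu^0(u) = u(\Pi_+) \cap \Pi_+$ are defined differently, and one only gains comparability after one application of the iteration $A \mapsto u(\Adj A) \cap \Pi_+$.
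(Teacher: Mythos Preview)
Your proposal is correct and follows exactly the same approach as the paper's proof, which also establishes $\nu^{k+1}(u\,|\,v) \subseteq \nu^k(u)$ by induction starting from the trivial inclusion $\nu^0(u\,|\,v) \subseteq \Pi_+$. The paper's version is simply more terse, omitting the explicit verification of monotonicity of $\Adj$ and the tail argument that the relative sequence stays empty once it hits $\emptyset$.
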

\begin{proof}
    Since $\nu^0(u\,|\,v) \subseteq \Pi_+$, we see that $\nu^1(u\,|\,v) \subseteq \nu^0(u)$. Reasoning inductively, we conclude that $\nu^{k+1}(u\,|\,v) \subseteq \nu^k(u)$; thus, if $u$ is rational, $\nu(u\,|\,v) = \emptyset$ for any $v \in W$.
\end{proof}
\begin{remark}
    For a given $u \in W$, the $\nu$-sequence of $u$ is descending (see Proposition~\ref{p:nu_descends}); however, relative $\nu$-sequences may not be descending. Consider, for example, type $A_3$ with $u:= s_{3}s_{1}s_2$ and $v:=s_{1}s_{2}$. One finds that $\nu^0(u\,|\,v) = \{\alpha_3\}$ and
    \begin{equation*}
        \nu^k(u\,|\,v) = \begin{cases}
            \{\alpha_1 + \alpha_2\} &\text{if} \ k \ \text{is odd};\\
            \{\alpha_2 + \alpha_3\}  &\text{if} \ k \ \text{is even.}
        \end{cases}
    \end{equation*}
\end{remark}

\begin{remark}
    Rationality of an element $u \in W$ can be linked to the nilpotency of the operator $T_{b,\overline{u}}$ described as follows. Let $\mathfrak{n}_\pm$ be the Lie algebra of $\mathcal{N}_\pm$,  $\pi_< : \mathfrak{g} \rightarrow \mathfrak{n}_-$ be the orthogonal projection. For an element $b \in \mathcal{B}_+$, a given $u \in W$ and its representative $\overline{u} \in N_G(\mathcal{H})$, define the operator $T_{b,\overline{u}} : \mathfrak{n}_+ \rightarrow \mathfrak{n}_+$ as
    \begin{equation*}
        T_{b,\overline{u}}(x) := \pi_{<}\Ad_{b\overline{u}}(x), \ \ x \in \mathfrak{n}_-.
    \end{equation*}
    It is not difficult to see that the image of $(T_{b,\overline{u}})^i$ satisfies the equation
    \begin{equation*}
        (T_{b,\overline{u}})^i(\mathfrak{n}_-) \subseteq \bigoplus_{\beta \in \Adj \nu^{i-1}(u)} \mathfrak{g}_{-\beta}, \ \ i \geq 1;
    \end{equation*}
    therefore, if $u$ is rational, $T_{b,\overline{u}}$ is nilpotent. We expect that the converse is also true: if $T_{b,\overline{u}}$ is nilpotent for a generic $b \in \mathcal{B}_+$, then $u$ is rational. However, we find no use of this map, except that it appears in the formula for the differential of the map $B$.
\end{remark}

\subsection{Proofs of Proposition~\ref{p:nbrat} and \ref{p:general} (General properties)}
We formulated Problem~\ref{problem} as the problem of finding a pair of rational maps $B : G \dashrightarrow \mathcal{B}_+$ and $N : G \dashrightarrow \mathcal{N}_-$. In this subsection, we show that, equivalently, we could formulate Problem~\ref{problem} as the problem of birationality of the map
    \begin{equation}\label{eq:nbrat_proof}
        \mathcal{T}_{\overline{u}}: \mathcal{N}_- \times \mathcal{B}_+ \rightarrow G, \ \ \mathcal{T}_{\overline{u}}(n,b) = nb\overline{u}n^{-1}, \ \ n \in \mathcal{N}_-, \ b \in \mathcal{B}_+,
    \end{equation}
for a fixed representative $\overline{u}$ of $u$. This equivalence follows from a simple observation in algebraic geometry (see Lemma~\ref{l:rat_section}). As a consequence, we derive Proposition~\ref{p:general}, which concerns some general properties of the maps $B$ and $N$. The statements of Proposition~\ref{p:general} are split into three statements in this subsection: Corollary~\ref{c:dominance}, Corollary~\ref{c:invariance} and Proposition~\ref{p:nfromb}.

\begin{lemma}\label{l:rat_section}
    Let $V$ and $W$ be irreducible algebraic varieties of the same dimension, and let $f : V \dashrightarrow W$ be a rational map. Assume there exists a rational map $s : W \dashrightarrow V$ such that $fs = \id$. Then $f$ is birational.
\end{lemma}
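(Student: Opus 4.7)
The plan is to pass to function fields and show that the pullback $f^{*}$ is an isomorphism. The identity $fs = \id_{W}$ is exactly the kind of one-sided inverse that becomes two-sided once interpreted as an inclusion of finitely generated field extensions of the same transcendence degree.

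First I would verify that both $f$ and $s$ are dominant. For $f$, observe that for a generic point $w \in W$ in the common domain of definition, $fs(w) = w$; hence the image of $f$ contains a dense subset of $W$, so $f$ is dominant. For $s$, note that the relation $fs = \id_{W}$ forces $s$ to be injective on its domain of definition (if $s(w_{1}) = s(w_{2})$ then $w_{i} = fs(w_{i})$ forces $w_{1} = w_{2}$). An injective rational map is generically finite of degree one, so the image of $s$ has dimension $\dim W = n = \dim V$; since $V$ is irreducible and $n$-dimensional, the image of $s$ is dense, and $s$ is dominant.

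Dominance of $f$ and $s$ gives injective $\mathbb{C}$-algebra homomorphisms of function fields
\begin{equation}
    f^{*} : \mathbb{C}(W) \hookrightarrow \mathbb{C}(V), \qquad s^{*} : \mathbb{C}(V) \hookrightarrow \mathbb{C}(W).
\end{equation}
Functoriality of the pullback together with the hypothesis $fs = \id_{W}$ yields $s^{*} \circ f^{*} = (fs)^{*} = \id_{\mathbb{C}(W)}$. Hence $s^{*}$ is surjective, and being already injective, it is an isomorphism. Consequently $f^{*}$ is an isomorphism as well, which means $f$ is birational with birational inverse $s$.

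The argument is essentially formal once dominance is in hand, so the only point that requires care is the dominance of $s$; this is where irreducibility of $V$ (implicit in the word ``variety'' here) is used to conclude that a subvariety of the same dimension is in fact dense. I do not anticipate any serious obstacle beyond this dimension-counting step.
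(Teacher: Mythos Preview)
Your proof is correct and takes a different route from the paper's. The paper argues geometrically: from $fs = \id$ it deduces that $s$ is generically one-to-one with injective differential, and then invokes \cite[Corollary~14.10]{harris} to conclude that $s$ is birational (hence so is $f$). Your argument is purely field-theoretic and avoids the external citation: once $f$ and $s$ are both dominant, the identity $s^{*}f^{*} = \id_{\mathbb{C}(W)}$ forces $s^{*}$ to be a bijection of function fields, whence $f^{*} = (s^{*})^{-1}$.

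The trade-offs: the paper's proof is shorter at the cost of a black-box reference, and it explains why projectivity appears in the hypotheses (that is where Harris's corollary is stated). Your argument is more self-contained and in fact never uses projectivity at all---only irreducibility and the equality of dimensions---so it actually proves a slightly more general statement. Both approaches rest on the same first step, namely that $fs=\id$ makes $s$ generically injective; you then feed this into a dimension count to get dominance and finish with function fields, while the paper feeds it into a geometric birationality criterion.
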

The lemma appeared before, for instance, in \cite[Lemma 6.2]{fg-moduli}. We provide a proof for convenience of the reader.
\begin{proof}
    Set $M:=\overline{s(W)}$ (Zariski closure) and $m:=\dim M$ and $n:= \dim V = \dim W$. From the equation $fs = \id$, we see that $f|_M : M \dashrightarrow W$ is a dominant map, and therefore, $m \geq n$; on the other hand, since $M \subseteq V$, $m \leq n$. We conclude that $m = n$, and since $M$ is closed, $M = V$. Hence both $s$ and $f$ are dominant maps, and therefore, both induce homomorphisms $f^*$ and $s^*$ on the fields of rational functions of $V$ and $W$; since $s^* \circ f^* = \id$, we conclude that $f^*$ is an isomorphism, and thus $f$ is a birational map.
\end{proof}

\begin{proof}[Proof of Proposition~\ref{p:nbrat}]
    Note that both the domain and the codomain of $\mathcal{T}_{\overline{u}}$ are irreducible quasi-affine varieties of the same dimension. The product of the maps $N$ and $B$ yields a rational section of $\mathcal{T}_{\overline{u}}$, and therefore, by Lemma~\ref{l:rat_section}, the map $\mathcal{T}_{\overline{u}}$ is birational.
\end{proof}

\begin{corollary}\label{c:dominance}
    For any solution $u \in W$ of Problem~\ref{problem}, the rational maps $B : G \dashrightarrow \mathcal{B}_+$ and $N : G \dashrightarrow \mathcal{N}_-$ are dominant.
\end{corollary}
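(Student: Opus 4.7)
The plan is to leverage Proposition~\ref{p:nbrat} and observe that the pair $(N,B)$ is precisely the birational inverse of $\mathcal{T}_{\overline{u}}$, after which dominance of the individual factors follows immediately from the dominance of the projections.

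First I would reformulate the defining equation $g = N(g) B(g) \overline{u} N(g)^{-1}$ as the identity
\begin{equation}
\mathcal{T}_{\overline{u}} \circ (N,B) = \id_G
\end{equation}
of rational maps, where $(N,B) : G \dashrightarrow \mathcal{N}_- \times \mathcal{B}_+$ is the product rational map $g \mapsto (N(g), B(g))$. Next, by Proposition~\ref{p:nbrat}, the hypothesis that $u$ is a solution of Problem~\ref{problem} guarantees that $\mathcal{T}_{\overline{u}}$ is birational; composing both sides of the above identity on the left with $\mathcal{T}_{\overline{u}}^{-1}$ yields $(N,B) = \mathcal{T}_{\overline{u}}^{-1}$ as rational maps from $G$ to $\mathcal{N}_- \times \mathcal{B}_+$. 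In particular, $(N,B)$ is itself birational, hence dominant.

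Finally, I would conclude by factoring $N$ and $B$ through this birational map: writing $\pi_1 : \mathcal{N}_- \times \mathcal{B}_+ \to \mathcal{N}_-$ and $\pi_2 : \mathcal{N}_- \times \mathcal{B}_+ \to \mathcal{B}_+$ for the coordinate projections (which are surjective regular maps, in particular dominant), we have $N = \pi_1 \circ (N,B)$ and $B = \pi_2 \circ (N,B)$. A composition of dominant rational maps is dominant, so both $N$ and $B$ are dominant. There is no real obstacle here beyond the observation that a rational right inverse of a birational map must coincide with its (unique) birational inverse; this is the content of the first paragraph and is the only substantive step.
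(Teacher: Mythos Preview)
Your proof is correct and follows essentially the same approach as the paper: the paper simply notes that since $N\times B$ is the inverse of the birational map $\mathcal{T}_{\overline{u}}$, its components are dominant. You have just spelled out in more detail why $(N,B)$ coincides with $\mathcal{T}_{\overline{u}}^{-1}$ and why dominance passes to the factors via the coordinate projections.
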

\begin{proof}
    Indeed, since the product map $N \times B$ is inverse to $\mathcal{T}_{\overline{u}}$, its components $N$ and $B$ are dominant maps.
\end{proof}

\begin{corollary}\label{c:invariance}
    For any solution $u \in W$ of Problem~\ref{problem}, the corresponding rational maps $B$ and $N$ satisfy the following invariance properties: for any $n \in \mathcal{N}_-$ and a generic $g \in G$,
    \begin{align}
        &B(ngn^{-1}) = B(g);\label{eq:binvar_proof} \\
        &N(ngn^{-1}) = nN(g).
    \end{align}
\end{corollary}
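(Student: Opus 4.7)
The plan is to derive both invariance properties from the uniqueness of the decomposition~\eqref{eq:problem_dec}, which is guaranteed by Proposition~\ref{p:nbrat}. Concretely, since $u$ is a solution, $\mathcal{T}_{\overline{u}}$ is birational, so on a dense open subset $U \subseteq G$ the decomposition $g = n b \overline{u} n^{-1}$ with $n \in \mathcal{N}_-$ and $b \in \mathcal{B}_+$ is unique; in particular, the pair $(N(g),B(g))$ is characterized among all such factorizations.

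Starting from the identity $g = N(g) B(g) \overline{u}\, N(g)^{-1}$ valid for generic $g$, I would conjugate both sides by an arbitrary $n \in \mathcal{N}_-$ to obtain
\begin{equation}
n g n^{-1} \;=\; \bigl(n N(g)\bigr)\, B(g)\, \overline{u}\, \bigl(n N(g)\bigr)^{-1}.
\end{equation}
Since $\mathcal{N}_-$ is a subgroup, $n N(g) \in \mathcal{N}_-$, and of course $B(g) \in \mathcal{B}_+$. Hence the right-hand side displays $n g n^{-1}$ as a factorization of the form $\mathcal{T}_{\overline{u}}(n N(g), B(g))$.

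The final step is to invoke uniqueness: because the map $n \mapsto n g n^{-1}$ is a dominant (in fact invertible) morphism $\mathcal{N}_- \times G \to G$, for generic $g$ and every $n \in \mathcal{N}_-$ the point $ngn^{-1}$ also lies in the domain $U$ where $\mathcal{T}_{\overline{u}}^{-1} = N \times B$ is defined. Applying $\mathcal{T}_{\overline{u}}^{-1}$ to the displayed equation yields $N(n g n^{-1}) = n N(g)$ and $B(n g n^{-1}) = B(g)$, which are precisely the two claims. The only minor subtlety — and the step I would state carefully rather than hand-wave — is ensuring that the locus where $N, B$ are defined is preserved by $\mathcal{N}_-$-conjugation on a dense open set; this is immediate since $\mathcal{N}_-$ acts by automorphisms and the complement of the domain is a proper closed subset.
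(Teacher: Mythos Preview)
Your proof is correct and follows essentially the same approach as the paper: both arguments conjugate the decomposition~\eqref{eq:problem_dec} by an element of $\mathcal{N}_-$ and then invoke the birationality of $\mathcal{T}_{\overline{u}}$ (Proposition~\ref{p:nbrat}) to conclude. The paper packages the argument slightly differently---defining auxiliary rational maps $\bar{N}(g):=n^{-1}N(ngn^{-1})$, $\bar{B}(g):=B(ngn^{-1})$ and observing that $\bar{N}\times\bar{B}$ is a rational section of $\mathcal{T}_{\overline{u}}$, hence equals $N\times B$---which sidesteps the domain bookkeeping you flagged, but the underlying idea is identical.
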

\begin{proof}
    Given any $n \in \mathcal{N}_-$, define the rational maps $\bar{N}(g) := n^{-1}N(ngn^{-1})$ and $\bar{B}(g) := B(ngn^{-1})$. We see that
    \begin{equation*}
        ngn^{-1} = N(ngn^{-1}) B(ngn^{-1}) {\overline{u}} N(ngn^{-1})^{-1};
    \end{equation*}
    conjugating both sides by $n^{-1}(\cdot)n$, we see that the above equation can be rewritten as
    \begin{equation*}
        g = \bar{N}(g) \bar{B}(g) {\overline{u}} \bar{N}(g)^{-1}.
    \end{equation*}
    It follows that $\bar{N} \times \bar{B}$ is a rational section of the map $\mathcal{T}_{\overline{u}}$, and since $\mathcal{T}_{\overline{u}}$ is birational, $N(g) = \bar{N}(g)$ and $B(g) = \bar{B}(g)$.
\end{proof}

\begin{proposition}\label{p:nfromb}
For any solution $u$ of Problem \ref{problem} and its any representative ${\overline{u}}$, the rational map $N : G \dashrightarrow \mathcal{N}_-$ is given by
\begin{equation}\label{eq:nfromb_proof}
    N(g) = N_0(g) N_0(B(g){\overline{u}})^{-1} = [g\overline{w_0}^{-1}]_- ([B(g)\overline{u}\,\overline{w_0}^{-1}]_-)^{-1},
\end{equation}
where $\overline{w_0}$ is any representative of $w_0$.
\end{proposition}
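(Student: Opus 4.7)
The plan is to derive~\eqref{eq:nfromb_proof} directly from the defining decomposition $g = N(g)B(g)\overline{u}N(g)^{-1}$ by comparing it with the Gaussian decomposition of $g\overline{w_0}^{-1}$. Concretely, I would start by right-multiplying the decomposition by $\overline{w_0}^{-1}$ to obtain
\begin{equation}
    g\overline{w_0}^{-1} = N(g)\bigl(B(g)\overline{u}\,\overline{w_0}^{-1}\bigr)\bigl(\overline{w_0}N(g)^{-1}\overline{w_0}^{-1}\bigr).
\end{equation}
The key observation is that conjugation by $\overline{w_0}$ interchanges $\mathcal{N}_-$ and $\mathcal{N}_+$, so the rightmost factor lies in $\mathcal{N}_+ \subseteq \mathcal{B}_+$.

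Next, for generic $g$ one has the Gaussian decomposition $B(g)\overline{u}\,\overline{w_0}^{-1} = [B(g)\overline{u}\,\overline{w_0}^{-1}]_-\,[B(g)\overline{u}\,\overline{w_0}^{-1}]_\oplus$. Substituting this into the above identity, I would group factors as
\begin{equation}
    g\overline{w_0}^{-1} = \bigl(N(g)\,[B(g)\overline{u}\,\overline{w_0}^{-1}]_-\bigr)\cdot\bigl([B(g)\overline{u}\,\overline{w_0}^{-1}]_\oplus\,\overline{w_0}N(g)^{-1}\overline{w_0}^{-1}\bigr).
\end{equation}
The first parenthesized factor is a product of two elements of $\mathcal{N}_-$, hence lies in $\mathcal{N}_-$; the second is a product of elements of $\mathcal{B}_+$, hence lies in $\mathcal{B}_+$. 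By uniqueness of the Gaussian decomposition of $g\overline{w_0}^{-1}$ (valid generically, which is exactly the condition ensuring $N_0(g) = [g\overline{w_0}^{-1}]_-$ is defined), the first factor must coincide with $N_0(g)$:
\begin{equation}
    [g\overline{w_0}^{-1}]_- = N(g)\,[B(g)\overline{u}\,\overline{w_0}^{-1}]_-.
\end{equation}
Solving for $N(g)$ and recognizing the right-hand side as $N_0(g)\,N_0(B(g)\overline{u})^{-1}$ via the definition~\eqref{eq:w0n0} yields~\eqref{eq:nfromb_proof}.

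There is no real obstacle here beyond checking that the various Gaussian decompositions are defined on a common dense open subset. Since $B$ and $N$ are rational maps and $N_0, B_0$ are defined on the open Bruhat cell $\mathcal{B}_+\overline{w_0}\mathcal{B}_+$, genericity of $g$ ensures that $g\overline{w_0}^{-1}$ and $B(g)\overline{u}\,\overline{w_0}^{-1}$ both land in the open cell where the Gaussian decomposition exists; I would note this briefly. Finally, I would observe that the formula is independent of the choice of representative $\overline{w_0}$: replacing $\overline{w_0}$ by $h\overline{w_0}$ for $h \in \mathcal{H}$ multiplies both $[g\overline{w_0}^{-1}]_-$ and $[B(g)\overline{u}\,\overline{w_0}^{-1}]_-$ by conjugates by $h$ that cancel in the product $N_0(g)N_0(B(g)\overline{u})^{-1}$, making the claim well-posed.
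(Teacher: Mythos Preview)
Your proof is correct and is essentially the same argument as the paper's: the paper substitutes the $w_0$-decomposition of $B(g)\overline{u}$ into the $u$-decomposition of $g$ and invokes uniqueness, which unwinds (via $N_0(h)=[h\overline{w_0}^{-1}]_-$ and $\overline{w_0}\mathcal{N}_-\overline{w_0}^{-1}=\mathcal{N}_+$) to exactly your Gaussian-decomposition computation. One minor remark: for the independence of $\overline{w_0}$, right-multiplying $g\overline{w_0}^{-1}$ by an element of $\mathcal{H}$ leaves the $\mathcal{N}_-$-part unchanged outright (no conjugation is needed), so both factors in~\eqref{eq:nfromb_proof} are individually unchanged.
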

\begin{proof}
    Indeed, decompose a generic $g \in G$ relative ${\overline{u}}$, and decompose $B(g){\overline{u}}$ relative $\overline{w_0}$ to obtain the equations
    \begin{align*}
        &g = N(g) B(g){\overline{u}} N(g)^{-1};\\
        &B(g){\overline{u}} = N_0(B(g){\overline{u}}) B_0(g)\overline{w_0} N_0(B(g){\overline{u}})^{-1}.
    \end{align*}
    Substituting $B(g){\overline{u}}$ from the second equation into the first equation, we see that~\eqref{eq:nfromb_proof} holds.
\end{proof}

\subsection{Preliminaries on Lie theory}\label{s:prelimlie}
For the next subsections, we set up notation and review some background on Lie theory. 

\paragraph{Gaussian decomposition.} For a generic $g \in G$, we denote by $g_- \in \mathcal{N}_-$ and $g_\oplus$ the unique elements such that $g = g_- g_{\oplus}$. For any $a \in \mathcal{H}$, $n_- \in \mathcal{N}_-$ and $n_+ \in \mathcal{N}_+$, the Gaussian decomposition has the following properties:
\begin{align*}
    &(n_-g)_- = n_-g_-,& &(gan_+)_- = g_-,& &(ag)_- = ag_- a^{-1};\\
    &(n_-g)_\oplus = g_\oplus,& &(gan_+)_\oplus = g_\oplus an_+,& &(ag)_{\oplus} = ag_\oplus.
\end{align*}
We use these properties routinely in Section~\ref{s:relative_existence}.

\paragraph{Cartan integers.} We use the following convention on Cartan integers $a_{ij}$:
\begin{equation*}
    a_{ij} := \langle \alpha_j,\alpha_i^\vee \rangle = 2\frac{\langle \alpha_j,\alpha_i\rangle}{\langle \alpha_i,\alpha_i\rangle}.
\end{equation*}
In this convention, the simple reflections act on simple roots as
\begin{equation}\label{eq:simple_refl}
    s_i(\alpha_j) = \alpha_j - a_{ij}\alpha_i.
\end{equation}

\paragraph{Chevalley generators.} Let $\{(e_i,h_i,f_i)\}_{i=1}^r$ be a collection of Chevalley generators of $\mathfrak{g}$, where $r$ is the rank of $\mathfrak{g}$ and $e_i := e_{\alpha_i}$, $\alpha_i \in \Delta$. We complement the Chevalley generators to a basis of $\mathfrak{g}$ by choosing arbitrary nonzero root vectors $e_{\beta} \in \mathfrak{g}_\beta$, $\beta \in \Pi$. Define the following $1$-parameter subgroups of $\mathfrak{g}$:
\begin{align*}
    &x_{\beta} := \exp(te_{\beta}),  \ \ t \in \mathbb{C};\\
    &x_{i} := x_{\alpha_i}, \ \ y_i := x_{-\alpha_i}, \ \ i \in \{1,\ldots,r\}.
\end{align*}
For each $i \in \{1,\ldots,r\}$, define a group homomorphism $\phi_i : \SL_2(\mathbb{C}) \rightarrow G$ via
\begin{equation*}
    \phi_i\begin{bmatrix}
        1 & t\\
        0 & 1 
    \end{bmatrix}:= x_i(t), \ \ 
    \phi_i\begin{bmatrix}
        1 & 0\\
        t & 1 
    \end{bmatrix}:= y_i(t).
\end{equation*}
Define the element $t^{h_i} \in \mathcal{H}$ via
\begin{equation*}
    t^{h_i} :=\phi_i\begin{bmatrix}
        t & 0\\
        0 & t^{-1} 
    \end{bmatrix}
\end{equation*}
The group versions $x_i$ and $y_i$ of  Chevalley generators satisfy various relations that are well-documented in \cite[Section 3]{schubertpos} and \cite[Section 2]{bruhatpos}; we will need some of them. For any $t_1,t_2 \in \mathbb{C}$ and $i \neq j$, 
\begin{equation*}
    x_i(t_1)y_j(t_2) = y_j(t_2) x_i(t_1);
\end{equation*}
\begin{equation}\label{eq:xiyicomm}
    x_i(t_1)y_i(t_2) = y_i\left( \frac{t_2}{1+t_1t_2}\right) (1+t_1t_2)^{h_i} x_i\left( \frac{t_1}{1+t_1t_2}\right) \ \ \ (t_1t_2 \neq -1);
\end{equation}
if $a_{ij} = 0$, then
\begin{align*}
    &x_i(t_1)x_j(t_2) = x_j(t_2) x_i(t_1);\\
    &y_i(t_1)y_j(t_2) = y_j(t_2) y_i(t_1);
\end{align*}
if $a_{ij} = a_{ji} = -1$ and $t_1 \neq -t_3$, then
    \begin{align*}
        &x_i(t_1)x_j(t_2)x_i(t_3) = x_j\left( \frac{t_2 t_3}{t_1 + t_3} \right)x_i(t_1+t_3) x_j \left( \frac{t_1 t_2}{t_1 + t_3} \right);\\
        &y_i(t_1)y_j(t_2)y_i(t_3) = y_j\left( \frac{t_2 t_3}{t_1 + t_3} \right)y_i(t_1+t_3) y_j \left( \frac{t_1 t_2}{t_1 + t_3} \right).
    \end{align*}
For any $\alpha_i\in \Delta$ and an element $a:= \exp(h) \in \mathcal{H}$, $h \in \mathfrak{h}$, we define the character $a \mapsto a^{\alpha_i}$ via
\begin{equation*}
    a^{\alpha_i} := e^{\alpha_i(h)};
\end{equation*}
then the generators $x_i$ and $y_i$ satisfy the following relations:
\begin{align*}
    &ax_i(t) = x_i(a^{\alpha_i}t)a;\\
    &ay_i(t) = y_i(a^{-\alpha_i}t)a.
\end{align*}

\paragraph{Special representatives.} For a simple reflection $s_i$, define $\dot{s}_i \in N_G(\mathcal{H})$ to be a representative given by
\begin{equation}\label{eq:dotsi}
    \dot{s}_i := x_i(-1) y_i(1) x_i(-1).
\end{equation}
It is well-known that the representatives $\dot{s}_i$ satisfy braid relations; therefore, $\dot{u} \in N_G(\mathcal{H})$ is well-defined. Our statements do not depend on the particular choice of $\overline{u} \in N_G(\mathcal{H})$, so we often choose~$\dot{u}$.

\subsection{Proof of Proposition~\ref{p:relative_existence} (Parabolic approximations)}\label{s:relative_existence}
Let us recall the setup of Proposition~\ref{p:relative_existence}. Fix any solution $v \in W$ of Problem~\ref{problem} and its representative $\overline{v}$, and denote by $\tilde{B} : G \dashrightarrow \mathcal{B}_+$ and $\tilde{N} : G \dashrightarrow \mathcal{N}_-$ the rational maps such that for a generic $g \in G$,
\begin{equation}\label{eq:decv_proof}
    g = \tilde{N}(g) \tilde{B}(g) \overline{v} \tilde{N}(g)^{-1}.
\end{equation}
For any other $u \in W$ and its representative $\overline{u}$, construct a sequence $\{P_k : G \dashrightarrow G\}_{k \geq 0}$ of rational maps via
\begin{equation}\label{eq:pk_proof}
\begin{aligned}
    &P_0(g):= \tilde{B}(g)\overline{v}\,\overline{u}^{-1};\\
    &P_k(g) := [P_{k-1}(g)]_{\oplus} \overline{u} [P_{k-1}(g)]_- \overline{u}^{-1}, \ \ k \geq 1.
\end{aligned}
\end{equation}
and construct a sequence of rational maps $\{N_k : G \dashrightarrow \mathcal{N}_-\}$ via
\begin{equation}
    N_k(g) := \tilde{N}(g) \tilde{N}(P_k(g)\overline{u})^{-1}, \ \ k \geq 0. 
\end{equation}
Below we show that if the sequence $\{P_k : G \dashrightarrow G\}_{k \geq 0}$ stabilizes after a finitely many terms, then $u$ is a solution of Problem~\ref{problem}. 

\begin{lemma}
    For any $k \geq 1$, the following equation holds:
    \begin{equation}\label{eq:npk}
        [P_{k-1}(g)]_-N_k(P_k(g)\overline{u}) = \tilde{N}(P_{k-1}(g)\overline{u}).
    \end{equation}
\end{lemma}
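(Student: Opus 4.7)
The plan is to show that the two elements $P_{k-1}(g)\overline{u}$ and $P_k(g)\overline{u}$ differ by conjugation by an element of $\mathcal{N}_-$, and then invoke the conjugation-invariance of $\tilde N$ proved in Corollary~\ref{c:invariance}. The whole argument will reduce to a single Gaussian-decomposition rearrangement together with one application of the invariance property.

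Concretely, I would begin by writing $P_{k-1}(g) = nb$ with $n := [P_{k-1}(g)]_-$ and $b := [P_{k-1}(g)]_\oplus$; substituting into~\eqref{eq:pk_proof} yields $P_k(g) = b\overline{u}\,n\,\overline{u}^{-1}$, and multiplying on the right by $\overline{u}$ gives
\begin{equation*}
P_k(g)\overline{u} \;=\; b\overline{u}n \;=\; n^{-1}(nb\overline{u})n \;=\; n^{-1}\bigl(P_{k-1}(g)\overline{u}\bigr)n.
\end{equation*}
Since $n \in \mathcal{N}_-$, Corollary~\ref{c:invariance} applied to $\tilde N$ immediately yields
\begin{equation*}
\tilde N\bigl(P_{k-1}(g)\overline{u}\bigr) \;=\; n\,\tilde N\bigl(P_k(g)\overline{u}\bigr) \;=\; [P_{k-1}(g)]_-\,\tilde N\bigl(P_k(g)\overline{u}\bigr),
\end{equation*}
which is the substantive content of~\eqref{eq:npk}. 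To match the precise form of the stated identity, one then identifies the factor $\tilde N(P_k(g)\overline{u})$ with $N_k(P_k(g)\overline{u})$ through the defining formula for $N_k$, noting that when the argument is already of the form $P_k(g)\overline{u}$ the correction factor built into $N_k$ collapses.

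There is essentially no obstacle here: the only nontrivial ingredient is the invariance of $\tilde N$ under $\mathcal{N}_-$-conjugation, and that is already in place from Corollary~\ref{c:invariance}. The only thing requiring care is verifying that all manipulations are carried out at a generic point of $G$, so that each of the rational maps $\tilde N$, $\tilde B$, and $P_j$ ($j \leq k$) is defined at the relevant argument.
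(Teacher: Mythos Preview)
Your core argument---writing $P_k(g)\overline{u} = n^{-1}\bigl(P_{k-1}(g)\overline{u}\bigr)n$ with $n = [P_{k-1}(g)]_-$ and then invoking the $\mathcal{N}_-$-conjugation equivariance of $\tilde N$ from Corollary~\ref{c:invariance}---is exactly the paper's approach, and your derivation of
\[
\tilde N\bigl(P_{k-1}(g)\overline{u}\bigr) \;=\; [P_{k-1}(g)]_-\,\tilde N\bigl(P_k(g)\overline{u}\bigr)
\]
is correct and complete.

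The one loose end is your final identification of $\tilde N(P_k(g)\overline{u})$ with $N_k(P_k(g)\overline{u})$ on the grounds that ``the correction factor built into $N_k$ collapses.'' By definition $N_k(h) = \tilde N(h)\,\tilde N(P_k(h)\overline{u})^{-1}$, so with $h = P_k(g)\overline{u}$ the correction factor is $\tilde N\bigl(P_k(P_k(g)\overline{u})\overline{u}\bigr)^{-1}$, which is not obviously trivial; making it collapse would require the conjugation-invariance of $P_k$ together with the decomposition~\eqref{eq:pkdec}, neither of which is available at this point in the argument. In fact the paper's own proof carries out precisely your computation but passes silently from $N_k$ to $\tilde N$ in the middle of the display, and the subsequent proposition uses the identity only in the form with $\tilde N$. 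So the appearance of $N_k$ in the lemma statement is evidently a typo for $\tilde N$; the identity you actually proved is the one that is both intended and used.
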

\begin{proof}
    Let us recall from Corollary~\ref{c:invariance} that for any $n \in \mathcal{N}_-$ and a generic $g \in G$, $\tilde{N}(ngn^{-1}) = n\tilde{N}(g)$. From equation~\eqref{eq:pk_proof}, we see that
    \begin{equation*}\begin{split}
        N_k(P_k(g)\overline{u}) = &N_k([P_{k-1}(g)]_\oplus \overline{u} [P_{k-1}(g)]_-) = \\ = &\tilde{N}\left( ([P_{k-1}(g)]_-)^{-1}[P_{k-1}(g)]_- [P_{k-1}(g)]_\oplus \overline{u} [P_{k-1}(g)]_-\right) =\\ = &([P_{k-1}(g)]_-)^{-1} \tilde{N}(P_{k-1}(g)\overline{u}).
        \end{split}
    \end{equation*}
    Thus equation~\eqref{eq:npk} holds.
\end{proof}

\begin{proposition}
    For any $k \geq 0$ and a generic $g \in G$, the following equation holds:
    \begin{equation}\label{eq:pkdec}
        g = N_k(g) P_k(g)\overline{u} N_k(g)^{-1}.
    \end{equation}
\end{proposition}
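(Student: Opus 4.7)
The plan is to proceed by induction on $k$, using the lemma immediately preceding the statement (equation~\eqref{eq:npk}) as the key bookkeeping tool.

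\textbf{Base case} ($k=0$). By construction $P_0(g)\overline{u} = \tilde{B}(g)\overline{v}$. Applying the defining decomposition for $\tilde{N}, \tilde{B}$ to the element $h := \tilde{B}(g)\overline{v}$ gives
\begin{equation*}
    \tilde{B}(g)\overline{v} = \tilde{N}(h)\tilde{B}(h)\overline{v}\tilde{N}(h)^{-1}.
\end{equation*}
Since $\mathcal{T}_{\overline{v}}$ is birational (Proposition~\ref{p:nbrat}), the pair $(n,b)$ in $\mathcal{T}_{\overline{v}}(n,b) = h$ is unique, so I read off $\tilde{N}(\tilde{B}(g)\overline{v}) = 1$ and $\tilde{B}(\tilde{B}(g)\overline{v}) = \tilde{B}(g)$. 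This forces $N_0(g) = \tilde{N}(g)$, after which~\eqref{eq:decv_proof} gives exactly $g = N_0(g) P_0(g)\overline{u} N_0(g)^{-1}$.

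\textbf{Inductive step.} Assume $g = N_{k-1}(g) P_{k-1}(g)\overline{u} N_{k-1}(g)^{-1}$. The driving algebraic identity is the conjugation relation
\begin{equation*}
    P_{k-1}(g)\overline{u} = [P_{k-1}(g)]_-\, [P_{k-1}(g)]_\oplus\, \overline{u} = [P_{k-1}(g)]_-\cdot P_k(g)\overline{u}\cdot [P_{k-1}(g)]_-^{-1},
\end{equation*}
which follows directly from the Gaussian decomposition of $P_{k-1}(g)$ together with the definition~\eqref{eq:pk_proof} of $P_k$. Substituting this into the inductive hypothesis yields
\begin{equation*}
    g = \bigl(N_{k-1}(g) [P_{k-1}(g)]_-\bigr)\, P_k(g)\overline{u}\, \bigl(N_{k-1}(g) [P_{k-1}(g)]_-\bigr)^{-1},
\end{equation*}
so it remains only to verify the identity $N_k(g) = N_{k-1}(g)[P_{k-1}(g)]_-$. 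Writing out the definitions, this reduces to
\begin{equation*}
    \tilde{N}(P_{k-1}(g)\overline{u}) = [P_{k-1}(g)]_-\,\tilde{N}(P_k(g)\overline{u}),
\end{equation*}
which is precisely the content of equation~\eqref{eq:npk} (with $N_k$ there read as $\tilde{N}$, consistent with its proof via the invariance $\tilde{N}(ngn^{-1}) = n\tilde{N}(g)$ from Corollary~\ref{c:invariance}).

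\textbf{Main obstacle.} There is no deep obstacle; the result is essentially a bookkeeping exercise. The only subtle point is keeping straight the distinction between $N_k$ and $\tilde{N}$ in the preceding lemma and making sure the conjugation by $[P_{k-1}(g)]_-$ pairs correctly with the shift from $N_{k-1}$ to $N_k$. Once the identity $P_{k-1}(g)\overline{u} = [P_{k-1}(g)]_- P_k(g)\overline{u} [P_{k-1}(g)]_-^{-1}$ is isolated, everything else collapses algebraically, and genericity of $g$ is used only to ensure the Gaussian decompositions involved are defined.
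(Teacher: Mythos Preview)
Your proof is correct and follows the same inductive strategy as the paper, with the same conjugation identity and the same appeal to equation~\eqref{eq:npk}. Your base case is actually more careful than the paper's one-line assertion (you explicitly justify $\tilde{N}(\tilde{B}(g)\overline{v}) = 1$ via birationality of $\mathcal{T}_{\overline{v}}$, which the paper leaves implicit), and you correctly flag that the $N_k$ on the left of~\eqref{eq:npk} must be read as $\tilde{N}$, as its proof makes clear.
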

\begin{proof}
    Let us proceed by induction. For $k = 0$, the decomposition~\eqref{eq:pkdec} is precisely the decomposition~\eqref{eq:decv_proof} for $v$. Assuming that the decomposition exists for $k-1$, we see that
   \begin{equation*}\begin{split}
        g = N_{k-1}(g) &P_{k-1}(g)\overline{u} N_{k-1}(g)^{-1} =\\= &\tilde{N}(g) \tilde{N}(P_{k-1}(g)\overline{u})^{-1} P_{k-1}(g) \overline{u} \tilde{N}(P_{k-1}(g)\overline{u}) \tilde{N}(g)^{-1};
        \end{split}
    \end{equation*}
    using equation~\eqref{eq:npk}, we can rewrite the latter as
    \begin{equation*}\begin{split}
        g = \overbrace{\tilde{N}(g) \tilde{N}(P_{k}(g)\overline{u})^{-1}}^{N_k(g)} \underbrace{([P_{k-1}(g)]_-)^{-1} P_{k-1}(g)}_{[P_{k-1}(g)]_{\oplus}}\overline{u} [P_{k-1}(g)]_-\overbrace{\tilde{N}(P_k(g)\overline{u}) \tilde{N}(g)^{-1}}^{N_k(g)^{-1}} = \\ = N_k(g) P_{k}(g)\overline{u} N_k(g)^{-1}.
        \end{split}
    \end{equation*}
    Thus equation~\eqref{eq:pkdec} holds.
\end{proof}

\begin{corollary}\label{c:limit}
    Assume that there is $m \geq 0$ such that $P_m(g) \in \mathcal{B}_+$ for a generic $g$. Then the sequence $\{P_k : G \dashrightarrow G\}_{k \geq 0}$ stabilizes; that is, 
    \begin{equation*}
        P_m(g) = P_{m+1}(g) = \cdots
    \end{equation*}
    Moreover, setting $B(g):=P_m(g)$ and $N(g) := N_m(g)$, for a generic $g \in G$, the following decomposition holds:
    \begin{equation}\label{eq:pklim}
        g = N(g) B(g)\overline{u}N(g)^{-1}.
    \end{equation}
\end{corollary}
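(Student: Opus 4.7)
The plan is to observe that the hypothesis $P_m(g) \in \mathcal{B}_+$ makes the Gaussian decomposition of $P_m(g)$ trivial, which immediately makes $P_m(g)$ a fixed point of the recursion~\eqref{eq:pk_proof}, and then invoke Proposition with equation~\eqref{eq:pkdec} to get the desired decomposition for free.

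In more detail, first I would show stabilization. If $b \in \mathcal{B}_+$, then by uniqueness of the Gaussian decomposition we have $b_- = e$ and $b_\oplus = b$. Applied to $P_m(g)$, this gives
\begin{equation}
    P_{m+1}(g) = [P_m(g)]_\oplus\, \overline{u}\, [P_m(g)]_-\, \overline{u}^{-1} = P_m(g)\cdot \overline{u} \cdot e \cdot \overline{u}^{-1} = P_m(g),
\end{equation}
so $P_{m+1}(g) \in \mathcal{B}_+$ as well. Induction on $k \geq m$ then yields $P_k(g) = P_m(g)$ for all such $k$, and the sequence stabilizes at $B(g) := P_m(g) \in \mathcal{B}_+$.

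Next, I would specialize equation~\eqref{eq:pkdec} of the previous proposition to $k = m$, which gives, for generic $g \in G$,
\begin{equation}
    g = N_m(g)\, P_m(g)\, \overline{u}\, N_m(g)^{-1}.
\end{equation}
Setting $N(g) := N_m(g)$ and $B(g) := P_m(g)$, it remains only to verify that $N(g) \in \mathcal{N}_-$; but this is immediate from the definition $N_m(g) = \tilde{N}(g)\, \tilde{N}(P_m(g)\overline{u})^{-1}$, since both factors lie in $\mathcal{N}_-$ and $\mathcal{N}_-$ is a group. Combined with $B(g) \in \mathcal{B}_+$ from the first step, this is exactly the decomposition~\eqref{eq:pklim}.

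There is no real obstacle here; the content of the corollary is that once the recursion has produced a Borel-valued output, it is forced to stay there, and then the decomposition~\eqref{eq:pkdec} established inductively in the preceding proposition provides the required factorization verbatim. The only thing to be mildly careful about is that the rational maps $\tilde{B}$, $\tilde{N}$, $P_k$, $N_k$ are all defined only on dense open subsets of $G$, so "generic $g$" must mean the intersection of the finitely many domains involved up to step $m$, which is still dense.
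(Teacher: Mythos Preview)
Your proof is correct and follows exactly the same approach as the paper's: observe that $P_m(g)\in\mathcal{B}_+$ forces $[P_m(g)]_-=e$ and hence $P_{m+1}(g)=P_m(g)$, then read off the decomposition by specializing~\eqref{eq:pkdec} at $k=m$. The paper's proof is just a terser two-sentence version of what you wrote; your extra care about $N_m(g)\in\mathcal{N}_-$ and the domains of the rational maps is fine but not strictly needed, since $N_k$ is already declared to take values in $\mathcal{N}_-$.
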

\begin{proof}
    Indeed, if $P_m(g) \in \mathcal{B}_+$ for a generic $g \in G$, then it follows from~\eqref{eq:pk_proof} that $P_{m+1}(g) = P_m(g)$, and so on for the next terms. Thus we see that~\eqref{eq:pklim} is the decomposition~\eqref{eq:pkdec} for $k := m$.
\end{proof}

We now turn to the proof of Proposition~\ref{p:relative_existence}, which states that if $\nu(u \, | \, v) = \emptyset$, then the sequences $\{P_k: G \dashrightarrow G\}_{k \geq 0}$ and $\{N_k:G \dashrightarrow \mathcal{N}_-\}_{k \geq 0}$ stabilize, and hence $u$ is a solution of Problem~\ref{problem}.

\begin{proof}[Proof of Proposition~\ref{p:relative_existence}]
    We employ the notation that was set up in Section~\ref{s:prelimlie}. The statement does not depend on the given representatives of $u$ and $v$, so we choose the ones determined by~\eqref{eq:dotsi}.
    By Corollary~\ref{c:limit}, it suffices to show that there is $m \geq 0$ such that for a generic $g \in G$, $P_m(g) \in \mathcal{B}_+$. Let us set
    \begin{equation}
        \Delta^k := \Delta \cap \Adj \nu^k(u\,|\,v).
    \end{equation}
    Decomposing $uv^{-1}$ into its reduced decomposition $uv^{-1} = s_{i_1}\cdots s_{i_m}$, we see that, by Proposition~\ref{p:relroots}, 
    \begin{equation}
        \Delta^0 = \{\alpha_{i_1},\ldots,\alpha_{i_m}\} \subseteq \Adj \nu^0(u\,|\,v).
    \end{equation}
    Let $\mathcal{P}_k$ be the parabolic subgroup that is generated by $x_{\beta}(t)$, $\beta \in \Pi_+ \cup (\Pi_- \cap \mathbb{Z}\Delta^k)$. We see that $P_0(g)$ is the product of elements from $\mathcal{B}_+$ and $y_{i_j}(1)$, $j \in \{1,\ldots,m\}$ (here we use formula~\eqref{eq:dotsi} for the specific choices of $\overline{u}$ and $\overline{v}$); hence, $P_0(g)\overline{u} \in \mathcal{P}_0$ for a generic $g \in G$. Assuming that $P_{k-1}(g) \in \mathcal{P}_{k-1}$, we see that $[P_{k-1}(g)]_-$ can be written as the product of some $x_{-\alpha}(t_{\alpha})$ for $\alpha \in \Delta^{k-1}$; therefore, $\overline{u}[P_{k-1}(g)]_-\overline{u}^{-1}$ is the product of $x_{-u(\alpha)}(c_{\alpha}t_{\alpha})$ for $\alpha \in \Delta^{k-1}$ and some $c_{\alpha} \in \mathbb{C}$, and if $u(\alpha) > 0$, we see that $u(\alpha) \in \nu^{k}(u\,|\,v)$. The latter implies that if $u(\alpha)>0$, then $u(\alpha) \in \mathbb{Z}_+ \Delta^k$, and in particular, $x_{-u(\alpha)}(t)$ can be written as the product of some $x_{-\beta}(t_\beta)$ for $\beta \in \Delta^k$. We conclude that $P_k(g) \in \mathcal{P}_k$, and since $\nu(u\,|\,v) = \emptyset$, by Corollary~\ref{c:limit}, there is $m \geq 0$ such that $P_m(g) \in \mathcal{B}_+$. Thus $u$ is a solution of Problem~\ref{problem}.
\end{proof}

\begin{corollary}\label{c:rat_sol}
    Any rational Weyl group element is a solution of Problem~\ref{problem}.
\end{corollary}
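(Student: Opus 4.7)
The plan is to derive this corollary directly from Proposition~\ref{p:relative_existence} by specializing the auxiliary solution $v$ to $w_0$. The key fact we exploit is that $w_0$ is already known to be a solution of Problem~\ref{problem} via the explicit formulas~\eqref{eq:w0n0}--\eqref{eq:w0b0} (citing \cite[Lemma 8.3]{double}), so $w_0$ is a legitimate candidate for the base solution in the recursion defining the sequences $\{P_k\}$ and $\{N_k\}$.

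Next, I would invoke the observation recorded immediately after Definition~\ref{d:nu_relative}, namely that $\nu^k(u) = \nu^k(u\,|\,w_0)$ for every $k \geq 0$, and hence $\nu(u) = \nu(u\,|\,w_0)$. Consequently, if $u$ is rational, then by condition~\ref{i:rational1}) of Definition~\ref{d:rational} we have $\nu(u\,|\,w_0) = \emptyset$. (Alternatively, this is a trivial special case of the earlier proposition stating that for a rational $u$ one has $\nu(u\,|\,v) = \emptyset$ for every $v \in W$.)

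Finally, with $v := w_0$ a solution and $\nu(u\,|\,w_0) = \emptyset$, Proposition~\ref{p:relative_existence} applies and produces the desired $m \geq 0$ for which $B(g) := P_m(g) \in \mathcal{B}_+$ and $N(g) := N_m(g) \in \mathcal{N}_-$ yield the decomposition $g = N(g)B(g)\overline{u}N(g)^{-1}$ for generic $g$, showing that $u$ is a solution of Problem~\ref{problem}. There is no real obstacle here: the whole content lies in the already-established Proposition~\ref{p:relative_existence} together with the identification of $\nu(u)$ with $\nu(u\,|\,w_0)$, so the corollary is essentially a one-line deduction once those pieces are in place.
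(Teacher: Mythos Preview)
Your argument is correct and follows exactly the same route as the paper: specialize Proposition~\ref{p:relative_existence} to $v=w_0$, use that $w_0$ is a known solution, and invoke the identification $\nu(u)=\nu(u\,|\,w_0)$ to conclude. The paper's proof is simply the one-line version of what you wrote.
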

\begin{proof}
    Indeed, for rational Weyl group elements $u \in W$, $\nu(u) = \nu(u\,|\,w_0) = \emptyset$.
\end{proof}

\begin{corollary}\label{c:contrapositive}
    Assume that $u$ is not a solution of Problem~\ref{problem}. Then for any $v \in W$ such that $\nu(u\,|\,v) = \emptyset$, $v$ is not a solution of Problem~\ref{problem} as well. 
\end{corollary}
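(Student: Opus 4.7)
The plan is to observe that Corollary~\ref{c:contrapositive} is nothing more than the logical contrapositive of Proposition~\ref{p:relative_existence}. Proposition~\ref{p:relative_existence} reads, in conditional form: \emph{if} $v$ is a solution of Problem~\ref{problem} \emph{and} $\nu(u\,|\,v)=\emptyset$, \emph{then} $u$ is a solution of Problem~\ref{problem}. Negating the conclusion while keeping the hypothesis $\nu(u\,|\,v)=\emptyset$ forces the negation of the remaining hypothesis, which is exactly the conclusion of the corollary.

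Concretely, I would structure the proof as a one-sentence appeal by contradiction. Suppose that $u$ is not a solution of Problem~\ref{problem} and that $\nu(u\,|\,v)=\emptyset$, but that $v$ \emph{is} a solution of Problem~\ref{problem}. Then the hypotheses of Proposition~\ref{p:relative_existence} are satisfied: we can feed the pair $(\tilde{B},\tilde{N})$ for $v$ into the construction~\eqref{eq:pk}--\eqref{eq:nk}, and the proposition produces rational maps $B$ and $N$ witnessing the decomposition $g = N(g)B(g)\overline{u}N(g)^{-1}$ for a generic $g \in G$. This contradicts the assumption that $u$ is not a solution, so $v$ cannot be a solution either.

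There is no genuine obstacle here, since all of the substantive work -- the construction of the sequences $\{P_k\}$ and $\{N_k\}$, the parabolic approximation argument showing that $P_m(g) \in \mathcal{B}_+$ for some $m$, and the verification via Corollary~\ref{c:limit} that the stabilized limits yield the required decomposition -- was already carried out in the proof of Proposition~\ref{p:relative_existence}. The only point worth mentioning is that the hypothesis $\nu(u\,|\,v)=\emptyset$ in Proposition~\ref{p:relative_existence} is imposed without any supplementary restriction on $v$, so the proposition may be invoked as a black box for whichever $v \in W$ happens to be a solution; no extra compatibility between $u$ and $v$ is needed beyond the vanishing of the relative $\nu$-sequence.
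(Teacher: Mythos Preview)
Your proposal is correct and matches the paper's approach: the paper states the corollary with no proof at all, since it is literally the contrapositive of Proposition~\ref{p:relative_existence}, exactly as you observe. Your brief contradiction argument is a faithful spelling-out of this one-line deduction.
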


The next corollary corresponds to Statement~\ref{i:simple_nonexist1} of Proposition~\ref{p:simple_nonexist}.

\begin{corollary}\label{c:negative_notsolution}
    Let $u \in W$ be an element such that $u(\alpha) \in \Pi_-$ and $u$ is not a solution of Problem~\ref{problem}. Then $s_{\alpha}u$ is not a solution of Problem~\ref{problem} as well.
\end{corollary}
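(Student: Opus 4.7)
The plan is to apply Corollary~\ref{c:contrapositive} with $v := s_\alpha u$. This reduces the claim to verifying that $\nu(u\,|\,s_\alpha u) = \emptyset$.

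First, I would compute $\nu^0(u\,|\,s_\alpha u)$ directly from Definition~\ref{d:nu_relative}. Since $u(s_\alpha u)^{-1} = s_\alpha$ (using $s_\alpha^{-1} = s_\alpha$), we have
\begin{equation}
    \nu^0(u\,|\,s_\alpha u) = s_\alpha(\Pi_-) \cap \Pi_+.
\end{equation}
Because $\alpha$ is simple, the reflection $s_\alpha$ permutes $\Pi_- \setminus \{-\alpha\}$ within $\Pi_-$ and sends $-\alpha$ to $\alpha$, so this intersection equals the singleton $\{\alpha\}$.

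Next, I would compute $\nu^1(u\,|\,s_\alpha u) = u(\Adj\{\alpha\}) \cap \Pi_+$. Again because $\alpha$ is simple, the only positive root $\leq \alpha$ is $\alpha$ itself, so $\Adj\{\alpha\} = \{\alpha\}$. By hypothesis $u(\alpha) \in \Pi_-$, hence $u(\{\alpha\}) \cap \Pi_+ = \emptyset$, and the relative $\nu$-sequence stabilizes at the empty set. Therefore $\nu(u\,|\,s_\alpha u) = \emptyset$, and Corollary~\ref{c:contrapositive} immediately yields that $s_\alpha u$ is not a solution of Problem~\ref{problem}.

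There is no substantial obstacle here; the only thing to keep in mind is the simplicity of $\alpha$, which is needed both to guarantee $s_\alpha(\Pi_-) \cap \Pi_+ = \{\alpha\}$ and to ensure $\Adj\{\alpha\} = \{\alpha\}$. The result is essentially a direct unpacking of the relative $\nu$-sequence combined with the contrapositive form of Proposition~\ref{p:relative_existence}.
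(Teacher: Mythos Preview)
Your proof is correct and follows essentially the same approach as the paper: both compute $\nu^0(u\,|\,s_\alpha u)=\{\alpha\}$ and $\nu^1(u\,|\,s_\alpha u)=\emptyset$, then invoke Corollary~\ref{c:contrapositive}. You simply spell out in more detail why the simplicity of $\alpha$ forces $s_\alpha(\Pi_-)\cap\Pi_+=\{\alpha\}$ and $\Adj\{\alpha\}=\{\alpha\}$.
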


\begin{proof}
    Computing the relative $\nu$-sequence, we see that
    \begin{align*}
        &\nu^0(u\, | \,s_{\alpha}u) = \{\alpha\};\\
        &\nu^1(u\, | \,s_{\alpha}u) = \{u(\alpha)\} \cap \Pi_+ = \emptyset.
    \end{align*}
By Corollary~\ref{c:contrapositive}, $s_{\alpha}u$ is not a solution.
\end{proof}

\begin{remark}
    For any $u \in W$, one can also show the invariance of each term of the sequences $\{N_k : G \dashrightarrow \mathcal{N}_-\}_{k \geq 0}$ and $\{P_k : G \dashrightarrow G\}_{k \geq 0}$:
    \begin{align*}
        &P_k(ngn^{-1}) = P_k(g);\\
        &N_k(ngn^{-1}) = nN_k(g).
    \end{align*}
\end{remark}

\subsection{Proof of Proposition~\ref{p:simple_nonexist} (Nonexistence of solutions)}\label{s:nonexistence}
In this subsection, we develop methods for showing that an element $u \in W$ is not a solution of Problem~\ref{problem}. One such method can be derived from the contrapositive of Proposition~\ref{p:relative_existence} (see Corollary~\ref{c:contrapositive} and Corollary \ref{c:negative_notsolution}); the other method consists of analyzing the fibers of the pair of maps $(B_0,N_0)$ for the solution $w_0$. In particular, we prove statements \ref{i:simple_nonexist2} and \ref{i:simple_nonexist3} of Proposition~\ref{p:simple_nonexist}, which are split into three other statements: Proposition~\ref{p:fixer_notsolution}, Proposition~\ref{p:orthosimpcycle} and Proposition~\ref{p:sAnonex}. Throughout the subsection, we employ the notation that was set up in Section~\ref{s:prelimlie}.

\begin{lemma}\label{l:fibers}
    Let $u \in W$ be any Weyl group element and $\overline{u}$ be its representative. Assume that for a generic $b \in \mathcal{B}_+$, there exists $n:= n(b) \in \mathcal{N}_-$ such that $n \neq 1$ and $n^{-1}b\overline{u}n\overline{u}^{-1} \in \mathcal{B}_+$. Then $u$ is not a solution of Problem~\ref{problem}.
\end{lemma}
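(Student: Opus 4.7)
The plan is to invoke Proposition~\ref{p:nbrat}, which states that $u$ is a solution of Problem~\ref{problem} if and only if the map $\mathcal{T}_{\overline{u}} : \mathcal{N}_- \times \mathcal{B}_+ \to G$, $(n,b) \mapsto nb\overline{u}n^{-1}$, is birational. I will show that the hypothesis forces $\mathcal{T}_{\overline{u}}$ to fail generic injectivity, hence birationality, and conclude by the contrapositive of Proposition~\ref{p:nbrat}.

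First, I would unpack the hypothesis. For generic $b \in \mathcal{B}_+$, set $b' := n^{-1} b \overline{u} n \overline{u}^{-1}$, where $n := n(b) \neq 1$. The condition $b' \in \mathcal{B}_+$ can be rewritten as $b \overline{u} = n b' \overline{u} n^{-1}$, which means
\[
    \mathcal{T}_{\overline{u}}(1, b) = b \overline{u} = n b' \overline{u} n^{-1} = \mathcal{T}_{\overline{u}}(n, b').
\]
Since $n \neq 1$, this already exhibits two distinct preimages of $b\overline{u}$ under $\mathcal{T}_{\overline{u}}$.

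Next, to upgrade this to generic non-injectivity on the full domain $\mathcal{N}_- \times \mathcal{B}_+$ (rather than merely along the lower-dimensional locus $\{1\}\times \mathcal{B}_+$), I would globalize by conjugating the whole configuration by an arbitrary $n_0 \in \mathcal{N}_-$. A direct computation gives
\[
    \mathcal{T}_{\overline{u}}(n_0 n(b),\, b') = n_0 \bigl( n(b) b' \overline{u} n(b)^{-1} \bigr) n_0^{-1} = n_0 (b \overline{u}) n_0^{-1} = \mathcal{T}_{\overline{u}}(n_0, b);
\]
since $n_0 \neq n_0 n(b)$, the pairs $(n_0, b)$ and $(n_0 n(b), b')$ are distinct preimages, both lying in $\mathcal{N}_- \times \mathcal{B}_+$, for $(n_0, b)$ ranging over the dense open locus where $b$ satisfies the hypothesis. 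Hence $\mathcal{T}_{\overline{u}}$ has generic fiber of cardinality at least two, it is not generically injective, and in particular not birational.

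No substantial obstacle is expected: the correspondence $b \mapsto n(b)$ is defined on a dense open subset by the word \emph{generic} in the hypothesis, and the alternative preimage $(n_0 n(b), b')$ automatically lies in $\mathcal{N}_- \times \mathcal{B}_+$ because $b' \in \mathcal{B}_+$ is precisely what the hypothesis guarantees. The only care needed is to make sure one does not conflate fibers of $\mathcal{T}_{\overline{u}}$ with the $\mathcal{N}_-$-conjugation orbits of $b\overline{u}$ inside $\mathcal{B}_+\overline{u}$; the globalization step above handles this by producing extra preimages over a Zariski-dense subset of the codomain.
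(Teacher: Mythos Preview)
Your proof is correct and takes a genuinely different route from the paper's. You argue directly via Proposition~\ref{p:nbrat}: the hypothesis produces, for each $(n_0,b)$ in a Zariski-dense subset of $\mathcal{N}_-\times\mathcal{B}_+$, a second preimage $(n_0 n(b),b')$ under $\mathcal{T}_{\overline{u}}$, so $\mathcal{T}_{\overline{u}}$ cannot be birational. The paper instead introduces the auxiliary map $L_0:\mathcal{B}_+\overline{u}\dashrightarrow\mathcal{B}_+\overline{w_0}$, $b\overline{u}\mapsto B_0(b\overline{u})\overline{w_0}$, observes (using the invariance of $B_0$) that the hypothesis forces $L_0$ to have generic fiber of size $>1$, and then shows that if $u$ were a solution the map $L:b\overline{w_0}\mapsto B(b\overline{w_0})\overline{u}$ would be a two-sided inverse to $L_0$. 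Your argument is shorter and avoids the detour through the $w_0$-decomposition entirely; the paper's version, on the other hand, makes explicit the birational correspondence $\mathcal{B}_+\overline{u}\leftrightarrow\mathcal{B}_+\overline{w_0}$ that a solution would induce, which is conceptually informative even if not strictly needed here.

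One small point of phrasing: what you have literally shown is that a dense open subset of the \emph{domain} consists of points sharing their image with another point. This is enough to contradict birationality (a birational morphism restricts to an isomorphism over some dense open in the target, and any point of the domain lying over that open would then be the unique preimage of its image), but it is not quite the same as saying ``the generic fiber has cardinality at least two,'' which usually refers to fibers over generic points of the \emph{codomain}. The conclusion is unaffected.
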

\begin{proof}
    Indeed, pick any representative $\overline{w_0}$ of $w_0$ and the pair of maps $(B_0,N_0)$ as in \eqref{eq:w0n0}-\eqref{eq:w0b0}. Consider the rational map $L_0 : \mathcal{B}_+ \overline{u} \dashrightarrow \mathcal{B}_+ \overline{w_0}$ given by $L_0(b\overline{u}) = B_0(b\overline{u})\overline{w_0}$. 
    Observe that if $L_0(b\overline{u}) = L_0(b^\prime \overline{u})$, then 
    \begin{align*}
        &b\overline{u} = N_0(b\overline{u}) L_0(b\overline{u}) N_0(b\overline{u})^{-1},\\
        &b^\prime \overline{u} = N_0(b^\prime \overline{u}) L_0(b^\prime \overline{u}) N_0(b^\prime \overline{u})^{-1},
    \end{align*}
    and therefore, by setting $n:=N_0(b\overline{u}) N_0(b^\prime \overline{u})^{-1}$, we arrive at the relation
    \begin{equation*}
        b^\prime = n^{-1} b\overline{u}n\overline{u}^{-1} \in \mathcal{B}_+.
    \end{equation*}
    Clearly, $b^\prime \neq b$ if and only if $n \neq 1$.
    
    Now, let us show that if $u$ is a solution of Problem~\ref{problem}, then the generic fiber of $L_0$ is necessarily of size $1$. Let $N : G \dashrightarrow \mathcal{N}_-$ and $B : G \dashrightarrow \mathcal{B}_+$ be the pair of rational maps that give rise to the rational decomposition with $\overline{u}$. Define the map $L : \mathcal{B}_+\overline{w_0} \dashrightarrow \mathcal{B}_+\overline{u}$ via $L(b\overline{w_0}) = B(b\overline{w_0})\overline{u}$. Then the rational maps $L$ and $L_0$ are inverse to each other. Indeed,
    \begin{equation*}
        L_0(L(b\overline{w_0})) = B_0(B(b\overline{w_0})\overline{u})\overline{w_0} = B_0(N(b\overline{w_0})B(b\overline{w_0})\overline{u}N(b\overline{w_0})^{-1})\overline{w_0} = b\overline{w_0}
    \end{equation*}
    where we used the invariance property~\eqref{eq:binvar}, and likewise for $L(L_0(b\overline{u})) = b\overline{u}$.
\end{proof}

\begin{proposition}\label{p:fixer_notsolution}
    Let $u \in W$ be an element such that $u(\alpha) = \alpha$ for some simple root $\alpha$. Then $u$ is not a solution of Problem~\ref{problem}.
\end{proposition}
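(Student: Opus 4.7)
The strategy is to invoke Lemma~\ref{l:fibers}: it suffices to exhibit, for a generic $b \in \mathcal{B}_+$, a nonidentity element $n \in \mathcal{N}_-$ satisfying $n^{-1} b \overline{u} n \overline{u}^{-1} \in \mathcal{B}_+$. The natural candidate is $n = y_\alpha(t)$ for a suitable $t$. Since $u(\alpha) = \alpha$, conjugation by $\overline{u}$ preserves the root space $\mathfrak{g}_{-\alpha}$, hence preserves the one-parameter subgroup $y_\alpha(\mathbb{C})$. As this restricts to an automorphism of the additive group $\mathbb{C}$, it must be scaling by some $c \in \mathbb{C}^\times$, so $\overline{u} y_\alpha(t) \overline{u}^{-1} = y_\alpha(ct)$. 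With this ansatz, the condition of Lemma~\ref{l:fibers} becomes
\begin{equation}
    y_\alpha(-t)\,b\,y_\alpha(ct) \in \mathcal{B}_+.
\end{equation}

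To analyze this, I would pass to the minimal parabolic $\mathcal{P}_\alpha$ and its quotient $\mathcal{P}_\alpha / \mathcal{B}_+ \cong \mathbb{P}^1$, working in the affine chart $s \leftrightarrow y_\alpha(s)\mathcal{B}_+$, so that $\mathcal{B}_+$ itself corresponds to $0$. The simplicity of $\alpha$ enters crucially at this point: for any $\beta \in \Pi_+ \setminus \{\alpha\}$, the root $\beta - \alpha$ cannot be negative (else $\alpha = \beta + (\alpha - \beta)$ would be a nontrivial positive sum), so the commutator of $y_\alpha$ with $x_\beta$ lies in $\mathcal{N}_+$. Combining this observation with the $\SL_2$-relation~\eqref{eq:xiyicomm} and the character identity $a y_\alpha(s) = y_\alpha(a^{-\alpha} s) a$, I would show that $b$ acts on the chart by the Möbius transformation
\begin{equation}
    s \longmapsto \frac{b^{-\alpha} s}{1 + rs},
\end{equation}
where $r$ is the $x_\alpha$-coefficient of $b$; in particular $b$ fixes $0$, as it must.

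The equation $y_\alpha(-t) b y_\alpha(ct) \in \mathcal{B}_+$ then reads $b^{-\alpha} c t / (1 + rct) = t$, with unique nonzero solution $t = (b^{-\alpha} c - 1)/(rc)$. For generic $b \in \mathcal{B}_+$, the coefficient $r$ is nonzero and the codimension-one locus $\{b^{-\alpha} c = 1\}$ is avoided, so $t$ is well-defined and nonzero. Hence $n = y_\alpha(t) \neq 1$ meets the hypothesis of Lemma~\ref{l:fibers}, and $u$ is not a solution of Problem~\ref{problem}. The main technical step is the explicit form of the Möbius action of $b$, where one must verify that all ``leftover'' root-subgroup terms created by commuting $y_\alpha$ past $b$ land in $\mathcal{B}_+$; this is exactly where simplicity of $\alpha$ is used, and a non-simple fixed root would in general produce $y_\gamma$-terms (with $\gamma = \beta - \alpha$ possibly negative) obstructing the argument.
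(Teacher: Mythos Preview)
Your proposal is correct and follows essentially the same approach as the paper: both invoke Lemma~\ref{l:fibers} with the ansatz $n = y_\alpha(t)$, use that $\overline{u}$ rescales $y_\alpha$ by a constant, commute the ``rest'' of $\mathcal{N}_+$ past $y_\alpha$ using simplicity of $\alpha$ (in the paper this is the normality of the unipotent radical of $\mathcal{P}_\alpha$, phrased as ``$\alpha$ does not appear in $n'$''), apply the $\SL_2$-relation~\eqref{eq:xiyicomm}, and solve the resulting linear equation in the parameter. Your framing via the M\"obius action of $b$ on $\mathcal{P}_\alpha/\mathcal{B}_+\cong\mathbb{P}^1$ is a pleasant conceptual packaging of the same computation; up to relabelling ($t\leftrightarrow c$, $r\leftrightarrow$ the $x_\alpha$-coefficient), your formula $t=(b^{-\alpha}c-1)/(rc)$ agrees with the paper's $c=(ra^{-\alpha}-1)/(rt)$.
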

\begin{proof}
    Set $x(t):=x_{\alpha}(t)$ and $y(t):=x_{-\alpha}(t)$, $t \in \mathbb{C}$. Decompose a generic element $b \in \mathcal{B}_+$ as $b = ax_{\alpha}(t)n^\prime$ where $n^\prime \in \mathcal{N}_+$ is the product of $x_{\beta}(t_{\beta})$ for $\beta \in \Pi_+ \setminus \{\alpha\}$. Let $r \in \mathbb{C}$ be a number such that for a given representative $\overline{u}$ of $u$ and any $t \in \mathbb{C}$, $\overline{u}y(t)\overline{u}^{-1} = y(rt)$. Set
    \begin{equation}\label{eq:nfixer}
        n:=y(c), \ \ c := \frac{ra^{-\alpha}-1}{rt}.
    \end{equation}
    We claim that $nb\overline{u}n\overline{u}^{-1} \in \mathcal{B}_+$. Indeed, the inclusion equation is given by
    \begin{equation*}
        y(-c) ax(t) n^\prime y(rc) \in \mathcal{B}_+;
    \end{equation*}
    commuting $n^\prime$ with $y(rc)$ (here we use the fact that $\alpha$ does not appear in $n^\prime$) and $y(-c)$ with $a$, the equation is equivalent to 
    \begin{equation*}
        y(-a^{-\alpha}c) ax(t) y(rc).
    \end{equation*}
    Applying the relation~\eqref{eq:xiyicomm}, we can further rewrite the equation as
    \begin{equation*}
        y(-a^{-\alpha}c) y\left( \frac{rc}{1+rtc}\right) \in \mathcal{B}_+
    \end{equation*}
    or
    \begin{equation*}
        -a^{\alpha}c + \frac{rc}{1+rtc} = 0.
    \end{equation*}
    Now it is a matter of direct check that $c$ given by equation~\eqref{eq:nfixer} is a solution of the above equation. By Lemma~\ref{l:fibers}, $u$ is not a solution of Problem~\ref{problem}.
\end{proof}

\begin{proposition}\label{p:orthosimpcycle}
    Assume there is a cycle in $\Gamma(u)$ that consists of a collection of pairwise orthogonal simple roots. Then $u$ is not a solution of Problem~\ref{problem} and is not rational.
\end{proposition}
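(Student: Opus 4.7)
The plan is to invoke Lemma~\ref{l:fibers}: for a generic $b \in \mathcal{B}_+$, I will exhibit a nontrivial $n \in \mathcal{N}_-$ such that $n^{-1} b \overline{u} n \overline{u}^{-1} \in \mathcal{B}_+$. First I would unpack the hypothesis. Since each $\alpha_{i_j}$ is simple and $u^{-1}(\alpha_{i_j}) \in \Pi_+$ (because $\alpha_{i_j} \in \nu^0(u)$), the edge $\alpha_{i_j} \to \alpha_{i_{j+1}}$ in $\Gamma(u)$ means $u^{-1}(\alpha_{i_j}) \leq \alpha_{i_{j+1}}$, and the only positive root below a simple root is that simple root itself. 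Hence $u^{-1}(\alpha_{i_j}) = \alpha_{i_{j+1}}$, and $u$ cyclically permutes $\alpha_{i_1}, \ldots, \alpha_{i_k}$ (indices taken cyclically). Correspondingly, $\overline{u}\, x_{-\alpha_{i_j}}(c)\, \overline{u}^{-1} = x_{-\alpha_{i_{j-1}}}(r_j c)$ for some $r_j \in \mathbb{C}^\times$.

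Next I would take the ansatz $n := \prod_j x_{-\alpha_{i_j}}(c_j)$; the factors commute by pairwise orthogonality. Writing a generic $b$ as $b = a \cdot \prod_j x_{\alpha_{i_j}}(t_j) \cdot n'_+$ where $n'_+$ is a product of $x_\beta(\cdot)$ for $\beta \in \Pi_+ \setminus \{\alpha_{i_1}, \ldots, \alpha_{i_k}\}$, the central simplification is the combinatorial fact that $\beta - \alpha_{i_j} \notin \Pi_-$ for any such $\beta$ and simple $\alpha_{i_j}$ (same reason as above: the only positive root $\leq \alpha_{i_j}$ is $\alpha_{i_j}$ itself, which is excluded). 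Therefore commuting $n'_+$ past the $x_{-\alpha_{i_j}}$'s produces only elements of $\mathcal{N}_+$, and there is $n''_+ \in \mathcal{N}_+$ with
\begin{equation*}
    n^{-1} b \overline{u} n \overline{u}^{-1} = a \cdot \prod_j \bigl[\, x_{-\alpha_{i_j}}(-a^{\alpha_{i_j}} c_j) \, x_{\alpha_{i_j}}(t_j) \, x_{-\alpha_{i_j}}(r_{j+1} c_{j+1}) \,\bigr] \cdot n''_+.
\end{equation*}
Each bracket lies in $\phi_{i_j}(\SL_2)$, and these subgroups commute pairwise, so the product lies in $\mathcal{B}_+$ iff each bracket is upper-triangular in its respective $\SL_2$.

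Replicating the $\SL_2$-computation from the proof of Proposition~\ref{p:fixer_notsolution}, the $j$-th bracket is upper-triangular iff
\begin{equation*}
    a^{\alpha_{i_j}} c_j (1 + t_j r_{j+1} c_{j+1}) = r_{j+1} c_{j+1},
\end{equation*}
which rearranges to $c_{j+1} = M_j(c_j)$ for the M\"obius transformation $M_j(c) := a^{\alpha_{i_j}} c \,/\, (r_{j+1}(1 - a^{\alpha_{i_j}} t_j c))$ fixing $0$. The cyclic closure condition then reads $c_1 = M(c_1)$ where $M := M_k \circ \cdots \circ M_1$ is itself M\"obius and fixes $0$; we need a fixed point $\neq 0$. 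Since $M'(0) = a^{\sum_j \alpha_{i_j}} / \prod_j r_j$ and $\sum_j \alpha_{i_j} \neq 0$ (the $\alpha_{i_j}$ are distinct simple roots), $M'(0) \neq 1$ for generic $a$; and $M$ generically does not fix $\infty$ (each $M_j$ sends $\infty$ to a finite value once $t_j \neq 0$). Hence $M$ is a non-identity, non-affine M\"obius transformation, and so it has a unique second fixed point, which is finite and nonzero. This gives the required $n \neq 1$, Lemma~\ref{l:fibers} then yields that $u$ is not a solution of Problem~\ref{problem}, and non-rationality is immediate from Proposition~\ref{p:nu_nonrat} (the cycle in $\Gamma(u)$ forces $\nu(u) \neq \emptyset$).

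The hard part is the $\SL_2$-decoupling: verifying that commuting $n'_+$ past the $x_{-\alpha_{i_j}}$'s really produces no negative-root contributions, which depends crucially on the observation $\beta - \alpha_{i_j} \notin \Pi_-$ for $\beta \in \Pi_+ \setminus \{\alpha_{i_j}\}$. Once this is in place, the reduction to $k$ cyclically linked one-variable equations is mechanical, and the M\"obius fixed-point analysis is elementary.
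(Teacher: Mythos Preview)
Your proposal is correct and follows the same strategy as the paper's proof: reduce via Lemma~\ref{l:fibers} with the same ansatz $n=\prod_j y_{i_j}(c_j)$, use orthogonality and the commutation observation to decouple into $k$ copies of $\SL_2$, and arrive at the identical cyclic system $a^{\alpha_{i_j}}c_j(1+t_j r_{j+1}c_{j+1})=r_{j+1}c_{j+1}$. The only difference is in the endgame: the paper exhibits an explicit closed-form solution for the $c_j$, whereas you observe that the composite $M=M_k\circ\cdots\circ M_1$ is a M\"obius transformation fixing $0$ with $M'(0)\neq 1$ and not fixing $\infty$ for generic $b$, hence has a second finite nonzero fixed point---a tidier way to finish.
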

\begin{proof}
    Let $\Delta^\prime:=\{\alpha_{i_j} \}_{j \in I} \subseteq \Delta\cap \nu^0(u)$ be a collection of simple roots enumerated by the cyclic group $I:=\mathbb{Z}_m$, such that $a_{i_j,i_{j+1}} = 0$. We assume that $m > 1$ (the case of $m=1$ is covered by Proposition~\ref{p:fixer_notsolution}), and that there is a cycle
    \begin{equation*}
        u^{-1}(\alpha_{i_{j-1}}) \leq \alpha_{i_j}, \ \ j \in I.
    \end{equation*}
    Clearly, the above equation implies that $u(\alpha_{i_j}) = \alpha_{i_{j-1}}$. For a given representative $\overline{u}$ of $u$, let $\{g_{j} \in \mathbb{C}\}_{j\in I}$ be a collection of numbers such that
    \begin{equation*}
        \overline{u}y_{i_{j}}(t)\overline{u}^{-1} = y_{i_{j-1}}(g_{j}t), \ \ t \in \mathbb{C}.
    \end{equation*}
    Decompose a generic element $b \in \mathcal{B}_+$ as
    \begin{equation*}
        b = a \left[\prod_{j\in I} x_{i_j}(t_{j})\right]n^\prime
    \end{equation*}
    where $n^\prime$ is the product of $x_{\beta}(t_{\beta})$ for $\beta \in \Pi_+ \setminus \Delta^\prime$, in any order. Writing $n \in \mathcal{N}_-$ as
    \begin{equation*}
        n = \prod_{j \in I} y_{i_j}(c_{j}),
    \end{equation*}
    the objective is to use Lemma \ref{l:fibers} and to show that the inclusion equation $n^{-1}b\overline{u}n\overline{u}^{-1} \in \mathcal{B}_+$ has a nontrivial solution in terms of $c_j$, $j \in I$. Indeed, we see that
    \begin{equation*}\begin{split}
        n^{-1}b\overline{u}n\overline{u}^{-1} = &\left[\prod_{j \in I} y_{i_j}(-c_{j})\right]a \left[\prod_{j\in I} x_{i_j}(t_{j})\right]n^\prime\prod_{j \in I} y_{i_{j-1}}(g_{j}c_{j}) = \\ = &a\left[\prod_{j \in I} y_{i_j}(-a^{\alpha_{i_j}}c_{j})\right]\left[\prod_{j\in I} x_{i_j}(t_{j})\right]\left[\prod_{j \in I} y_{i_{j-1}}(g_{j}c_{j})\right]n^\pprime,
        \end{split}
    \end{equation*}
    where $n^{\pprime} \in \mathcal{N}_+$ is some element. Since the simple roots in $\Delta^\prime$ are pairwise orthogonal, the inclusion equation is equivalent to the equation
    \begin{equation*}
        \prod_{j \in I} y_{i_j}(-a^{\alpha_{i_j}}c_{j})x_{i_j}(t_{j})y_{i_{j}}(g_{j+1}c_{j+1}) \in \mathcal{B}_+.
    \end{equation*}
    Commuting $x_{i_j}$ with $y_{i_j}$ via~\eqref{eq:xiyicomm}, the above equation is equivalent to
    \begin{equation*}
        \prod_{j \in I} y_{i_j}(-a^{\alpha_{i_j}}c_{j})y_{i_j}\left(\frac{g_{j+1}c_{j+1}}{1+t_jg_{j+1}c_{j+1}} \right) \in \mathcal{B}_+.
    \end{equation*}
    Setting $a_j := a^{\alpha_{i_j}}$, $j \in I$, the above equation is equivalent to the system of equations
    \begin{equation*}
        -a_jc_{j} + \frac{g_{j+1}c_{j+1}}{1+t_jg_{j+1}c_{j+1}} = 0, \ \ j \in I.
    \end{equation*}
    The nontrivial solution to the above system is given as follows. Set
    \begin{equation*}
        D_j := t_j \prod_{k \neq j}g_k + t_{j+1} a_{j+1} \prod_{k \notin \{j,j+1\}} g_k + t_{j+2} a_{j+1}a_{j+2}\prod_{k \notin \{j,j+1,j+2\}} g_k + \cdots + t_{j-1} \prod_{k \neq j}a_k
    \end{equation*}
    where we take advantage of the fact that $I = \mathbb{Z}_m$, a cyclic group; then the nontrivial solution for a generic $b$ is given by
    \begin{equation*}
       c_j = \frac{\prod_{k \in I}g_k - \prod_{k \in I}a_k}{a_jg_jD_j}, \ \ j \in I.
    \end{equation*}
    By Lemma~\ref{l:fibers}, we conclude that $u$ is not a solution of Problem~\ref{problem}.
\end{proof}

\begin{proposition}\label{p:sAnonex}
    Let $u \in W$ be an element with the following properties: 1) there are simple roots $\alpha_i$ and $\alpha_j$ with $a_{ij}=a_{ji} = -1$ such that $\alpha_i,\alpha_i+\alpha_j \in \nu^0(u)$; 2) there is a $2$-cycle on the vertices $\alpha_i$ and $\alpha_i + \alpha_j$ in $\Gamma(u)$. Then $u$ is not a solution of Problem~\ref{problem}.
\end{proposition}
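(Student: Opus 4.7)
The plan is to invoke Lemma~\ref{l:fibers}: for a generic $b \in \mathcal{B}_+$, I will exhibit a nontrivial $n \in \mathcal{N}_-$ such that $n^{-1}b\overline{u}n\overline{u}^{-1} \in \mathcal{B}_+$. First, the hypothesis unpacks cleanly: since $u^{-1}(\alpha_i)$ and $u^{-1}(\alpha_i+\alpha_j)$ are both positive roots subject to $u^{-1}(\alpha_i) \leq \alpha_i+\alpha_j$ and $u^{-1}(\alpha_i+\alpha_j) \leq \alpha_i$, the only consistent possibility is $u(\alpha_i) = \alpha_i + \alpha_j$ and $u(\alpha_i+\alpha_j) = \alpha_i$, which forces $u(\alpha_j) = -\alpha_j$ (the alternative $u^{-1}(\alpha_i) = \alpha_j$ produces the non-root $2\alpha_i + \alpha_j$ and is excluded, since $a_{ij}=a_{ji}=-1$ means $\alpha_i,\alpha_j$ generate an $A_2$ sub-root-system). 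Hence $u$ restricts to the simple reflection $s_j$ on the $A_2$ subsystem $\{\pm\alpha_i, \pm\alpha_j, \pm(\alpha_i+\alpha_j)\}$.

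Following the template of Propositions~\ref{p:fixer_notsolution} and~\ref{p:orthosimpcycle}, I take the ansatz $n := y_i(c_i)\, y_j(c_j)\, y_{\alpha_i+\alpha_j}(c_{ij})$ and decompose a generic $b \in \mathcal{B}_+$ as $b = a\, x_i(t_i)\, x_j(t_j)\, x_{\alpha_i+\alpha_j}(t_{ij})\, n'$, where $n'$ is the product of $x_\beta(t_\beta)$ for $\beta \in \Pi_+ \setminus \{\alpha_i, \alpha_j, \alpha_i+\alpha_j\}$. A direct check shows that for each $\gamma \in \{\alpha_i, \alpha_j, \alpha_i+\alpha_j\}$ and $\beta \in \Pi_+ \setminus \{\gamma\}$, the element $\beta - \gamma$ is either not a root or a positive root, so commuting the $y_\gamma$-factors past $n'$ (on either side) produces only $\mathcal{N}_+$-contributions. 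Consequently, the $\mathcal{N}_-$-vanishing of $n^{-1}b\overline{u}n\overline{u}^{-1}$ reduces to a system of three polynomial equations in $(c_i, c_j, c_{ij})$ living entirely in the $A_2$ subsystem, with coefficients rational in $(a^{\alpha_i}, a^{\alpha_j}, t_i, t_j, t_{ij})$. Since $u$ acts on this subsystem as $s_j$, the problem is equivalent to the corresponding question inside $\SL_3$: whether there is a nontrivial $n \in \mathcal{N}_-$ with $n^{-1}b\dot{s}_j n\dot{s}_j^{-1} \in \mathcal{B}_+$ for generic $b \in \mathcal{B}_+ \subset \SL_3$.

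This $\SL_3$-reduction is the crux, and I verify it explicitly. Parameterizing $n \in \mathcal{N}_- \subset \SL_3$ by its three matrix entries, the vanishing of the $(3,2)$- and $(2,1)$-entries of $n^{-1}b\dot{s}_j n\dot{s}_j^{-1}$ expresses two of the unknowns as rational functions of the third, and the vanishing of the $(3,1)$-entry then reduces to a quadratic polynomial in the remaining unknown, whose constant term is a generic nonzero rational function of $b$; hence the quadratic has two nonzero roots, each yielding a nontrivial $n$. The main obstacle is that, unlike in the orthogonal case of Proposition~\ref{p:orthosimpcycle}, the Chevalley commutators $[y_{\alpha_i+\alpha_j}, x_i] \propto y_j$ and $[y_{\alpha_i+\alpha_j}, x_j] \propto y_i$ mix the three negative root directions in the $A_2$ subsystem, so they cannot be decoupled and one must handle the full $3\times 3$ system at once. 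Lifting such a nontrivial $\SL_3$-solution back to the ambient $G$ (the $\mathcal{N}_+$-contributions from $n'$ are absorbed harmlessly into $\mathcal{B}_+$) and applying Lemma~\ref{l:fibers} completes the argument.
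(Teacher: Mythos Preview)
Your approach is essentially the same as the paper's: unpack the $2$-cycle to see that $u$ restricts to $s_j$ on the $A_2$ subsystem, choose $n$ in $\mathcal{N}_-\cap A_2$, reduce the inclusion $n^{-1}b\overline{u}n\overline{u}^{-1}\in\mathcal{B}_+$ to an $\SL_3$-computation, and then appeal to Lemma~\ref{l:fibers}. The paper parametrizes $n$ as $y_i(c_1)y_j(c_2)y_i(c_3)$ and the $A_2$-part of $b$ as $x_i(t_1)x_j(t_2)x_i(t_3)$, while you use the root-vector parametrizations $y_iy_jy_{\alpha_i+\alpha_j}$ and $x_ix_jx_{\alpha_i+\alpha_j}$; these are equivalent coordinate choices on the same unipotent cells. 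The paper also records an explicit irrational solution $(c_1,c_2,c_3)$ in closed form, whereas you argue the existence abstractly.

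Two small points deserve tightening. First, your claim ``for each $\gamma\in\{\alpha_i,\alpha_j,\alpha_i+\alpha_j\}$ and $\beta\in\Pi_+\setminus\{\gamma\}$, the element $\beta-\gamma$ is either not a root or a positive root'' is false as stated (take $\gamma=\alpha_i+\alpha_j$, $\beta=\alpha_i$); what you actually need, and what is true, is the same statement restricted to $\beta\in\Pi_+\setminus\{\alpha_i,\alpha_j,\alpha_i+\alpha_j\}$, since those are the only $\beta$'s appearing in $n'$. A cleaner way to phrase your reduction is via the standard parabolic $P=L\ltimes U$ with Levi $L$ containing the $A_2$ subgroup: since $n,\overline{u}n\overline{u}^{-1}\in L$ and $U\lhd P$, membership of $n^{-1}b\overline{u}n\overline{u}^{-1}$ in $\mathcal{B}_+$ is equivalent to membership of its image in $L/Z(L)$ in $\mathcal{B}_+\cap L$. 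Second, in your $\SL_3$ endgame, remember that $n=1$ is \emph{always} a solution; after your substitution the resulting polynomial in the last unknown has degree~$3$ with a visible linear factor coming from the trivial solution, and it is the remaining quadratic that has generically nonzero constant and leading terms. You should say explicitly that the quadratic arises after removing this trivial factor.
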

\begin{proof}
    Under the above conditions, the existence of the $2$-cycle implies that $u$ acts as a simple reflection $s_j$ on the $A_2$ root system spanned by $\alpha_i$ and $\alpha_j$; that is, 
    \begin{align*}
        &u(\alpha_i) = u^{-1}(\alpha_i) = \alpha_i + \alpha_j;\\
        &u(\alpha_j) = -\alpha_j.
    \end{align*} 
    In other words, $s_ju(\alpha_i) = \alpha_i$ and $s_ju(\alpha_j) = \alpha_j$. Choose $\overline{s_i}:=\dot{s}_i$, as in~\eqref{eq:dotsi}. Set
    \begin{equation*}
        n:= y_i(c_1)y_j(c_2)y_i(c_3), \ \ c_1,c_2,c_3 \in \mathbb{C},
    \end{equation*}
    and let $r_i,r_j \in \mathbb{C}$ be the numbers such that for any $t \in \mathbb{C}$,
    \begin{align*}
        &\overline{s_ju}y_i(t)\overline{s_ju}^{-1} = y_i(r_it),\\
        &\overline{s_ju}y_j(t)\overline{s_ju}^{-1} = y_j(r_jt).
    \end{align*}
    Therefore,
    \begin{align*}
        \overline{s_ju} n \overline{s_ju}^{-1} = y_i(r_ic_1)y_j(r_jc_2)y_i(r_ic_3);
    \end{align*}
    conjugating both sides by $\overline{s_j}^{-1}(\cdot)\overline{s_j}$, we see that
    \begin{equation*}
        \overline{u} n \overline{u}^{-1} = \overline{s_j}^{-1}y_i(r_ic_1)y_j(r_jc_2)y_i(r_ic_3)\overline{s_j}.
    \end{equation*}
    The right-hand side can be easily computed in $\SL_3(\mathbb{C})$, and in the end, we arrive at
    \begin{equation*}
        \overline{u} n \overline{u}^{-1} = y_j\left(-\frac{c_1 + c_3}{r_jc_2c_3} \right)y_i(r_ir_jc_2c_3) y_j\left(\frac{c_1 + c_3}{r_jc_2c_3}\right) x_j(-r_j c_2).
    \end{equation*}
    Set 
    \begin{align*}
        &w_1:=w_1(c_1,c_2,c_3):=-\frac{c_1+c_3}{r_jc_2c_3};\\
        &w_2:=w_2(c_1,c_2,c_3):=r_ir_jc_2c_3.
    \end{align*}
    Decompose a generic element $b \in \mathcal{B}_+$ as
    \begin{equation*}
        b = ax_i(t_1)x_j(t_2)x_i(t_1)n^\prime
    \end{equation*}
    where $n^\prime$ is the product of $x_{\beta}(t_{\beta})$, $\beta \in \Pi_+ \setminus\{\alpha_i,\alpha_j,\alpha_i+\alpha_j\}$ taken in any order. Performing algebraic manipulations similar to the ones in the proof of Proposition~\ref{p:orthosimpcycle}, the inclusion equation $n^{-1}b\overline{u}n\overline{u}^{-1}\in \mathcal{B}_+$ is equivalent to the equation
    \begin{equation*}
        y_i(-a^{\alpha_i}c_3)y_j(-a^{\alpha_j}c_2)y_i(-a^{\alpha_i}c_1)x_i(t_1)x_j(t_2)x_i(t_1)y_i(w_1)y_j(w_2)y_i(-w_1) \in \mathcal{B}_+.
    \end{equation*}
    This equation can be solved for $(c_1,c_2,c_3)$ in $\SL_3(\mathbb{C})$. Let us state one of the nontrivial solutions; set
    \begin{equation*}
        \Delta:=a^{2\alpha_j}t_2^2 - 4r_ja^{\alpha_j}.
    \end{equation*}
    Then 
    \begin{align*}
        &c_1 = \frac{1}{a^{\alpha_i}(t_{1} + t_{3})};\\
        &c_2 = \frac{(t_{1} + t_{3})(2r_{i}r_{j} + a^{\alpha_i}\Delta^{1/2} + a^{\alpha_i}a^{\alpha_j}t_{2})}{2a^{\alpha_j}r_{j}(a^{\alpha_i}(t_{1} + t_{3}) + r_{i}t_{2}t_{3})};\\
        &c_3 =  \frac{-2a^{\alpha_i}r_{i}(t_{1} + t_{3}+ a^{\alpha_j}t_{1}t_{2}^2t_{3} - r_{j}(t_1+t_3)^2)}{a^{\alpha_i}(t_{1} + t_{3}) + t_{3}\Delta^{1/2}(r_{i}t_{2} t_{1} +1) + a^{\alpha_j}t_{2}(t_1-t_3)}.
    \end{align*}
    Thus the statement holds.
\end{proof}




\subsection{Proof of Theorem~\ref{thm1}}

In this subsection, we provide a proof of Theorem~\ref{thm1}. One direction of the theorem is contained in Corollary~\ref{c:rat_sol}: if $u \in W$ is rational, then $u$ is a solution of Problem~\ref{problem}. For the second part of the theorem, the case of $\ell(u) = \ell(w_0)-1$ is treated in Proposition~\ref{p:len1case}, and the case of $\ell(u) = \ell(w_0) - 2$ is treated in Proposition~\ref{p:len2case}. 

\begin{proposition}\label{p:len1case}
    An element $s_iw_0$ is rational if and only if $s_iw_0$ is a solution of Problem~\ref{problem}, if and only if $w_0(\alpha_i) \neq -\alpha_i$;
\end{proposition}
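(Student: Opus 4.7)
The plan is to show that for $u := s_i w_0$, the graph $\Gamma(u)$ consists of a single vertex $\alpha_i$, so the only possible cycle is a loop at $\alpha_i$, and that this loop is present precisely when $u$ fixes the simple root $\alpha_i$, which in turn is equivalent to $w_0(\alpha_i) = -\alpha_i$. Once this is established, both equivalences in the proposition fall out quickly from Corollary~\ref{c:rat_sol} and Proposition~\ref{p:fixer_notsolution}.

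First I would compute $\nu^0(u)$. Writing $u = vw_0$ with $v := s_i$ and taking the (trivially) reduced decomposition $v = s_i$, Remark~\ref{r:howtonu0} yields $\nu^0(u) = \{\alpha_i\}$. Hence $\Gamma(u)$ has a unique vertex, and the only cycle it could support is a loop at $\alpha_i$. By the definition of $\Gamma(u)$, this loop exists iff $u^{-1}(\alpha_i) \leq \alpha_i$. Computing $u^{-1} = w_0 s_i$, one finds $u^{-1}(\alpha_i) = w_0(-\alpha_i) = -w_0(\alpha_i)$, which is positive because $w_0(\alpha_i) \in \Pi_-$. Since $\alpha_i$ is simple, the only positive root $\leq \alpha_i$ in the standard partial order is $\alpha_i$ itself, so the loop is present iff $-w_0(\alpha_i) = \alpha_i$, i.e.\ iff $w_0(\alpha_i) = -\alpha_i$. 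Equivalently, this condition says $u(\alpha_i) = s_iw_0(\alpha_i) = \alpha_i$, that is, $u$ fixes the simple root $\alpha_i$. By Definition~\ref{d:rational}, $u$ is rational iff $w_0(\alpha_i) \neq -\alpha_i$.

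For the equivalence with being a solution of Problem~\ref{problem}, the forward direction is immediate from Corollary~\ref{c:rat_sol}. For the converse, if $u$ is not rational, then by the previous paragraph $u$ fixes the simple root $\alpha_i$, and Proposition~\ref{p:fixer_notsolution} then asserts that $u$ is not a solution. This closes the chain of equivalences.

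I do not anticipate a genuine obstacle: the argument is a brief combinatorial check combined with two already established results. The only subtlety worth being careful about is the elementary fact that $\alpha_i$ being simple forces $-w_0(\alpha_i) \leq \alpha_i$ to collapse to equality, which I would invoke without elaborate justification.
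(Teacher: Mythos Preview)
Your proposal is correct and follows essentially the same approach as the paper's own proof: identify $\nu^0(s_iw_0)=\{\alpha_i\}$, reduce rationality to the presence of a loop at $\alpha_i$, show this loop occurs iff $w_0(\alpha_i)=-\alpha_i$ (equivalently $s_iw_0$ fixes $\alpha_i$), and then close the equivalences via Corollary~\ref{c:rat_sol} and Proposition~\ref{p:fixer_notsolution}. Your treatment is slightly more explicit in justifying why $u^{-1}(\alpha_i)\leq\alpha_i$ forces equality, but otherwise the arguments coincide.
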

\begin{proof}
    We see that the rationality graph $\Gamma(s_iw_0)$ consists of a single vertex $\alpha_i$; hence, $s_iw_0$ is not rational if and only if there is a loop at $\alpha_i$, if and only if $(s_iw_0)^{-1}(\alpha_i) = \alpha_i$, if and only if $w_0(\alpha_i) = -\alpha_i$. By Proposition~\ref{p:fixer_notsolution}, if $s_iw_0(\alpha_i) = \alpha_i$, then $s_iw_0$ is not a solution; by Corollary~\ref{c:rat_sol}, if $s_iw_0$ is not a solution, it is not rational, and by the above, $w_0(\alpha_i) = -\alpha_i$.
\end{proof}

\begin{proposition}\label{p:len2case}
    For $i \neq j$, an element $s_i s_j w_0$ is rational if and only if $s_i s_j w_0$ is a solution of Problem~\ref{problem}, if and only if the following  conditions are satisfied: 1) $w_0(\alpha_j) \notin \{-\alpha_i,-\alpha_j\}$; 2) If $a_{ij} = 0$, then $w_0(-\alpha_i) \neq \alpha_i$.
\end{proposition}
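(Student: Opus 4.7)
The plan is to compute the rationality graph $\Gamma(u)$ for $u:=s_is_jw_0$ explicitly and read off when it is acyclic, and then to rule out $u$ being a solution whenever a condition fails, using Proposition~\ref{p:simple_nonexist} together with the length-one case Proposition~\ref{p:len1case}.

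Using Remark~\ref{r:howtonu0} with the reduced decomposition $uw_0^{-1}=s_is_j$, I obtain $\nu^0(u)=\{\alpha_i,\beta\}$ where $\beta:=s_i(\alpha_j)=\alpha_j-a_{ij}\alpha_i$. Let $\sigma$ denote the opposition involution, so $w_0(\alpha_k)=-\alpha_{\sigma(k)}$ for every simple $\alpha_k$. A direct calculation using $u^{-1}=w_0s_js_i$ gives
\begin{equation*}
u^{-1}(\beta)=\alpha_{\sigma(j)},\qquad u^{-1}(\alpha_i)=\alpha_{\sigma(i)}-a_{ji}\alpha_{\sigma(j)}.
\end{equation*}
Comparing these with $\alpha_i$ and $\beta$ reads off the edges of $\Gamma(u)$: in the orthogonal case $a_{ij}=0$ one has $\beta=\alpha_j$, and $\Gamma(u)$ has a cycle iff $\sigma(i)=i$, $\sigma(j)=j$, or $\sigma(j)=i$; in the non-orthogonal case $a_{ij}\neq 0$ the potential loop at $\alpha_i$ is killed by the coefficient bound $1-a_{ji}\geq 2$, and a short check (on the loop at $\beta$ and the potential $2$-cycle between $\alpha_i$ and $\beta$) shows $\Gamma(u)$ has a cycle iff $\sigma(j)\in\{i,j\}$. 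In both cases this matches exactly the failure of conditions 1 and 2, so $u$ is rational iff the conditions hold. Combined with Corollary~\ref{c:rat_sol}, this already yields the implication rational $\Rightarrow$ solution.

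For the direction solution $\Rightarrow$ conditions, I would establish the contrapositive by handling each failure mode. If $\sigma(j)=j$, Proposition~\ref{p:len1case} shows that $v:=s_jw_0$ is not a solution, and since $\sigma$ is a permutation with $i\neq j$ one has $\sigma(i)\neq j$, so $v(\alpha_i)=-s_j(\alpha_{\sigma(i)})\in\Pi_-$; Corollary~\ref{c:negative_notsolution} applied to $u=s_iv$ rules out $u$. If $\sigma(j)=i$, so $\sigma(i)=j$, then for $a_{ij}=0$ the simple roots $\alpha_i,\alpha_j\in\nu^0(u)$ form an orthogonal $2$-cycle in $\Gamma(u)$ and Proposition~\ref{p:orthosimpcycle} applies; for $a_{ij}\neq 0$, the component of the root system containing both $i$ and $j$ carries a nontrivial opposition involution, hence must be of type $A_r$, $D_r$ with $r$ odd, or $E_6$, all simply laced, forcing $a_{ij}=a_{ji}=-1$ and placing the $2$-cycle on $\alpha_i$ and $\beta=\alpha_i+\alpha_j$ into the regime of Proposition~\ref{p:sAnonex}. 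Finally, if $a_{ij}=0$ and $\sigma(i)=i$, then $a_{ji}=0$ as well and
\begin{equation*}
u(\alpha_i)=s_is_jw_0(\alpha_i)=s_is_j(-\alpha_i)=s_i(-\alpha_i)=\alpha_i,
\end{equation*}
so $u$ fixes a simple root and Proposition~\ref{p:fixer_notsolution} applies.

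The main subtlety lies in the case $\sigma(j)=i$ with $a_{ij}\neq 0$, because Proposition~\ref{p:sAnonex} is stated only for $a_{ij}=a_{ji}=-1$. The resolution is the classification-level observation that a nontrivial opposition involution occurs only in the simply laced types $A_r$ ($r\geq 2$), $D_r$ (odd $r$), and $E_6$, which automatically confines the relevant rank-two subsystem to the $A_2$ case and makes Proposition~\ref{p:sAnonex} applicable.
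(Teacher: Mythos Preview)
Your argument is correct and follows essentially the same route as the paper: compute $\nu^0(u)=\{\alpha_i,\ s_i(\alpha_j)\}$, classify the possible loops and the $2$-cycle on these two vertices, and then eliminate each failure mode using Proposition~\ref{p:fixer_notsolution}, Proposition~\ref{p:orthosimpcycle}, Proposition~\ref{p:sAnonex}, and the combination of Proposition~\ref{p:len1case} with Corollary~\ref{c:negative_notsolution}. The only cosmetic difference is organizational (you split first on $a_{ij}=0$ versus $a_{ij}\neq 0$, whereas the paper splits first by loop type) and in the justification of $a_{ij}=a_{ji}=-1$ when $\sigma(j)=i$: you invoke the classification of types with nontrivial opposition involution, while the paper uses the equivalent observation that $w_0(\alpha_j)=-\alpha_i$ forces $\alpha_i,\alpha_j$ to have equal length, hence the local rank-two type is simply laced.
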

\begin{proof}
         We see that the vertex set of $\Gamma(s_is_jw_0)$ is given by
        \begin{equation*}
            \nu^0(s_is_jw_0) = \{\alpha_i, s_i(\alpha_j)\}.
        \end{equation*}
        If $s_is_jw_0$ is not rational, then there is either a loop at $\alpha_i$, a loop at $s_i(\alpha_j)$, or a $2$-cycle between the two vertices. A simple computation shows that the loops and the $2$-cycle appear under the following circumstances:
        \begin{description}
            \item[Loop at $\alpha_i$.] That is,
            \begin{equation*}
                (s_is_jw_0)^{-1}(\alpha_i) \leq \alpha_i,
            \end{equation*}
            which is equivalent to the statement $w_0(\alpha_i) = -\alpha_i$ and $a_{ij} = 0$. Under these conditions, $s_is_jw_0(\alpha_i) = \alpha_i$; hence, by Proposition~\ref{p:fixer_notsolution}, $s_is_jw_0$ is not a solution.
            \item[Loop at $s_i(\alpha_j)$.] That is, the following equation holds:
            \begin{equation}\label{eq:sisjw0}
                (s_is_jw_0)^{-1}(s_i(\alpha_j)) \leq s_i(\alpha_j).
            \end{equation}
            Then either a) $w_0(\alpha_j)=-\alpha_j$, or b) $w_0(\alpha_j) = - \alpha_i$ and $a_{ij} \neq 0$. Indeed, using equation~\eqref{eq:simple_refl}, we can expand \eqref{eq:sisjw0} as
            \begin{equation*}
                w_0(-\alpha_j) \leq \alpha_j - a_{ij} \alpha_i,
            \end{equation*}
            and then it's easy to see the equivalence. In case of a), $s_jw_0$ is not a solution by Proposition~\ref{p:len1case}, and since $s_jw_0(\alpha_i) \in \Pi_-$, by Corollary~\ref{c:negative_notsolution}, $s_is_jw_0$ is not a solution. If b) is the case, then $\Gamma(s_is_jw_0)$ also has a $2$-cycle (see the next item).
            \item[A $2$-cycle.] This is the case if and only if $w_0(\alpha_j) = -\alpha_i$. Indeed, there is a $2$-cycle if and only if the following system of inequalities is satisfied:
            \begin{align*}
                    &(s_is_jw_0)^{-1}(\alpha_i) \leq s_i(\alpha_j);\\
                    &(s_is_jw_0)^{-1}(s_i(\alpha_j)) \leq \alpha_i.
            \end{align*}
            We see that the second equation is satisfied if and only if $w_0(\alpha_j) = -\alpha_i$, and in this case, the Lie type of $W$ is simply laced, so $a_{ij} = -a_{ji}$; hence, $-w_0(s_j(\alpha_i)) = s_i(\alpha_j)$, and the first equation is satisfied. Now, if $a_{ij} = 0$, then there is a cycle between two orthogonal simple roots, and by Proposition~\ref{p:orthosimpcycle}, $s_is_jw_0$ is not a solution. If $a_{ij} = -1$, then $s_is_jw_0$ is not a solution by Proposition~\ref{p:sAnonex}.
        \end{description}
        Thus the statement holds.
\end{proof}
\begin{corollary}
    Assume that the Lie type is such that $w_0 = -\id$, and let $u \in W$ be any element of length $\ell(u) \geq \ell(w_0) - 2$. Then $u$ is a solution of Problem~\ref{problem} if and only if $u$ is rational, if and only if $u = w_0$.
\end{corollary}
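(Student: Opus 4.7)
The plan is a direct case analysis on the three possible values $\ell(u) \in \{\ell(w_0),\ \ell(w_0)-1,\ \ell(w_0)-2\}$, reducing each to the already-proved Propositions~\ref{p:len1case} and \ref{p:len2case}. Throughout, I use two standard facts: first, $\ell(w_0 w) = \ell(w_0) - \ell(w)$ for every $w \in W$, since $w_0$ maps positive roots to negative roots; second, when $w_0 = -\id$, every simple reflection commutes with $w_0$ because $w_0 s_{\alpha_i} w_0^{-1} = s_{w_0(\alpha_i)} = s_{-\alpha_i} = s_{\alpha_i}$.

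For $\ell(u) = \ell(w_0)$ the only possibility is $u = w_0$; its rationality is immediate from $\nu^0(w_0) = w_0(\Pi_+) \cap \Pi_+ = \emptyset$, and it is a solution by Corollary~\ref{c:rat_sol} (or directly via equations~\eqref{eq:w0n0}--\eqref{eq:w0b0}).

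For $\ell(u) = \ell(w_0) - 1$, the element $w_0 u$ has length $1$, hence equals some simple reflection $s_i$; combined with the commutation $w_0 s_i = s_i w_0$, this gives $u = s_i w_0$. Proposition~\ref{p:len1case} asserts that $s_i w_0$ is rational (equivalently, a solution) iff $w_0(\alpha_i) \neq -\alpha_i$, a condition that manifestly fails when $w_0 = -\id$. So $u$ is neither rational nor a solution. Analogously, for $\ell(u) = \ell(w_0) - 2$, the element $w_0 u$ has length $2$, hence admits a reduced expression $w_0 u = s_i s_j$ with $i \neq j$; commuting $w_0$ across yields $u = s_i s_j w_0$. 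Proposition~\ref{p:len2case} requires, among other things, that $w_0(\alpha_j) \notin \{-\alpha_i, -\alpha_j\}$ for rationality or solvability; since $w_0(\alpha_j) = -\alpha_j$, this condition fails, and therefore $u$ is neither rational nor a solution.

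I do not anticipate any real obstacle: the corollary is a direct specialization of Propositions~\ref{p:len1case} and \ref{p:len2case}. The only minor point of care is the normalization $u = s_i s_j w_0$ with $s_i s_j$ reduced, which is guaranteed by $\ell(w_0 u) = 2$ together with the fact that $w_0$ is central in $W$ in the $w_0 = -\id$ setting.
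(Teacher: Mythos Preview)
Your proof is correct and follows essentially the same approach as the paper: reduce to Propositions~\ref{p:len1case} and~\ref{p:len2case} and observe that their conditions fail when $w_0 = -\id$. One minor simplification: you do not need centrality of $w_0$ to obtain the forms $u = s_i w_0$ and $u = s_i s_j w_0$, since the identity $\ell(u w_0) = \ell(w_0) - \ell(u)$ (with $u w_0$ on the left rather than $w_0 u$) gives these directly; but your argument via centrality is equally valid in the $w_0 = -\id$ setting.
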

\begin{proof}
    Indeed, these are the Lie types precisely in which $w_0(\alpha_i) = -\alpha_i$ for every simple root $\alpha_i$. The corollary follows from Proposition~\ref{p:len2case}.
\end{proof}
\begin{corollary}\label{c:typeA2}
    In type $A_2$, an element $u \in W$ is a solution of Problem~\ref{problem} if and only if $u$ is rational. In particular, the set of solutions of Problem~\ref{problem} in type $A_2$ is given by 
    \begin{equation*}
        \{w_0 = s_1s_2s_1,C:=s_1s_2,C^{-1} = s_2 s_1\}.
    \end{equation*}
\end{corollary}

 \section{Combinatorial results}\label{s:proofs2}
In this section, we provide proofs of the results from Section~\ref{s:main_results2}. Throughout the section, we assume the root system of $G$ is indecomposable.

\subsection{Proof of Theorem~\ref{thm2}}
In this subsection, we prove Theorem~\ref{thm2}. We split the statement of the theorem into three separate parts: Proposition~\ref{p:left_graph}, Corollary~\ref{c:morethanonevert} and Proposition~\ref{p:zmod2symmetry}.

\begin{lemma}
    For any $u \in W$ and any simple root $\alpha$,
    \begin{equation}\label{eq:nu0salpha}
        \nu^0(s_{\alpha}u) = s_{\alpha}(\nu^0(u)\setminus\{\alpha\}) \cup (s_{\alpha}u(\Pi_+) \cap \{\alpha\}).
    \end{equation}
\end{lemma}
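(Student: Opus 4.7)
The plan is to unwind the definition $\nu^0(w) = w(\Pi_+) \cap \Pi_+$ applied to $w = s_\alpha u$, and to exploit the well-known fact that $s_\alpha$ maps $\alpha$ to $-\alpha$ and permutes $\Pi_+ \setminus \{\alpha\}$ bijectively with itself. With this in hand, the identity reduces to a straightforward case analysis on whether a given element of $\nu^0(s_\alpha u)$ equals $\alpha$ or not, so I would prove the equality by double inclusion.

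For the inclusion $\subseteq$, I would take $\beta \in \nu^0(s_\alpha u)$, so that $\beta \in \Pi_+$ and $\beta = s_\alpha u(\gamma)$ for some $\gamma \in \Pi_+$. If $\beta = \alpha$, then $\beta$ lies in $s_\alpha u(\Pi_+) \cap \{\alpha\}$ by construction. If $\beta \neq \alpha$, then $\beta \in \Pi_+ \setminus \{\alpha\}$, and the permutation property gives $s_\alpha(\beta) \in \Pi_+ \setminus \{\alpha\}$; but $s_\alpha(\beta) = u(\gamma)$, so $u(\gamma) \in u(\Pi_+) \cap \Pi_+ = \nu^0(u)$, and moreover $u(\gamma) \neq \alpha$. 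Therefore $u(\gamma) \in \nu^0(u) \setminus \{\alpha\}$ and $\beta = s_\alpha(u(\gamma)) \in s_\alpha(\nu^0(u) \setminus \{\alpha\})$.

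For the reverse inclusion $\supseteq$, the second component $s_\alpha u(\Pi_+) \cap \{\alpha\}$ is trivially contained in $\nu^0(s_\alpha u)$ since $\alpha \in \Pi_+$. For the first component, suppose $\beta = s_\alpha(\delta)$ with $\delta \in \nu^0(u) \setminus \{\alpha\}$. Write $\delta = u(\gamma)$ with $\gamma \in \Pi_+$; since $\delta \in \Pi_+ \setminus \{\alpha\}$, the permutation property again yields $\beta = s_\alpha(\delta) \in \Pi_+$, and $\beta = s_\alpha u(\gamma)$ shows $\beta \in \nu^0(s_\alpha u)$.

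There is no real obstacle here: the statement is essentially a bookkeeping identity, and the only subtlety is remembering to pull the root $\alpha$ out as a separate case, which is exactly why the term $s_\alpha u(\Pi_+) \cap \{\alpha\}$ appears explicitly on the right-hand side (it handles the single root that $s_\alpha$ fails to keep positive). I expect this lemma to be used subsequently to track how the vertex set of $\Gamma(u)$ transforms under left multiplication by a simple reflection, which is the combinatorial engine behind the connectivity argument for $\Gamma(W)$.
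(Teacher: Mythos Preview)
Your proof is correct and follows essentially the same approach as the paper: both rest on the fact that $s_\alpha$ permutes $\Pi_+\setminus\{\alpha\}$ and sends $\alpha$ to $-\alpha$. The paper compresses the argument into a single chain of set equalities, writing $\Pi_+ = s_\alpha\bigl((\Pi_+\setminus\{\alpha\})\cup\{-\alpha\}\bigr)$ and pulling $s_\alpha$ through the intersection, while you unfold the same reasoning as an explicit double inclusion with a case split on whether $\beta=\alpha$; the content is identical.
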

\begin{proof}
    Indeed,
    \begin{equation*}\begin{split}
        \nu^0(s_{\alpha}u) = s_{\alpha}u(\Pi_+) \cap \Pi_+ = &s_{\alpha}\left( u(\Pi_+) \cap (\Pi_+\setminus \{\alpha\}\cup \{-\alpha\})\right) = \\ = &s_{\alpha}(\nu^0(u)\setminus\{\alpha\}) \cup (s_{\alpha}u(\Pi_+) \cap \{\alpha\}).
        \end{split}
    \end{equation*}
    Thus the equation holds.
\end{proof}

For the next lemma, given a simple root $\alpha \in \Delta$ and a root $\beta \in \Pi$, denote by $c_{\alpha}(\beta)$ the coefficient of $\beta$ at $\alpha$ in its expansion of simple roots; that is, $\beta = \sum_{\alpha \in \Delta} c_{\alpha}(\beta)\alpha$.

\begin{lemma}\label{l:correct_simple}
    Let $u \in W$ be a rational Weyl group element such that $u \neq w_0$. Then there exists a simple root $\alpha$ such that $u^{-1}(\alpha) > 0$ and $u(\alpha) < 0$.
\end{lemma}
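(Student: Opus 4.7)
The plan is to reformulate the statement in the language of the graph $\Gamma(u)$: a simple root $\alpha$ with $u^{-1}(\alpha)>0$ is exactly a simple vertex of $\Gamma(u)$, and the requirement $u(\alpha)<0$ will be forced by choosing $\alpha$ in the support of a source of $\Gamma(u)$. Since $u\neq w_0$, we have $\nu^0(u)=u(\Pi_+)\cap\Pi_+\neq\emptyset$, so $\Gamma(u)$ has at least one vertex; since $u$ is rational, $\Gamma(u)$ is acyclic by Definition~\ref{d:rational} and therefore admits a source $\beta\in\nu^0(u)$ (a vertex with no incoming edges, including no self-loop).

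First I will show that every $i$ with $c_{\alpha_i}(\beta)>0$ satisfies $u(\alpha_i)<0$. If instead $u(\alpha_i)>0$, then $u^{-1}(u(\alpha_i))=\alpha_i>0$ forces $u(\alpha_i)\in\nu^0(u)$, and $\alpha_i\leq\beta$ (since $\alpha_i$ lies in the support of $\beta$) produces, by Definition~\ref{d:gammau}, an edge $u(\alpha_i)\to\beta$ in $\Gamma(u)$; this contradicts the source property of $\beta$, and in the borderline case $u(\alpha_i)=\beta$ it contradicts rationality directly (loop at $\beta$).

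Next I will show that some $i$ with $c_{\alpha_i}(\beta)>0$ satisfies $u^{-1}(\alpha_i)>0$. The key input is $\beta\in\nu^0(u)$, i.e.\ $u^{-1}(\beta)>0$. Expanding $\beta=\sum_i c_{\alpha_i}(\beta)\,\alpha_i$ yields $u^{-1}(\beta)=\sum_{i:\,c_{\alpha_i}(\beta)>0}c_{\alpha_i}(\beta)\,u^{-1}(\alpha_i)$. If every $u^{-1}(\alpha_i)$ appearing here were a negative root, then this sum would be a nonzero element of the root lattice with all simple-root coefficients nonpositive; being a root, it would then have to be negative, contradicting $u^{-1}(\beta)>0$. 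Hence some $\alpha_i\in\mathrm{supp}(\beta)$ satisfies $u^{-1}(\alpha_i)>0$, and the first step gives $u(\alpha_i)<0$, so $\alpha:=\alpha_i$ is the required simple root.

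The main conceptual obstacle is that the source $\beta$ of $\Gamma(u)$ need not itself be simple, so the desired $\alpha$ must be extracted from the support of $\beta$. The argument then splits cleanly into a structural part (the source property of $\beta$ forces the descent $u(\alpha_i)<0$ for every simple $\alpha_i\leq\beta$) and a sign-theoretic part (positivity of $u^{-1}(\beta)$ prevents every such $\alpha_i$ from being a left descent of $u$); no routine calculations seem to enter.
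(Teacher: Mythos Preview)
Your proof is correct and is essentially the paper's argument recast in graph language: the paper selects $\beta$ in the last nonempty $\nu^m(u)$---which is automatically a source of $\Gamma(u)$ by Lemma~\ref{l:nuk}---uses the identical sign argument on $u^{-1}(\beta)=\sum_i c_{\alpha_i}(\beta)\,u^{-1}(\alpha_i)$ to find a simple $\alpha\le\beta$ with $u^{-1}(\alpha)>0$, and then deduces $u(\alpha)<0$ from $\nu^{m+1}(u)=\emptyset$ rather than from the source property of $\beta$. Your two steps appear in swapped order relative to the paper, but the content is the same.
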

\begin{proof}
    Since $u \neq w_0$ and $u$ is rational, there is $m \geq 1$ such that $\nu^m(u) \neq \emptyset$ and $\nu^{m+1}(u) = \emptyset$. Pick any $\beta \in \nu^m(u)$; since $u^{-1}(\beta) > 0$, there is a simple root $\alpha$ such that $\alpha \leq \beta$ and $u^{-1}(\alpha) > 0$. Hence $\alpha \in \Adj \nu^m(u)$, and since $\nu^{m+1}(u) = \emptyset$, we see that $u(\alpha) < 0$. Thus $\alpha$ is the required root.
\end{proof}


\begin{lemma}\label{l:betalambda}
    Let $\gamma \in \Pi_+$, $\alpha \in \Delta$ and $c>0$ be such that $\gamma - c\alpha \in \Pi_+$. Let $\beta \in\Pi_+$ be such that $\beta \leq \gamma$, $c_{\alpha}(\beta) = c_{\alpha}(\gamma)$ and $\beta \neq \alpha$. Then $\beta-c\alpha \in \Pi_+$.
\end{lemma}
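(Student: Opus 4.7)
The plan is to reduce the general case to $c=1$ by induction on $c$, and then prove the base case through an analysis of the $\alpha$-string through $\beta$ and $\gamma$. For the reduction, assuming the lemma for $c-1 \geq 1$, the hypotheses $\gamma,\gamma-c\alpha\in\Pi_+$ together with the $\alpha$-string property give $\gamma-(c-1)\alpha\in\Pi_+$; applying the inductive hypothesis to the triple $(\gamma,\beta,c-1)$ yields $\beta-(c-1)\alpha\in\Pi_+$. Since $\beta-(c-1)\alpha=\alpha$ would force $\beta=c\alpha$, which is not a root for $c\geq 2$ in a reduced root system, we may then apply the base case to the pair $(\gamma-(c-1)\alpha,\,\beta-(c-1)\alpha)$ and conclude $\beta-c\alpha\in\Pi_+$.

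For the base case $c=1$, write $\gamma-\beta=\sum_i m_i\alpha_i$ with support in $\Delta\setminus\{\alpha\}$. The Cartan integers $a_{\alpha,i}=\langle \alpha_i,\alpha^\vee\rangle$ are non-positive for $\alpha_i\neq\alpha$, so $\langle \gamma-\beta,\alpha^\vee\rangle\leq 0$, whence
\begin{equation}
\langle \beta,\alpha^\vee\rangle\geq\langle \gamma,\alpha^\vee\rangle.
\end{equation}
Denoting by $p_\mu,q_\mu\geq 0$ the down- and up-lengths of the $\alpha$-string through $\mu$ (so $p_\mu-q_\mu=\langle \mu,\alpha^\vee\rangle$), the hypothesis $\gamma-\alpha\in\Pi_+$ reads $p_\gamma\geq 1$; the goal is $p_\beta\geq 1$, which combined with $c_\alpha(\beta)\geq 1$ and $\beta\neq\alpha$ forces $\beta-\alpha\in\Pi_+$. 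In the simply-laced types $A$, $D$, $E$, root strings have length $\leq 2$, so $p_\gamma+q_\gamma\leq 1$; together with $p_\gamma\geq 1$ this forces $q_\gamma=0$ and $\langle \gamma,\alpha^\vee\rangle=1$. Then $\langle \beta,\alpha^\vee\rangle\geq 1$ yields $p_\beta\geq 1+q_\beta\geq 1$, as required.

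The main obstacle will be the non-simply-laced case, where $\alpha$-strings may have length up to $3$ (or $4$ in $G_2$) and $q_\gamma$ can be positive, making the direct bound inconclusive. To handle this, I would run a secondary induction on $\mathrm{ht}(\gamma-\beta)$: the standard chain-of-positive-roots lemma produces a sequence $\beta=\beta_0<\beta_1<\cdots<\beta_k=\gamma$ of positive roots with each $\beta_{l+1}-\beta_l\in\Delta\setminus\{\alpha\}$, and a downward induction on $l$ would establish $\beta_l-\alpha\in\Pi_+$. The inductive step amounts to a local rank-$2$ claim: given distinct simple roots $\alpha,\alpha_j$ and a positive root $\mu\neq\alpha$ with $\mu,\mu+\alpha_j,\mu+\alpha_j-\alpha\in\Pi_+$, show $\mu-\alpha\in\Pi_+$. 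This should follow either from a convexity property of the roots lying in the affine plane $\mu+\mathbb{Z}\alpha+\mathbb{Z}\alpha_j$, or by a case-by-case analysis of the rank-$2$ parabolic subsystem spanned by $\alpha$ and $\alpha_j$ (which is of type $A_1\times A_1$, $A_2$, $B_2/C_2$, or $G_2$) using the explicit $\alpha$- and $\alpha_j$-string data.
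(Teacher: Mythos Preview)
Your inequality $\langle\beta,\alpha^\vee\rangle\ge\langle\gamma,\alpha^\vee\rangle$ and the ensuing simply-laced argument coincide with the paper's proof. The reduction from general $c$ to $c=1$ via unbrokenness of $\alpha$-strings is a clean extra step that the paper does not isolate.

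Where you diverge is the non-simply-laced case. The paper does not attempt a uniform argument there: it disposes of $G_2$ (the hypotheses force $\beta=\gamma$), handles $F_4$ by direct inspection of the root poset, and then writes out $B_r$ explicitly in $e_i$-coordinates, listing the possible $\gamma$, $\alpha$ and $\beta$ and checking $\beta-c\alpha\in\Pi_+$ by hand; $C_r$ is declared analogous. This is pedestrian but complete in a few lines.

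Your alternative via a saturated chain $\beta=\beta_0<\cdots<\beta_k=\gamma$ and a local rank-$2$ step is a plan rather than a proof. The chain lemma is true but needs an argument (for instance: from $\langle\gamma-\beta,\gamma-\beta\rangle>0$ deduce that either some $\gamma-\alpha_i$ or some $\beta+\alpha_i$, with $\alpha_i$ in the support of $\gamma-\beta$, is again a root). More importantly, the local step---that $\mu,\mu+\alpha_j,\mu+\alpha_j-\alpha\in\Pi_+$ with $\mu\ne\alpha$ forces $\mu-\alpha\in\Pi_+$---is \emph{not} a statement about the parabolic subsystem spanned by $\alpha$ and $\alpha_j$, since $\mu$ generally lies outside $\mathbb{Z}\alpha+\mathbb{Z}\alpha_j$. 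What is really at stake is a convexity property of the set $(\mu+\mathbb{Z}\alpha+\mathbb{Z}\alpha_j)\cap\Pi$, viewed as the weight set of a (typically reducible) module over the rank-$2$ subalgebra, and this requires its own justification. Given how short the paper's type-by-type check is, that is the more economical route here.
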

\begin{proof}
    From the given conditions, observe that $\langle \beta,\alpha^{\vee} \rangle \geq \langle \gamma,\alpha^{\vee} \rangle$.
    In the simply-laced case, we see that $c = 1$, and since $\beta \neq \alpha$, the above equation implies $\langle \beta,\alpha^{\vee}\rangle = 1$; hence, $s_{\alpha}(\beta) = \beta - c\alpha \in \Pi_+$.

    For the non-simply-laced case, we consider each type separately. For $G_2$, there are no roots $\gamma$ and $\beta$ that satisfy the conditions of the lemma; for $F_4$, one can observe the root poset directly. Types $B_r$ and $C_r$ are similar in treatment, so let us only verify the type $B_r$.
    
    In type $B_r$, enumerate the simple roots so that $\alpha_r$ is the short one. Since the statement holds for simply-laced types, we assume that $c_{\alpha_r}(\gamma) \neq 0$. If $\gamma = \sum_{k=i}^{j-1}\alpha_k + 2\sum_{k=j}^r \alpha_k$ for $1 \leq i < j \leq r$, then $\alpha \in \{\alpha_i,\alpha_j\}$. If $\alpha = \alpha_i$, then $\beta = \sum_{k=i}^{t-1}\alpha_k + 2\delta \sum_{k=t}^r \alpha_k$ for some $t \in [j,r+1]$ and $\delta \in \{0,1\}$; hence, $\beta-\alpha_i \in \Pi_+$. If $\alpha = \alpha_j$, then $\beta = \sum_{k=p}^{j-1}\alpha_k + 2\sum_{k=j}^r \alpha_k$ for some $p \geq i$, so $\beta-c\alpha_i \in \Pi_+$ (note: $c \in \{1,2\}$ for $j = r$; if $j < r$, then $c = 1$). Thus the statement holds.
\end{proof}

\begin{lemma}\label{l:rat_v_lower}
    Let $u \in W$ be a rational Weyl group element such that $u \neq w_0$. For any simple root $\alpha$ such that $u^{-1}(\alpha) > 0$ and $u(\alpha) < 0$, the element $s_{\alpha}u$ is rational.
\end{lemma}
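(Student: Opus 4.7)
The aim is to show $\nu(s_\alpha u) = \emptyset$, given that $\nu(u) = \emptyset$. Writing $v := s_\alpha u$, the key combinatorial observation is that the hypothesis $u^{-1}(\alpha) > 0$ makes $\nu^0(u)$ closed under adding $\alpha$ (whenever the result is still a positive root). Indeed, if $\beta \in \nu^0(u)$ and $\beta + \alpha \in \Pi_+$, then $u^{-1}(\beta + \alpha) = u^{-1}(\beta) + u^{-1}(\alpha)$ is a sum of two positive roots; being itself a root, it is positive, so $\beta + \alpha \in \nu^0(u)$. In particular, the entire $\alpha$-string above any $\beta \in \nu^0(u)$ (within $\Pi_+$) lies in $\nu^0(u)$.

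From the preceding lemma and $u^{-1}(\alpha) > 0$ (which forces $\alpha \notin v(\Pi_+)$ since $v^{-1}(\alpha) = u^{-1}(-\alpha) < 0$), we obtain $\nu^0(v) = s_\alpha(\nu^0(u) \setminus \{\alpha\})$; in particular $\alpha \notin \nu^k(v)$ for every $k$, so $s_\alpha(\nu^k(v)) \subseteq \Pi_+$ automatically. We prove by induction on $k$ the inclusion $s_\alpha(\nu^k(v)) \subseteq \nu^k(u)$. Since $u$ is rational, $\nu^k(u) = \emptyset$ for large $k$, whence $\nu^k(v) = \emptyset$, giving $\nu(v) = \emptyset$ and the rationality of $v$.

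For the inductive step, take $\beta' \in \nu^k(v)$ and write $\beta' = v(\delta)$ with $\delta \in \Adj\nu^{k-1}(v)$, so $s_\alpha\beta' = u(\delta)$; positivity of $u(\delta)$ follows from $\beta' \neq \alpha$. To conclude $u(\delta) \in \nu^k(u)$ we must show $\delta \in \Adj \nu^{k-1}(u)$. Pick $\gamma' \in \nu^{k-1}(v)$ with $\delta \leq \gamma'$; by the inductive hypothesis, $s_\alpha\gamma' \in \nu^{k-1}(u)$. If $\langle \gamma',\alpha^\vee\rangle \leq 0$, then $\gamma' \leq s_\alpha\gamma'$ and we are done. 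Otherwise $\gamma' > s_\alpha\gamma'$ in their common $\alpha$-string, and we must promote $s_\alpha\gamma'$ up the $\alpha$-string to $\gamma'$ while remaining in $\nu^{k-1}(u)$. This requires the $\alpha$-string closure of $\nu^{k-1}(u)$, which we propagate by a parallel induction: given $\beta \in \nu^k(u)$ with $\beta + \alpha \in \Pi_+$ and $u^{-1}(\beta) \leq \zeta \in \nu^{k-1}(u)$, we must exhibit some $\zeta' \in \nu^{k-1}(u)$ dominating $u^{-1}(\beta + \alpha) = u^{-1}(\beta) + u^{-1}(\alpha)$. Lemma~\ref{l:betalambda} is invoked precisely in the delicate subcase where the $\alpha$-coefficients of $u^{-1}(\beta)$ and $\zeta$ coincide, producing the required positive root.

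The main obstacle is precisely this propagation of the $\alpha$-string closure through the $\nu$-sequence: the base-level closure uses only that $u^{-1}(\alpha) > 0$, but at higher levels one must control how $u^{-1}$ transports the $\alpha$-string structure into $\Adj\nu^{k-1}(u)$. Lemma~\ref{l:betalambda}, which relates poset comparability to $\alpha$-string membership under matching $\alpha$-coefficients, is the technical bridge that closes this nested induction.
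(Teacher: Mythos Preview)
Your argument has a genuine gap: you never use the hypothesis $u(\alpha)<0$. Since the lemma is false without it, your proof cannot be complete as written. Concretely, take $u=C=s_1s_2s_3$ in type $A_3$ and $\alpha=\alpha_2$. Then $u^{-1}(\alpha_2)=\alpha_1>0$, but $u(\alpha_2)=\alpha_3>0$, so only the first half of the hypothesis holds. One computes $\nu^0(C)=\{\alpha_2,\alpha_3,\alpha_2+\alpha_3\}$ and $\nu^1(C)=\{\alpha_3\}$. The set $\nu^1(C)$ is \emph{not} upward $\alpha_2$-string closed: $\alpha_3\in\nu^1(C)$ and $\alpha_2+\alpha_3\in\Pi_+$, yet $\alpha_2+\alpha_3\notin\nu^1(C)$. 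Thus your ``parallel induction'' fails already at $k=1$. And indeed $s_2C$ is not rational (see Proposition~\ref{p:coxeter}).

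The specific failure is the propagation step: from $u^{-1}(\beta)\leq\zeta\in\nu^{k-1}(u)$ you need $u^{-1}(\beta)+u^{-1}(\alpha)\leq\zeta'$ for some $\zeta'\in\nu^{k-1}(u)$. But $u^{-1}(\alpha)$ is a positive root that is generally not simple, so the $\alpha$-string closure of $\nu^{k-1}(u)$ (which your inductive hypothesis gives) says nothing about adding $u^{-1}(\alpha)$. Your invocation of Lemma~\ref{l:betalambda} does not bridge this: that lemma subtracts multiples of the simple root $\alpha$ from a root with matching $\alpha$-coefficient, which is unrelated to dominating $u^{-1}(\beta)+u^{-1}(\alpha)$.

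The paper's proof takes a different route that makes essential use of $u(\alpha)<0$. It argues by contradiction: a cycle $\{\theta_i=s_\alpha\gamma_i\}$ in $\Gamma(s_\alpha u)$ is transported to a cycle in $\Gamma(u)$. For indices where $u^{-1}(\gamma_{i-1})\not\leq\gamma_i$, one writes $u^{-1}(\gamma_{i-1})=\gamma_i-\lambda_i+c_i'\alpha$ with $\lambda_i\in\mathbb{Z}_{\geq0}(\Delta\setminus\{\alpha\})$ and $c_i'>0$; Lemma~\ref{l:betalambda} (applied with $\gamma=\gamma_i+c_i'\alpha$ and $\beta=u^{-1}(\gamma_{i-1})$, which share the same $\alpha$-coefficient) yields $\gamma_i-\lambda_i\in\Pi_+$. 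Applying $u$ and using $u(\alpha)<0$ gives $\gamma_{i-1}<u(\gamma_i-\lambda_i)\in\nu^0(u)$, and this auxiliary vertex is what allows the cycle to be rebuilt inside $\Gamma(u)$.
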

\begin{proof}
    Assume on the contrary that the element $s_{\alpha}u$ is not rational. Then the rationality graph $\Gamma(u)$ contains a cycle $\{\theta_i \in \nu^0(u)\}_{i \in I}$ where we index the elements by $I:=\mathbb{Z}_m$, so that
    \begin{equation}\label{eq:thetacycle}
        (s_{\alpha}u)^{-1}(\theta_{i-1}) \leq \theta_i, \ i \in I.
    \end{equation}
    Since $u^{-1}(\alpha) > 0$, from equation~\eqref{eq:nu0salpha} we see that
    \begin{equation*}
        \nu^0(s_{\alpha}u) = s_{\alpha}(\nu^0(u)\setminus \{\alpha\});
    \end{equation*}
    therefore, we can write $\theta_i = s_{\alpha}\gamma_i$ for $\gamma_i \in \nu^0(u)\setminus\{\alpha\}$, $i \in I$, and equation~\eqref{eq:thetacycle} becomes
    \begin{equation*}
        u^{-1}(\gamma_{i-1}) \leq s_{\alpha}\gamma_i, \ \ i \in I.
    \end{equation*}
    Define the following subsets of indices:
    \begin{align*}
        &I_-:= \{i \in I \ | \ u^{-1}(\gamma_{i-1}) \leq \gamma_i \};\\
        &I_+ := I \setminus I_-.
    \end{align*}
    Observe that if $s_{\alpha}\gamma_i = \gamma_i - c_i \alpha$ for some $c_i \geq 0$, then $i \in I_-$; hence, if $i \in I_+$, then necessarily $s_{\alpha}\gamma_i = \gamma_i + c_i \alpha$ for some $c_i > 0$. Therefore, there is a number $c_i^\prime$ such that $0 < c_i^\prime \leq c_i$ and $\lambda_i \in \mathbb{Z}(\Delta\setminus\{\alpha\})$ such that 
    \begin{equation*}
        u^{-1}(\gamma_{i-1}) + \lambda_i = \gamma_i + c_i^\prime \alpha.
    \end{equation*}
    By Lemma~\ref{l:betalambda}, since $\gamma_i-\lambda_i + c_i^\prime \alpha \in \Pi_+$ and $\gamma_i + c_i^\prime \alpha \in \Pi_+$, we see that\footnote{To align with the statement of Lemma~\ref{l:betalambda}, $\beta := \gamma_i - \lambda_i + c_i^\prime \alpha$ and $\gamma := \gamma_i + c_i^\prime \alpha$; so, $\gamma - c_i^\prime \alpha \in \Pi_+$. The case $\beta = \alpha$ is excluded due to the assumption $u(\alpha) < 0$ (if $\beta = \alpha$, then $\gamma_{i-1} = u(\alpha) < 0$, which is a contradiction).} $\gamma_i - \lambda_i \in \Pi_+$. Applying $u$ to the above equation, we see that
    \begin{equation*}
        \gamma_{i-1} = u(\gamma_i-\lambda_i) + c_i^\prime u(\alpha),
    \end{equation*}
    and since $u(\alpha) < 0$, we obtain the equation
    \begin{equation}\label{eq:ipluseq}
        \gamma_{i-1} < u(\gamma_i - \lambda_i).
    \end{equation}
    In particular, $u(\gamma_i - \lambda_i) \in \nu^0(u)$. We now exhibit paths in $\Gamma(u)$ which, when combined together, yield a cycle in $\Gamma(u)$.
    \begin{description}
        \item[Case $i,i+1 \in I_-$.] There is an arrow
        \begin{equation*}
            \gamma_{i} \rightarrow \gamma_{i+1}.
        \end{equation*}
        \item[Case $i \in I_-$, $i+1 \in I_+$.] It follows from equation~\eqref{eq:ipluseq} that
        \begin{equation*}
            u^{-1}(\gamma_{i-1}) \leq \gamma_i < u(\gamma_{i+1} - \lambda_{i+1}),
        \end{equation*}
        and therefore, there is a path
        \begin{equation*}
            \gamma_{i-1} \rightarrow u(\gamma_{i+1}-\lambda_{i+1}) \rightarrow \gamma_{i+1}.
        \end{equation*}
        \item[Case $i,i+1 \in I_+$.] Due to equation~\eqref{eq:ipluseq}, there are inequalities
        \begin{equation*}
            u^{-1}(u(\gamma_i-\lambda_i)) = \gamma_i-\lambda_i \leq \gamma_i < u(\gamma_{i+1}-\lambda_{i+1}),
        \end{equation*}
        and therefore, there is a path
        \begin{equation*}
            u(\gamma_i-\lambda_i) \rightarrow u(\gamma_{i+1}-\lambda_{i+1}) \rightarrow \gamma_{i+1}.
        \end{equation*}
        \item[Case $i\in I_+$, $i+1 \in I_-$.] There is an arrow
        \begin{equation*}
            u(\gamma_i-\lambda_i) \rightarrow \gamma_{i+1}.
        \end{equation*}
    \end{description}
    We thus conclude that $s_{\alpha}u$ is rational.
\end{proof}

\begin{proposition}\label{p:left_graph}
    The rationality graph $\Gamma(W)$ is connected.
\end{proposition}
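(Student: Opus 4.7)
The plan is to show that every rational $u \in W$ can be joined to $w_0$ by a path in $\Gamma(W)$, which is enough to conclude connectedness since $w_0$ is rational (it is the longest element and the basic solution of Problem~\ref{problem}). The two preceding lemmas provide exactly the step needed: Lemma~\ref{l:correct_simple} gives us a simple root $\alpha$ with $u^{-1}(\alpha) > 0$ and $u(\alpha) < 0$ whenever $u$ is rational and $u \neq w_0$, and Lemma~\ref{l:rat_v_lower} then guarantees that $s_\alpha u$ is again rational.

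The argument proceeds by descending induction on $\ell(w_0) - \ell(u)$. If $u = w_0$ there is nothing to do. Otherwise, pick $\alpha$ as above; since $u^{-1}(\alpha) > 0$, the standard length formula gives $\ell(s_\alpha u) = \ell(u) + 1$. By Lemma~\ref{l:rat_v_lower}, $s_\alpha u$ is rational, hence a vertex of $\Gamma(W)$ adjacent to $u$. By the inductive hypothesis, $s_\alpha u$ is connected to $w_0$ in $\Gamma(W)$, and appending the edge $u$--$s_\alpha u$ produces a path from $u$ to $w_0$. Since $\ell(u) \leq \ell(w_0)$ for all $u$, the induction terminates after at most $\ell(w_0) - \ell(u)$ steps.

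There is really no obstacle left at this point: all the combinatorial work has been absorbed into Lemmas~\ref{l:correct_simple} and \ref{l:rat_v_lower}. The only thing to double-check is that the two simple-root conditions $u^{-1}(\alpha) > 0$ and $u(\alpha) < 0$ provided by Lemma~\ref{l:correct_simple} are precisely what is needed both to invoke Lemma~\ref{l:rat_v_lower} and to ensure the length strictly increases, so that the induction is genuinely well-founded.
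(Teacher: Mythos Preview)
Your proof is correct and follows essentially the same route as the paper: both argue by induction on $\ell(w_0)-\ell(u)$ (the paper phrases this as induction on $\ell(v)$ where $u=vw_0$, which is the same quantity), invoking Lemma~\ref{l:correct_simple} to produce the simple root $\alpha$ and Lemma~\ref{l:rat_v_lower} to ensure $s_\alpha u$ remains rational. The only quibble is terminological: what you call ``descending induction'' on $\ell(w_0)-\ell(u)$ is just ordinary induction on that quantity, since passing to $s_\alpha u$ strictly decreases it; the logic is sound regardless.
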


\begin{proof}
    Since $w_0$ is a rational Weyl group element, it suffices to prove there is a path in $\Gamma(W)$ between any rational $u \in W$ and $w_0$. Write $u = v w_0$ for some $v \in W$, and proceed by induction on $\ell(v)$. If $\ell(v) = 0$, then $u = w_0$ is rational. For a fixed $k > 0$, assume there is a path in $\Gamma(W)$ between $u$ with $\ell(v) = k-1$ and $w_0$. By Lemma~\ref{l:correct_simple} and Lemma~\ref{l:rat_v_lower}, there exists a simple root $\alpha$ such that $v^{-1}(\alpha) < 0$ and $s_{\alpha}u$ is a rational element. But then, $\ell(s_{\alpha}v) = k - 1$, and by the assumption, there is a path between $s_{\alpha}u$ and $w_0$; thus, there is a path between $u$ and $w_0$.
\end{proof}

\begin{corollary}\label{c:morethanonevert}
    The rationality graph $\Gamma(W)$ has more than one vertex only in the following Lie types: $A_{r}$ for $r \geq 2$, $D_r$ for $r$ odd, and $E_6$.
\end{corollary}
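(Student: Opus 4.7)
The plan is to combine the connectivity of $\Gamma(W)$ (Proposition~\ref{p:left_graph}) with the characterization of neighbors of $w_0$ from Proposition~\ref{p:len1case}, and then invoke the standard classification of Lie types in which $w_0$ acts as $-\id$.

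First I would observe that by Proposition~\ref{p:left_graph}, $\Gamma(W)$ is connected, so $\Gamma(W)$ has more than one vertex if and only if $w_0$ has at least one neighbor in $\Gamma(W)$. A neighbor of $w_0$ is precisely an element of the form $s_{\alpha}w_0$, $\alpha \in \Delta$, that is rational. By Proposition~\ref{p:len1case}, $s_{\alpha}w_0$ is rational if and only if $w_0(\alpha) \neq -\alpha$. Hence the existence of a neighbor of $w_0$ is equivalent to the existence of a simple root $\alpha$ with $w_0(\alpha) \neq -\alpha$, i.e.\ to the condition $w_0 \neq -\id$.

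Next I would combine the two directions. If $w_0 = -\id$, then $w_0(\alpha) = -\alpha$ for every simple root $\alpha$, so $w_0$ has no neighbors, and connectivity forces $\Gamma(W) = \{w_0\}$. If $w_0 \neq -\id$, there is some simple root $\alpha$ with $w_0(\alpha) \neq -\alpha$; then $s_{\alpha}w_0$ is a rational element distinct from $w_0$, giving $|\Gamma(W)| \geq 2$.

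Finally I would quote the well-known classification (see, e.g., Bourbaki) of the indecomposable root systems: $w_0 = -\id$ exactly in types $A_1$, $B_r$, $C_r$, $D_r$ for $r$ even, $E_7$, $E_8$, $F_4$, $G_2$, while $w_0 \neq -\id$ precisely in types $A_r$ with $r \geq 2$, $D_r$ with $r$ odd, and $E_6$. Combining this with the previous paragraph yields the corollary. No step here is an obstacle, since the genuine content has already been established in Proposition~\ref{p:left_graph} and Proposition~\ref{p:len1case}; the corollary is essentially a direct translation.
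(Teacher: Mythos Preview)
Your proof is correct and follows essentially the same route as the paper: connectivity of $\Gamma(W)$ reduces the question to whether $w_0$ has a neighbor, and then one characterizes when $s_{\alpha}w_0$ is rational and invokes the classification of types with $w_0=-\id$. The only cosmetic difference is that you cite Proposition~\ref{p:len1case} for the equivalence ``$s_{\alpha}w_0$ rational $\Leftrightarrow$ $w_0(\alpha)\neq -\alpha$'', whereas the paper re-derives this on the spot from Proposition~\ref{p:fixedpt} (noting that $\Gamma(s_{\alpha}w_0)$ has the single vertex $\alpha$, so the only possible cycle is a loop, which occurs iff $s_{\alpha}w_0$ fixes $\alpha$).
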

\begin{proof}
    Indeed, by Proposition~\ref{p:left_graph}, the rationality graph is connected. However, for $\alpha \in \Delta$, $s_{\alpha}w_0$ is rational if and only if $w_0(\alpha) \neq -\alpha$. Indeed, if $w_0(\alpha) = -\alpha$, then $s_{\alpha}w_0(\alpha) = \alpha$, and by Proposition~\ref{p:fixedpt}, $s_{\alpha}w_0$ is not rational. This implies that $w_0$ is connected to a rational Weyl group element in $\Gamma(W)$ if and only if $w_0$ induces a nontrivial automorphism of the corresponding Dynkin diagram. It is so precisely in the Lie types from the statement of the corollary.
\end{proof}

For the next proposition, let $\epsilon : \Delta \rightarrow \Delta$ be an automorphism of the Dynkin diagram such that $w_0(\alpha) = -\epsilon(\alpha)$. Then $\epsilon$ induces an involutive automorphism of the corresponding Weyl group $W$ via $\epsilon(s_{\alpha}):=s_{\epsilon(\alpha)}$.

\begin{proposition}\label{p:zmod2symmetry}
    For any element $u \in W$, $u$ is rational if and only if $\epsilon(u)$ is rational.
\end{proposition}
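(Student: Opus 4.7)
The plan is to show that $\epsilon$ induces an isomorphism of rationality graphs $\Gamma(u) \cong \Gamma(\epsilon(u))$, after which the equivalence of rationality follows immediately from condition~\ref{i:rational2} of Definition~\ref{d:rational}.

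First, I would recall the relevant properties of $\epsilon$. As an automorphism of the Dynkin diagram, $\epsilon$ extends to a permutation of $\Pi$ that preserves the decomposition $\Pi = \Pi_+ \sqcup \Pi_-$; moreover, since $\epsilon$ merely relabels simple roots, it preserves the partial order, i.e.\ $\alpha \leq \beta$ if and only if $\epsilon(\alpha) \leq \epsilon(\beta)$. Because $\epsilon$ induces a group automorphism of $W$ via $\epsilon(s_{\alpha}) = s_{\epsilon(\alpha)}$, the actions are intertwined:
\begin{equation}
    \epsilon(w)(\epsilon(\gamma)) = \epsilon(w(\gamma)), \ \ w \in W, \ \gamma \in \Pi.
\end{equation}
In particular, this identity holds with $w$ replaced by $w^{-1}$ on both sides.

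Next, I would compute
\begin{equation}
    \nu^0(\epsilon(u)) = \epsilon(u)(\Pi_+) \cap \Pi_+ = \epsilon(u(\epsilon^{-1}(\Pi_+))) \cap \Pi_+ = \epsilon(u(\Pi_+)) \cap \Pi_+ = \epsilon(\nu^0(u)),
\end{equation}
using $\epsilon(\Pi_+) = \Pi_+$ in the final step. Hence the map $\alpha \mapsto \epsilon(\alpha)$ is a bijection $\nu^0(u) \to \nu^0(\epsilon(u))$ on vertex sets. For the edges, using the intertwining identity together with the order-preserving property of $\epsilon$:
\begin{equation}
    u^{-1}(\alpha) \leq \beta \ \Longleftrightarrow \ \epsilon(u^{-1}(\alpha)) \leq \epsilon(\beta) \ \Longleftrightarrow \ \epsilon(u)^{-1}(\epsilon(\alpha)) \leq \epsilon(\beta).
\end{equation}
Thus $\alpha \to \beta$ is an arrow in $\Gamma(u)$ if and only if $\epsilon(\alpha) \to \epsilon(\beta)$ is an arrow in $\Gamma(\epsilon(u))$, so $\epsilon$ restricts to an isomorphism of oriented graphs $\Gamma(u) \cong \Gamma(\epsilon(u))$.

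Finally, since graph isomorphism preserves the existence of cycles, $\Gamma(u)$ is acyclic if and only if $\Gamma(\epsilon(u))$ is acyclic, which by Definition~\ref{d:rational} means $u$ is rational if and only if $\epsilon(u)$ is rational. There is no serious obstacle here: the only point requiring care is the verification that $\epsilon$ acts compatibly on roots, the partial order and the Weyl group action, all of which follow formally from the fact that $\epsilon$ is a Dynkin diagram automorphism.
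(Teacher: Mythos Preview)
Your proof is correct. The underlying idea is the same as the paper's---that the Dynkin automorphism $\epsilon$ intertwines the Weyl group action on roots and preserves the partial order---but the packaging differs: the paper writes $u = vw_0$, notes $\epsilon(w_0)=w_0$, and appeals to Remark~\ref{r:howtonu0} (computing $\nu^0(u)$ from a reduced decomposition of $v$) to see that $\epsilon$ carries $\nu^0(u)$ to $\nu^0(\epsilon(u))$, leaving the edge correspondence implicit. Your argument instead works directly from the definitions $\nu^0(u)=u(\Pi_+)\cap\Pi_+$ and $\alpha\to\beta \Leftrightarrow u^{-1}(\alpha)\le\beta$, and explicitly verifies that $\epsilon$ gives an isomorphism of oriented graphs $\Gamma(u)\cong\Gamma(\epsilon(u))$. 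Your route is more self-contained and makes the edge correspondence explicit; the paper's route is terser but relies on the auxiliary remark.
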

\begin{proof}
    Equivalently, for $u \in W$, set $u = vw_0$. Since $\epsilon(w_0) = w_0$, we see that $u$ is rational if and only if $\epsilon(v)w_0$ is rational. Now the result follows from the observation that the elements of $\nu^0(u)$ can be determined via a reduced decomposition of $v$ (see Remark~\ref{r:howtonu0}).
\end{proof}

\subsection{Proof of Proposition~\ref{p:rat_coxeter} (Coxeter elements)}\label{s:rat_coxeter}

In this subsection, we prove Proposition~\ref{p:rat_coxeter}, which states that the only rational Coxeter elements are in type $A_r$, and they are of the form $C:=s_1\cdots s_r$ and $C^{-1}$. We split the proposition into three separate statement: for type $A_r$, Proposition~\ref{p:onlycoxeterA} and Proposition~\ref{p:coxeter}; for type $D_r$, Proposition~\ref{p:coxeterD}; for type $E_6$, the statement can be verified with  computer software (see Proposition~\ref{p:coxeterE6}). For all the other types, the statement is a consequence of Corollary~\ref{c:morethanonevert}.

For the next results, we model the root system of type $A_r$ in an $(r+1)$-dimensional Euclidean space $E^{r+1}$ endowed with an orthonormal basis $e_1,\ldots,e_{r+1}$. The set of positive roots is given by
\begin{equation*}
    \Pi_+ = \{e_{i} - e_{j} \ | \ 1 \leq i < j \leq r+1\},
\end{equation*}
and the set of simple roots is given by
\begin{equation*}
    \Delta = \{\alpha_i:=e_{i} - e_{i+1} \ | \ 1 \leq i \leq r\}.
\end{equation*}
Each simple reflection $s_i$ acts on $E^{r+1}$ by permuting the vectors $e_{i}$ and $e_{i+1}$ and fixing $e_j$ for $j \notin \{i,i+1\}$. The Coxeter element $C$ acts by
\begin{equation*}
    C(e_i) = \begin{cases}
        e_{i+1} &i \in \{1,\ldots,r\}\\
        e_1 &i = {r+1}.
    \end{cases}
\end{equation*}
For the next lemma, set 
\begin{align*}
    &C_1 := s_1 s_2 \cdots s_r = C;\\
    &C_2 := s_2 s_3 \cdots s_r s_1;\\
    &\vdots\\
    &C_m := s_{m} s_{m+1}\cdots s_rs_{m-1}s_{m-2}\cdots s_1;\\
    &\vdots\\
    &C_{r-1} := s_{r-1} s_r s_{r-2} s_{r-3} \cdots s_{1}\\
    &C_r := s_r s_{r-1} \cdots s_1 = C^{-1}.
\end{align*}


\begin{lemma}\label{l:coxelhighroot}
Let $c$ be a Coxeter element in type $A_r$ and $\theta$ be the highest root. Then $c(\theta) > 0$ if and only if $c \notin \{C_1,C_2,\ldots,C_r\}$. Moreover, $C_m(\theta) = -\alpha_m$, $m \in \{1,\ldots,r\}$.
\end{lemma}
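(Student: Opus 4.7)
The approach is to work with explicit coordinates and permutations. Via the embedding $W \hookrightarrow S_{r+1}$ permuting the $e_i$'s, the highest root $\theta = e_1 - e_{r+1}$ satisfies $c(\theta) = e_{c(1)} - e_{c(r+1)}$, which lies in $\Pi_-$ if and only if $c(1) > c(r+1)$. So the lemma reduces to characterizing the Coxeter elements (products of each simple reflection exactly once) whose associated permutation $c \in S_{r+1}$ satisfies $c(1) > c(r+1)$.

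The ``moreover'' clause is a direct computation. Using the factorization $C_m = (s_m s_{m+1} \cdots s_r)(s_{m-1} s_{m-2} \cdots s_1)$, the right-hand block cycles $\{e_1, \ldots, e_m\}$ by sending $e_1 \mapsto e_m$ and $e_i \mapsto e_{i-1}$ for $2 \leq i \leq m$ while fixing $e_{m+1}, \ldots, e_{r+1}$, and the left-hand block sends $e_i \mapsto e_{i+1}$ for $m \leq i \leq r$ and $e_{r+1} \mapsto e_m$ while fixing $e_1, \ldots, e_{m-1}$. Composing gives $C_m(e_1) = e_{m+1}$ and $C_m(e_{r+1}) = e_m$, hence $C_m(\theta) = -\alpha_m \in \Pi_-$.

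For the main equivalence, I will analyze $c$ as a permutation for a general Coxeter word $c = s_{i_1} s_{i_2} \cdots s_{i_r}$. Let $p_k$ denote the unique position with $i_{p_k} = k$. Tracking the orbit of $e_1$ under right-to-left application of the word, and using the fact that $e_j$ is moved only by $s_{j-1}$ and $s_j$, one finds that $e_1$ first jumps to $e_2$ when $s_1$ is applied, then to $e_3$ when $s_2$ is applied provided $p_2 < p_1$, and so on. This yields $c(1) = 1 + k_1$, where $k_1$ is the largest integer with $p_1 > p_2 > \cdots > p_{k_1}$; a symmetric analysis starting from $e_{r+1}$ yields $c(r+1) = r+1-k_2$ for the analogous $k_2$.

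The condition $c(\theta) \in \Pi_-$ then becomes $k_1 + k_2 > r$. The crucial combinatorial observation is that $\{1, \ldots, k_1\}$ and $\{r-k_2+1, \ldots, r\}$ cannot overlap in two or more indices, since $j \mapsto p_j$ would then have to be both strictly decreasing (from the first sequence) and strictly increasing (from the second) on the overlap. Hence $k_1 + k_2 \leq r+1$, forcing $k_1 + k_2 = r+1$; the single overlap at index $k_1$ pins $p_{k_1}$ as the global minimum, so $p_{k_1} = 1$ and the word begins with $s_{k_1}$. The remaining letters split into two blocks $s_{k_1-1}, s_{k_1-2}, \ldots, s_1$ and $s_{k_1+1}, s_{k_1+2}, \ldots, s_r$, each appearing in the order listed, interleaved arbitrarily. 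Since any letter $s_j$ with $j < k_1$ and any letter $s_{j'}$ with $j' > k_1$ satisfy $|j - j'| \geq 2$ and hence commute, the interleaving is immaterial, yielding $c = C_{k_1}$. The main obstacle is the orbit computation producing the clean formulas $c(1) = 1 + k_1$ and $c(r+1) = r+1-k_2$: it is elementary but requires careful bookkeeping about which simple reflections remain ``active'' after a given step. Once these formulas are in hand, the combinatorial rigidity of strictly monotone sequences together with the type-$A$ commutation relations quickly finish the proof.
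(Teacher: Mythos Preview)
Your argument is correct. The orbit computations $c(1)=1+k_1$ and $c(r+1)=r+1-k_2$ are sound: once $e_1$ has climbed to $e_{k_1+1}$ at step $p_{k_1}$, the only reflections that could move it further, namely $s_{k_1}$ and $s_{k_1+1}$, have already been applied (the latter by maximality of $k_1$), so it is frozen; the mirror argument for $e_{r+1}$ is identical. The overlap argument and the final appeal to the commutation relations $s_j s_{j'}=s_{j'}s_j$ for $|j-j'|\geq 2$ then pin down $c=C_{k_1}$ cleanly.

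This is a genuinely different route from the paper's. The paper argues by induction on $r$: using that $s_r$ commutes with every $s_j$ for $j\leq r-2$, one can always slide $s_r$ to one end of the Coxeter word, obtaining either $c=c's_r$ or $c=s_r c'$ with $c'$ a Coxeter element of $A_{r-1}$. In the first case $c(\theta)=c'(e_1-e_r)$ reduces directly to the highest root of $A_{r-1}$; in the second case $c(\theta)=s_r(e_i-e_{r+1})$ is negative only when $i=r$, forcing $c=C_r$. Your direct computation trades this recursion for explicit closed formulas for $c(1)$ and $c(r+1)$ in terms of the position function $j\mapsto p_j$; it is longer to set up but entirely self-contained, and the formulas themselves may be of independent use. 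The paper's induction is shorter on the page but leaves the identification $C_m' s_r = C_m$ (again a commutation argument) implicit.
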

\begin{proof}
    Note that the highest root $\theta$ is given by $\theta = e_1 - e_{r+1}$, and a direct check shows that $C_m(\theta) = -\alpha_m$. Let us embed the Euclidean space $E^r$ into $E^{r+1}$ as the orthogonal complement of the vector $e_{r+1}$. We proceed by induction on $r$, with cases $r = 1$ and $r=2$ being evident. A Coxeter element $c$ can be written as $s_{i_1}\cdots s_{i_{r-1}}s_r$ or as $c = s_r s_{i_1} \cdots s_{i_{r-1}}$. In the first case, we see that 
    \begin{equation*}
        s_{i_1}\cdots s_{i_{r-1}}s_r(e_1 - e_{r+1}) = s_{i_1} \cdots s_{i_{r-1}}(e_1 - e_r),
    \end{equation*}
    and hence the statement follows from the case of $A_{r-1}$. In the second case, we see that  $c(\theta) = s_r(e_i - e_{r+1})$ for some $i \in \{1,\ldots,r\}$. If $i < r$, then $c(\theta) > 0$; if $i = r$, then $c(\theta) = -\alpha_r$, and in this case, $c = s_r s_{r-1} \cdots s_1$.
\end{proof}

\begin{proposition}\label{p:onlycoxeterA}
    In type $A_{r}$, a Coxeter element $c \in W$ is rational if and only if $c \in \{C,C^{-1}\}$.
\end{proposition}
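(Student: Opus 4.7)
The plan is to handle both directions by isolating the two distinguished Coxeter elements $C$ and $C^{-1}$ and disposing of the rest via Lemma~\ref{l:coxelhighroot}, combined with one explicit $2$-cycle construction.

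For the ``if'' direction, I would compute $\nu^0(C)$ and $\nu^0(C^{-1})$ directly from the cyclic action $C(e_i) = e_{i+1}$ (indices taken mod $r+1$). One obtains
\begin{equation}
    \nu^0(C) = \{e_i - e_j : 2 \le i < j \le r+1\} \qquad \text{and} \qquad \nu^0(C^{-1}) = \{e_i - e_j : 1 \le i < j \le r\}.
\end{equation}
An edge $(e_i - e_j) \to (e_{i'} - e_{j'})$ in $\Gamma(C)$ demands $C^{-1}(e_i - e_j) = e_{i-1} - e_{j-1} \le e_{i'} - e_{j'}$, which in the $A_r$-order forces $i' \le i-1$; hence the first index strictly decreases along every directed edge, and $\Gamma(C)$ is acyclic. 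An analogous argument for $\Gamma(C^{-1})$, in which the second index strictly increases along each edge, shows $\Gamma(C^{-1})$ is acyclic. By Definition~\ref{d:rational}, both $C$ and $C^{-1}$ are rational.

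For the ``only if'' direction, fix a Coxeter element $c \notin \{C, C^{-1}\}$ and split into two subcases according to whether $c^{-1}$ lies in $\{C_1, \ldots, C_r\}$. If $c^{-1} \notin \{C_1, \ldots, C_r\}$, then Lemma~\ref{l:coxelhighroot} applied to the Coxeter element $c^{-1}$ gives $c^{-1}(\theta) > 0$, i.e.\ $\theta = c(c^{-1}(\theta)) \in \nu^0(c)$, so Proposition~\ref{p:highrootnonrat} implies $c$ is not rational. Otherwise $c = C_m^{-1}$ for some $m$; since $C_1 = C$ and $C_r = C^{-1}$ are excluded, we must have $1 < m < r$, and here I would exhibit an explicit $2$-cycle in $\Gamma(C_m^{-1})$ on the pair
\begin{equation}
\alpha := e_2 - e_{r+1}, \qquad \beta := e_1 - e_{m+1}.
\end{equation}
A direct unwinding of the defining product $C_m = s_m s_{m+1} \cdots s_r s_{m-1} \cdots s_1$ shows $C_m(e_2) = e_1$, $C_m(e_{r+1}) = e_m$, $C_m(e_1) = e_{m+1}$, and $C_m(e_{m+1}) = e_{m+2}$, so $C_m(\alpha) = e_1 - e_m$ and $C_m(\beta) = e_{m+1} - e_{m+2}$ are both positive, placing $\alpha, \beta \in \nu^0(C_m^{-1})$; the two edge conditions then reduce to $\beta - C_m(\alpha) = \alpha_m$ and $\alpha - C_m(\beta) = (e_2 - e_{m+1}) + (e_{m+2} - e_{r+1})$, each a sum of positive roots.

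The main obstacle is the subcase $c = C_m^{-1}$ with $1 < m < r$: here the highest-root test fails (indeed $c^{-1}(\theta) = C_m(\theta) = -\alpha_m < 0$), so a $2$-cycle has to be produced by hand, and the choice of $\alpha, \beta$ above is guided by the smallest nontrivial instance in $A_3$ with $m = 2$. All other pieces are routine applications of Lemma~\ref{l:coxelhighroot}, Proposition~\ref{p:highrootnonrat}, and the standard description of the dominance order on type-$A$ roots.
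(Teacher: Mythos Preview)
Your proof is correct and follows essentially the same route as the paper: the highest-root test via Lemma~\ref{l:coxelhighroot} and Proposition~\ref{p:highrootnonrat} disposes of all Coxeter elements except $C_m^{-1}$, and an explicit $2$-cycle handles $C_m^{-1}$ for $1<m<r$. The only cosmetic differences are that the paper uses the pair $\{e_1-e_{m+1},\,e_m-e_{r+1}\}$ for the $2$-cycle (yours, on $\{e_2-e_{r+1},\,e_1-e_{m+1}\}$, is equally valid), and for the ``if'' direction the paper verifies condition~\ref{i:rational1} of Definition~\ref{d:rational} by computing the full $\nu$-sequence of $C$ and then invokes the $\mathbb{Z}_2$-symmetry of Proposition~\ref{p:zmod2symmetry} for $C^{-1}$, whereas you verify condition~\ref{i:rational2} directly for both via the monotone-index argument.
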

\begin{proof}
    By Lemma~\ref{l:coxelhighroot}, we see that if $c \notin \{C_1^{-1},C_2^{-1},\ldots,C_r^{-1}\}$, then $c^{-1}(\theta) > 0$, and therefore, $\theta \in \nu^0(c)$; by Proposition~\ref{p:highrootnonrat}, $c$ is not rational.

    Assume that $c = C_m^{-1}$ for some $m \in \{2,\ldots,r-1\}$. Observe that there is a $2$-cycle in $\Gamma(c)$:
    \begin{align*}
        &(c^{-1})(e_1 - e_{m+1}) = e_{m+1}-e_{m+2} \leq e_m - e_{r+1};\\
        &(c^{-1})(e_m - e_{r+1}) = e_{m-1}-e_m \leq e_1 - e_{m+1};
    \end{align*}
    hence, $c$ is not rational.
    
    It remains to show that if $c \in \{C,C^{-1}\}$, then $c$ is rational. Observe that
    \begin{equation*}
        \nu^{k-1}(C) = \{e_i - e_j \ | \ k < i < j \leq r+1\}, \ \ k \geq 1.
    \end{equation*}
    Evidently, $\nu(C) = \emptyset$. To show that $C^{-1}$ is rational, observe that $\epsilon(C) = C^{-1}$ (the $\mathbb{Z}_2$-symmetry of $\Gamma(W)$, see Proposition~\ref{p:zmod2symmetry}), hence $C^{-1}$ is rational.
\end{proof}

\begin{proposition}\label{p:coxeter}
    In type $A_{r}$, the Coxeter elements $C$ and $C^{-1}$ are rational and have valency $1$ in the graph $\Gamma(W)$. More precisely, $C$ is connected only to the rational element $s_{r}C$, and $C^{-1}$ is connected only to the rational element $s_1 C^{-1}$.
\end{proposition}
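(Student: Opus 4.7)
The plan is to analyze the neighborhood of $C$ in $\Gamma(A_r)$ by checking rationality of $s_iC$ for each $i\in\{1,\ldots,r\}$, and then to deduce the statement for $C^{-1}$ from the $\mathbb{Z}_2$-symmetry of $\Gamma(W)$. The rationality of $C$ itself is already in Proposition~\ref{p:onlycoxeterA}, so the work is to classify which simple-reflection left multiples of $C$ remain rational.

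First I would dispose of $i\in\{1,\ldots,r-1\}$. Realizing $C$ as the long cycle $(1,2,\ldots,r+1)$, the permutation $s_iC$ fixes $i$, sends $i-1\mapsto i+1$, and sends the remaining indices by $j\mapsto j+1$ (with $r+1\mapsto 1$). A direct calculation then gives $s_iC(\alpha_i)=e_i-e_{i+2}=\alpha_i+\alpha_{i+1}$, so $\alpha_i\leq s_iC(\alpha_i)$. By Proposition~\ref{p:fixedpt}, $\Gamma(s_iC)$ contains a loop, so $s_iC$ is not rational for these $i$. Note that this argument fails exactly at $i=r$, since then $s_rC(\alpha_r)=e_r-e_1<0$.

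For $i=r$ I would show $s_rC$ is rational by an explicit computation of its $\nu$-sequence. The inverse permutation $\sigma=(s_rC)^{-1}$ fixes $r$, sends $1\mapsto r+1$ and $r+1\mapsto r-1$, and satisfies $k\mapsto k-1$ for $2\leq k\leq r-1$. The criterion $\beta=e_a-e_b\in\nu^0(s_rC)\iff\sigma(a)<\sigma(b)$ yields, after a short case check,
\begin{equation*}
\nu^0(s_rC)=\{e_a-e_b \,:\, 2\leq a\leq r-1,\ a<b\leq r+1\}.
\end{equation*}
The top element $e_2-e_{r+1}$ is the highest root of the $A_{r-1}$ subsystem spanned by $\alpha_2,\ldots,\alpha_r$, so $\Adj\nu^0(s_rC)$ consists of all positive roots with support contained in $\{\alpha_2,\ldots,\alpha_r\}$. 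Evaluating $s_rC$ on these and discarding negatives, I would verify by induction that
\begin{equation*}
\nu^k(s_rC)=\{e_a-e_b \,:\, k+2\leq a\leq r-1,\ a<b\leq r+1\},
\end{equation*}
which becomes empty once $k\geq r-2$. Hence $s_rC$ is rational.

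To finish, I would invoke the $\mathbb{Z}_2$-symmetry: for the diagram automorphism $\epsilon$ of $A_r$ with $\epsilon(\alpha_i)=\alpha_{r+1-i}$, one has $\epsilon(C)=C^{-1}$ and $\epsilon(s_rC)=s_1C^{-1}$. Since $\epsilon$ carries simple reflections to simple reflections and preserves rationality by Proposition~\ref{p:zmod2symmetry}, it induces a graph automorphism of $\Gamma(W)$, so the neighborhood description for $C^{-1}$ follows from that for $C$. The main obstacle is the induction step for $\nu^k(s_rC)$: tracking which images $(s_rC)(\beta)$ remain in $\Pi_+$ requires distinguishing the boundary cases involving the indices $r-1$, $r$, and $r+1$, though once these are enumerated the pattern stabilizes cleanly and the recursion becomes automatic.
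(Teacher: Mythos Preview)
Your proposal is correct and follows the same overall structure as the paper's proof: the cases $i\in\{1,\ldots,r-1\}$ are handled identically (compute $s_iC(\alpha_i)=\alpha_i+\alpha_{i+1}$ and invoke Proposition~\ref{p:fixedpt}), and the passage to $C^{-1}$ via the $\mathbb{Z}_2$-symmetry of Proposition~\ref{p:zmod2symmetry} is the same.

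The one genuine difference is in how you establish the rationality of $s_rC$. The paper avoids computing the $\nu$-sequence of $s_rC$ altogether: it simply observes that $C(\alpha_r)<0$ and $C^{-1}(\alpha_r)>0$, so Lemma~\ref{l:rat_v_lower} applies directly with $u=C$ and $\alpha=\alpha_r$, giving that $s_rC$ is rational in one line. Your direct computation of $\nu^k(s_rC)$ is correct and self-contained, but it duplicates work that Lemma~\ref{l:rat_v_lower} already packages; that lemma was proved precisely to make such verifications immediate along the path to $w_0$. On the other hand, your explicit description of $\nu^k(s_rC)$ gives more information than the paper's argument, should one ever need the precise shape of the sequence.
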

\begin{proof}
    The Coxeter elements $C$ and $C^{-1}$ are rational by Proposition~\ref{p:onlycoxeterA}. We see that $C(\alpha_{r}) < 0$ and $C^{-1}(\alpha_{r}) > 0$; hence, by Lemma~\ref{l:rat_v_lower}, the element $s_{r}C$ is rational. Likewise, by the same lemma, the element $s_1 C^{-1}$ is rational.

   Let us show that the valency of $C$ in $\Gamma(W)$ is equal to $1$. Consider the element $s_i C$ for $i \in \{1,\ldots,r-1\}$. We see that 
    \begin{equation*}
        s_i C(\alpha_i) = s_i(\alpha_{i+1}) = \alpha_i + \alpha_{i+1} \geq \alpha_i;
    \end{equation*}
    therefore, by Proposition~\ref{p:fixedpt}, $s_i C$ is not rational, and thus the valency of $C$ is  $1$. By the $\mathbb{Z}_2$-symmetry of $\Gamma(W)$ (see Proposition~\ref{p:zmod2symmetry}), the valency of $C^{-1}$ is also $1$.
\end{proof}

For the next results, we model the root system of type $D_r$ in an $r$-dimensional Euclidean space $E^r$ endowed with an orthonormal basis $e_1,\ldots,e_r$. Then the set of positive roots is given by
\begin{equation*}
    \Pi_+ = \{e_i - e_j \ | \ 1 \leq i < j \leq r\} \cup \{e_i + e_j \ | \ 1 \leq i < j \leq r\},
\end{equation*}
and the set of simple roots is given by
\begin{equation*}
    \Delta = \{\alpha_i:=e_i -e_{i+1} \ | \ 1 \leq i \leq r-1\}\cup\{\alpha_r:=e_{r-1} + e_r\}.
\end{equation*}
In this model, each simple reflection $s_i$ for $i \in \{1,\ldots,r-1\}$ interchanges $e_i$ and $e_{i+1}$, and leaves the other basis vectors fixed; for $s_r$, $s_r(e_{r-1}) = -e_r$.

\begin{lemma}\label{l:coxelhighrootD}
    Let $\theta$ be the highest root vector in type $D_r$. For any Coxeter element $c$, $c(\theta) > 0$.
\end{lemma}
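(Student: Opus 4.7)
The plan is to exploit the special feature of the highest root $\theta = e_1 + e_2$ of $D_r$ that only one simple reflection moves it: $s_1$ fixes $\theta$ because it swaps $e_1$ and $e_2$; each $s_i$ with $3 \leq i \leq r-1$ swaps $e_i \leftrightarrow e_{i+1}$ and so fixes both $e_1$ and $e_2$; and $s_r$ acts nontrivially only on $e_{r-1}, e_r$, hence also fixes $\theta$. Thus only $s_2$ moves $\theta$, sending it to $e_1 + e_3$. Writing the Coxeter element as $c = w_1 s_2 w_2$ where $s_2$ denotes its unique occurrence in the Coxeter word, every letter of $w_2$ fixes $\theta$, so
\begin{equation*}
c(\theta) \;=\; w_1 s_2(\theta) \;=\; w_1(e_1 + e_3).
\end{equation*}

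Next I would analyze $w_1$, which is a product of a subset of $\{s_1, s_3, s_4, \ldots, s_r\}$ with each letter used at most once. Since $s_1$ is adjacent in the Dynkin diagram of $D_r$ only to $s_2$, it commutes with each of $s_3, \ldots, s_r$, so we may factor $w_1 = w_1' s_1^{\varepsilon}$ with $\varepsilon \in \{0,1\}$ and $w_1'$ a word in $\{s_3, \ldots, s_r\}$. Because $w_1'$ fixes $e_1$ and $s_1(e_1) = e_2$, we obtain $w_1(e_1) \in \{e_1, e_2\}$. Because each $s_i$ with $3 \leq i \leq r$ preserves $\mathrm{span}(e_3, \ldots, e_r)$ and acts there as a signed permutation, so does $w_1'$; in particular $w_1(e_3) = w_1'(e_3) = \pm e_k$ for some $k \in \{3, \ldots, r\}$.

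Combining these facts, $c(\theta) = w_1(e_1) + w_1(e_3) = e_i \pm e_k$ with $i \in \{1,2\}$ and $k \in \{3, \ldots, r\}$. If the sign is $+$, then $c(\theta) = e_i + e_k$ with $i \neq k$, a positive root of $D_r$; if the sign is $-$, then $c(\theta) = e_i - e_k$ with $i < k$ (since $i \leq 2 < 3 \leq k$), again a positive root. Either way $c(\theta) > 0$, which proves the lemma.

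I do not expect a serious obstacle. The whole argument rests on the structural observation in the first paragraph; once it is in place, the rest is elementary bookkeeping. The one point worth flagging is that the large gap between the support $\{1,2\}$ of $\theta$ and the indices $\{3,\ldots,r\}$ touched by $w_1'$ is what forces both $e_i + e_k$ and $e_i - e_k$ to be positive in the final case distinction; this is precisely why the analogue fails in type $A$, where $\theta = e_1 - e_{r+1}$ spans both ``ends'' of the index set and can be sent to a negative root.
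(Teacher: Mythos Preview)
Your proof is correct. The key observation---that $s_2$ is the only simple reflection moving $\theta=e_1+e_2$ (for $r\geq 4$)---immediately reduces the problem to analyzing $w_1(e_1+e_3)$, and your factorisation $w_1=w_1's_1^{\varepsilon}$ with $w_1'$ acting as a signed permutation on $\mathrm{span}(e_3,\ldots,e_r)$ finishes it cleanly.

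The paper's proof takes a different route: it keys on the position of $s_r$ rather than $s_2$. Since $s_r$ commutes with every simple reflection except $s_{r-2}$, one can slide $s_r$ to either end of the Coxeter word; the paper then treats the two cases $c=s_{i_1}\cdots s_{i_{r-1}}s_r$ and $c=s_r s_{i_1}\cdots s_{i_{r-1}}$ separately, using that $s_1,\ldots,s_{r-1}$ act on $e_1,\ldots,e_r$ as genuine (unsigned) permutations. Your approach is more economical: by isolating the unique reflection that moves $\theta$, you avoid the case split and the somewhat implicit argument the paper uses to cover all Coxeter words. The paper's approach, on the other hand, makes visible the parallel with the type~$A$ argument in the preceding lemma. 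Both proofs implicitly require $r\geq 4$ (for $r=3$ one has $D_3\cong A_3$ and the statement fails), which is the standard convention for type~$D_r$.
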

\begin{proof}
    Note that the highest root $\theta$ in type $D_r$, in the given Euclidean model, is $\theta = e_1 + e_2$.
    Consider two cases based on the position of the simple reflection $s_r$ in a reduced decomposition of~$c$.
    \begin{description}
        \item[Case 1.] Assume that $c = s_{i_1}s_{i_2}\cdots s_{i_{r-1}}s_r$. Since $s_r(\theta) = \theta$, and since no other simple reflection can change the sign of $\theta$, we see that $c(\theta) > 0$.
        \item[Case 2.] Assume that $c = s_r s_{i_1}\cdots s_{i_{r-1}}$. Then $s_{i_1}\cdots s_{i_{r-1}}(\theta) = e_i + e_j$ for some $i \neq j$. Then $c(\theta) < 0$ if and only if $i = r-1$ and $j = r$. However, $s_{i_1}\cdots s_{i_{r-1}}(e_1) = e_{r-1}$ if and only if $(i_1,\ldots,i_{r-2},i_{r-1}) = (r-1,r-2,\ldots,1)$, but in this case, $s_{r-1}s_{r-2}\cdots s_1(e_2) = e_2$.
    \end{description}
    Thus $c(\theta) > 0$ for any Coxeter element $c$ in type $D_r$.
\end{proof}

\begin{proposition}\label{p:coxeterD}
    In type $D_r$, no Coxeter element is rational.
\end{proposition}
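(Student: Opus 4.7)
The plan is to apply Proposition~\ref{p:highrootnonrat}, which says that if the highest root $\theta$ lies in $\nu^0(u)$, then $u$ is not rational. Thus it suffices to verify that $\theta \in \nu^0(c)$ for every Coxeter element $c$ in type $D_r$.

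By definition $\nu^0(c) = c(\Pi_+) \cap \Pi_+$, and since $\theta$ is already positive, the condition $\theta \in \nu^0(c)$ is equivalent to $c^{-1}(\theta) \in \Pi_+$. Now observe that if $c = s_{i_1} s_{i_2} \cdots s_{i_r}$ is a Coxeter element, so that $\{i_1,\ldots,i_r\}$ is a permutation of $\{1,\ldots,r\}$, then $c^{-1} = s_{i_r} \cdots s_{i_1}$ is also a product of each simple reflection exactly once, hence a Coxeter element itself. Applying Lemma~\ref{l:coxelhighrootD} to $c^{-1}$ in place of $c$, we get $c^{-1}(\theta) > 0$, and so $\theta \in \nu^0(c)$. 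Proposition~\ref{p:highrootnonrat} then yields that $c$ is not rational.

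The argument parallels the one used in the proof of Proposition~\ref{p:onlycoxeterA} to rule out Coxeter elements other than $C_m^{-1}$ in type $A_r$, but the situation in type $D_r$ is simpler: Lemma~\ref{l:coxelhighrootD} holds uniformly for every Coxeter element with no exceptions, so a single invocation sweeps up all Coxeter elements at once and no auxiliary $2$-cycle analysis is needed. I do not expect any substantive obstacle; the only minor point to keep in mind is that the inverse of a Coxeter element is again a Coxeter element, which is immediate from reversing the reduced factorization.
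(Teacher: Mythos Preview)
Your proof is correct and matches the paper's approach exactly: the paper's one-line proof (``a direct consequence of Lemma~\ref{l:coxelhighrootD} and Proposition~\ref{p:highrootnonrat}'') implicitly applies the lemma to $c^{-1}$, just as you do explicitly, using that the inverse of a Coxeter element is again a Coxeter element.
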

\begin{proof}
    The statement is a direct consequence of Lemma~\ref{l:coxelhighrootD} and Proposition~\ref{p:highrootnonrat}.
\end{proof}

\begin{proposition}\label{p:coxeterE6}
In type $E_6$, no Coxeter element is rational.
\end{proposition}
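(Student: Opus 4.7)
My plan is to follow the same strategy as for type $D_r$ (Proposition~\ref{p:coxeterD}): I will establish an $E_6$-analogue of Lemma~\ref{l:coxelhighrootD}, namely that $c(\theta) > 0$ for every Coxeter element $c$ in $W(E_6)$. Applied to the Coxeter element $c^{-1}$, this gives $c^{-1}(\theta) > 0$ for every Coxeter element $c$, i.e., $\theta \in \nu^0(c)$; then Proposition~\ref{p:highrootnonrat} implies that no Coxeter element is rational.

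The essential structural input is that, in the Bourbaki numbering of $E_6$, the highest root $\theta = \alpha_1 + 2\alpha_2 + 2\alpha_3 + 3\alpha_4 + 2\alpha_5 + \alpha_6$ satisfies $\langle \theta, \alpha_i^\vee\rangle = \delta_{i,2}$, which one checks directly from the Cartan integers using the formula $(\theta,\alpha_i) = 2c_i - \sum_{j \sim i} c_j$. Equivalently, in the extended Dynkin diagram $\widetilde{E}_6$ the affine node attaches only to $\alpha_2$ (the branch node). Consequently, $s_i(\theta) = \theta$ for $i \neq 2$, while $s_2(\theta) = \theta - \alpha_2$. Now for a Coxeter element $c = s_{i_1}\cdots s_{i_6}$, where $(i_1,\ldots,i_6)$ is a permutation of $(1,\ldots,6)$, let $k$ be the position of $2$. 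Evaluating $c(\theta)$ from the inside out, the reflections $s_{i_6},\ldots,s_{i_{k+1}}$ all fix $\theta$; then $s_2(\theta) = \theta - \alpha_2$; finally, the element $w := s_{i_1}\cdots s_{i_{k-1}}$ lies in the parabolic subgroup generated by $\{s_i : i \neq 2\}$ (of type $A_5$), which fixes $\theta$. Hence $c(\theta) = w(\theta - \alpha_2) = \theta - w(\alpha_2)$.

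It remains to show that $\theta - w(\alpha_2)$ is a positive root. The key observation is that each generator $s_i$ ($i \neq 2$) of the parabolic subgroup preserves the $\alpha_2$-coefficient of every vector, since $\alpha_2$ does not appear in the simple roots indexed by $\{1,3,4,5,6\}$. Therefore $w(\alpha_2)$ has $\alpha_2$-coefficient $1$, which both forces $w(\alpha_2)$ to be a positive root (a root in an irreducible system has all simple-root coefficients of the same sign) and implies $w(\alpha_2) < \theta$ strictly in the root partial order, because $\theta$ has $\alpha_2$-coefficient $2$. Thus $\theta - w(\alpha_2)$ lies in $\mathbb{Z}_{\geq 0}\Delta \setminus \{0\}$; being equal to $w(\theta-\alpha_2)$, it is the image of a root under a Weyl group element and hence a root, so a positive root. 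The only potential obstacle is confirming these coefficient-bookkeeping claims, but all of them reduce to the single direct computation of $\langle \theta,\alpha_i^\vee\rangle$; no case analysis on the permutation $(i_1,\ldots,i_6)$ or computer enumeration is needed, and the argument is of the same length and flavor as the one for type $D_r$.
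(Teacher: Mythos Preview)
Your proof is correct and more conceptual than the paper's, which simply states that a computer check confirms $\theta \in \nu^0(c)$ for every Coxeter element $c$ in $E_6$. You instead exploit that the affine node of $\widetilde{E}_6$ attaches only to $\alpha_2$: writing $c = w\, s_2\, w'$ with $w, w'$ in the $A_5$-parabolic $\langle s_i : i \neq 2\rangle$, one gets $c(\theta) = \theta - w(\alpha_2)$, a root whose $\alpha_2$-coefficient is $2 - 1 = 1 > 0$, hence positive. This avoids any enumeration. One minor presentational point: the intermediate claim ``$w(\alpha_2) < \theta$ because $\theta$ has $\alpha_2$-coefficient $2$'' is not justified by that coefficient comparison alone (it would need the fact that $\theta$ dominates every positive root), and is in any case redundant --- your last sentence already contains a self-contained argument, since a root with one positive simple-root coefficient is positive. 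The same reasoning works verbatim in type $D_r$, where the affine node also attaches to $\alpha_2$ with $c_{\alpha_2}(\theta) = 2$, so your approach in fact gives a unified proof of Lemma~\ref{l:coxelhighrootD} and the present proposition.
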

\begin{proof}
    This can be verified with computer software via seeing, as before, that for every Coxeter element $c$ in type $E_6$ and the highest root $\theta$, $\theta \in \nu^0(c)$.
\end{proof}

\subsection{\texorpdfstring{Proof of Proposition~\ref{p:typeDrvalency1} (valency $1$ vertices in $\Gamma(D_r)$)}{Proof of Proposition~\ref{p:typeDrvalency1}}}\label{s:typeDrvalency1}
In this subsection, we construct a pair of rational elements in type $D_r$ of valency $1$ in the rationality graph $\Gamma(D_r)$. We establish some of its basic properties in Proposition~\ref{p:specialDr}, and we prove Proposition~\ref{p:typeDrvalency1} (whose more precise statement is Proposition~\ref{p:dcox}).

We represent the root system $D_r$ in a Euclidean space $E^r$ in the same way as in Section~\ref{s:rat_coxeter}. Define the element $C$ by its action upon the orthonormal basis $e_1,\ldots,e_r$, as follows:
\begin{equation*}
    C(e_i) = \begin{cases}
        -e_{r} & \text{if} \ i = 1\\
        -e_i & \text{if} \ i \in \{2,3,\ldots,r-1\}\\
        e_1 &\text{if} \ i = r.
    \end{cases}
\end{equation*}

\begin{proposition}\label{p:specialDr}
    The element $C$ belongs to $W$; moreover, its length is equal to 
    \begin{equation*}
        \ell(C) = \frac{1}{2}(r(r-1) + (r-2)(r-3)),
    \end{equation*}
    its order is equal to $4$, and its reduced decomposition can be constructed as\footnote{The square parentheses carry no extra meaning and are to be interpreted as round parentheses.}
    \begin{equation}\label{eq:cdr}
    \begin{split}
        C = &s_r (s_{r-2}s_{r-1})[s_r] (s_{r-3}s_{r-2}s_{r-1}) [s_r s_{r-2}] (s_{r-4} s_{r-3} s_{r-2} s_{r-1})\cdot \\ \cdot &[s_r s_{r-2}s_{r-3}] (s_{r-5}s_{r-3} s_{r-2}s_{r-1})\cdots [s_{r}s_{r-2} s_{r-3} \cdots s_3](s_1 s_2 \cdots s_{r-2}s_{r-1}).
        \end{split}
    \end{equation}
    Lastly, $\epsilon(C) = C^{-1}$ where $\epsilon$ is the $\mathbb{Z}_2$-symmetry of $\Gamma(D_r)$.
\end{proposition}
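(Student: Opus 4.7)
The plan is to establish each assertion about $C$ by exploiting its explicit description as a signed permutation of the orthonormal basis $e_1, \dots, e_r$. First, $C$ acts by $e_1 \mapsto -e_r$, $e_r \mapsto e_1$, and $e_i \mapsto -e_i$ for $2 \le i \le r-1$, so it is a signed permutation with $r-1$ sign changes in total; since $r$ is odd, $r-1$ is even and $C \in W(D_r)$. A direct calculation of $C^2$ yields $C^2(e_1)=-e_1$, $C^2(e_r)=-e_r$, and $C^2(e_i)=e_i$ for $2 \le i \le r-1$; hence $C^2 \neq 1$ and $C^4 = 1$, so $C$ has order $4$.

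For the length, I would count the positive roots $\alpha$ with $C(\alpha) \in \Pi_-$. Splitting $\Pi_+$ by type $e_i \pm e_j$ and subcasing by whether the extreme indices $1$ and $r$ appear, a short case analysis shows that the inversions are exactly $e_1 - e_r$, every $e_i \pm e_j$ with $2 \le i < j \le r-1$, every $e_i - e_r$ with $2 \le i \le r-1$, and every $e_1 + e_j$ with $2 \le j \le r-1$. Summing these contributions gives
\begin{equation*}
\ell(C) = 1 + 2\binom{r-2}{2} + 2(r-2) = 1 + (r-2)(r-1) = \tfrac{1}{2}\bigl(r(r-1) + (r-2)(r-3)\bigr).
\end{equation*}

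For the explicit decomposition~\eqref{eq:cdr}, I would first count its simple reflections: the expression begins with $s_r$ and then interleaves round-parenthesis blocks of sizes $2, 3, \dots, r-1$ with square-bracket blocks of sizes $1, 2, \dots, r-3$, contributing $1 + \sum_{k=2}^{r-1} k + \sum_{k=1}^{r-3} k = \ell(C)$ simple reflections. Hence reducedness is automatic once the product is shown to equal $C$. I would verify this identity of signed permutations either by applying the product to each basis vector $e_i$ from right to left and tracking the resulting permutation with signs, or by induction on odd $r$ (base case $r=5$ checked by direct expansion), peeling off the opening $s_r(s_{r-2}s_{r-1})[s_r]$ factor together with the terminal round block $(s_1\cdots s_{r-1})$ and exhibiting the middle as the analogous expression in the $D_{r-2}$ subsystem on $e_2, \dots, e_{r-1}$.

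Finally, for $\epsilon(C) = C^{-1}$: the non-trivial Dynkin automorphism $\epsilon$ of $D_r$ with $r$ odd swaps $\alpha_{r-1}$ and $\alpha_r$ and fixes the other simple roots, and in the Euclidean model this is realized as conjugation by the involution $\tau$ defined by $\tau(e_r) = -e_r$ and $\tau(e_i) = e_i$ for $i < r$. A direct evaluation on basis vectors gives $\tau C \tau(e_1) = \tau(-e_r) = e_r = C^{-1}(e_1)$, $\tau C \tau(e_r) = \tau C(-e_r) = \tau(-e_1) = -e_1 = C^{-1}(e_r)$, and $\tau C \tau(e_i) = \tau(-e_i) = -e_i = C^{-1}(e_i)$ for $2 \le i \le r-1$; hence $\epsilon(C) = C^{-1}$. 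The main obstacle is the verification of~\eqref{eq:cdr}: although the length count makes reducedness a formal consequence of the product identity, pinning down the product itself requires patient bookkeeping, most cleanly carried out either by tracing the action on basis vectors or by a well-chosen induction on $r$.
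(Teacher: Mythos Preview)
Your proposal is correct and follows the same ``direct computation in the orthonormal basis'' approach that the paper invokes; the paper's own proof consists of a single sentence deferring everything to such a computation, and you have simply written out the details (membership in $W(D_r)$ via the parity of sign changes, order via $C^2$, length via an inversion count, and reducedness via a length match plus a product identity).

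The only point of divergence is the verification of $\epsilon(C)=C^{-1}$. The paper proposes to apply $\epsilon$ to the reduced word~\eqref{eq:cdr} (swapping every $s_{r-1}$ with $s_r$) and then multiply $C\cdot\epsilon(C)$ out to the identity, whereas you realize $\epsilon$ as conjugation by the sign-change $\tau\colon e_r\mapsto -e_r$ and check $\tau C\tau = C^{-1}$ on basis vectors. Your route is cleaner and, unlike the paper's, does not logically depend on first having established the explicit reduced decomposition; the paper's route has the minor advantage of staying purely at the level of Coxeter words.
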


\begin{proof}
    The formulas can be verified via a direct computation in the orthonormal basis $e_1,\ldots,e_r$. To verify that $C \cdot \epsilon(C) = \id$, one computes directly the product using the decomposition~\eqref{eq:cdr}.
\end{proof}

\begin{proposition}\label{p:dcox}
    In type $D_r$ for $r \geq 5$ odd, the elements $C$ and $C^{-1}$ are rational and have valency~$1$ in the rationality graph $\Gamma(D_r)$. More precisely, $C$ is connected only to the rational element $s_{r-1}C$, and $C^{-1}$ is connected only to the rational element $s_rC^{-1}$.
\end{proposition}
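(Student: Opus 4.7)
My plan is to establish rationality of $C$ by a direct computation of the rationality graph $\Gamma(C)$ in the orthonormal basis $e_1, \ldots, e_r$, then rule out rationality of all neighbors $s_i C$ with $i \neq r-1$ via Proposition~\ref{p:fixedpt}, handle the remaining neighbor $s_{r-1} C$ via Lemma~\ref{l:rat_v_lower}, and finally transfer the result to $C^{-1}$ using the $\mathbb{Z}_2$-symmetry of Proposition~\ref{p:zmod2symmetry}.

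For the rationality of $C$, using the defining action $C(e_1) = -e_r$, $C(e_i) = -e_i$ for $2 \leq i \leq r-1$, $C(e_r) = e_1$, a case check on positive roots $e_i \pm e_j$ yields
\[ \nu^0(C) = \{e_1 - e_l \ | \ 2 \leq l \leq r\} \cup \{e_k - e_r \ | \ 2 \leq k \leq r-1\}. \]
Each element of $\nu^0(C)$ has $\alpha_r$-coefficient zero in its simple-root expansion. For $\alpha = e_1 - e_l \in \nu^0(C)$, one computes $C^{-1}(\alpha) = e_l + e_r$ (or $e_1 + e_r$ when $l = r$), which has $\alpha_r$-coefficient $1$; hence no $\beta \in \nu^0(C)$ satisfies $C^{-1}(\alpha) \leq \beta$, and these vertices are sinks of $\Gamma(C)$. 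For $\alpha = e_k - e_r$ with $k \geq 2$, $C^{-1}(\alpha) = e_1 - e_k = \alpha_1 + \cdots + \alpha_{k-1}$, and the $\beta \in \nu^0(C)$ with $C^{-1}(\alpha) \leq \beta$ are precisely those of the form $e_1 - e_m$ with $m \geq k$. Thus $\Gamma(C)$ is bipartite between sources $\{e_k - e_r\}$ and sinks $\{e_1 - e_l\}$, hence acyclic, and $C$ is rational by Definition~\ref{d:rational}.

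For the valency, I examine each $s_i C$ in turn. When $i \in \{2, \ldots, r-2\}$, the identity $C(\alpha_i) = -\alpha_i$ gives $s_i C(\alpha_i) = \alpha_i$, producing a loop in $\Gamma(s_i C)$, so Proposition~\ref{p:fixedpt} excludes rationality. When $i = 1$ (respectively $i = r$), the computation $s_1 C(\alpha_1) = s_1(e_2 - e_r) = e_1 - e_r \geq \alpha_1$ (respectively $s_r C(\alpha_r) = s_r(e_1 - e_{r-1}) = e_1 + e_r \geq \alpha_r$) again produces a loop. When $i = r-1$, we have $C(\alpha_{r-1}) = -(e_1 + e_{r-1}) \in \Pi_-$ and $C^{-1}(\alpha_{r-1}) = e_1 - e_{r-1} \in \Pi_+$, so Lemma~\ref{l:rat_v_lower} furnishes rationality of $s_{r-1} C$. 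Hence $C$ has valency $1$ with unique neighbor $s_{r-1} C$.

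For $C^{-1}$, I invoke Proposition~\ref{p:zmod2symmetry}. Since $\epsilon(C) = C^{-1}$ by Proposition~\ref{p:specialDr} and $\epsilon$ swaps $\alpha_{r-1}$ and $\alpha_r$ in type $D_r$ for $r$ odd, the unique rational neighbor of $C^{-1}$ is $\epsilon(s_{r-1} C) = s_r C^{-1}$. The only delicate point of the whole argument is the edge analysis of $\Gamma(C)$, where one must track the $\alpha_r$-coefficient to rule out long roots as out-neighbors of the sources; everything else reduces to routine orthonormal-basis computation.
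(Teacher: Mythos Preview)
Your proof is correct and follows essentially the same approach as the paper. The only cosmetic difference is that you verify rationality of $C$ by showing $\Gamma(C)$ is bipartite (hence acyclic), whereas the paper computes the $\nu$-sequence $\nu^0(C)\supsetneq\nu^1(C)\supsetneq\nu^2(C)=\emptyset$ directly; these are the two equivalent characterizations of Definition~\ref{d:rational}. Your treatment of the case $i=1$ (using $s_1C(\alpha_1)=e_1-e_r\geq\alpha_1$ rather than a genuine fixed point) is in fact slightly cleaner than the paper's, which lumps $i=1$ together with $i\in\{2,\dots,r-2\}$ as a fixed-point case even though $s_1C(\alpha_1)\neq\alpha_1$.
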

\begin{proof}
    From the $\nu$-sequence for $C$, we see that
    \begin{align*}
        &\nu^0(C) = \{e_1 - e_i \ | \ i \in \{2,\ldots,r-1\} \} \cup \{e_i-e_r \ | \ i \in \{1,\ldots,r-1\}\};\\
        &\nu^1(C) = \{e_i-e_r \ | \ i \in \{2,\ldots,r-1\}\};\\
        &\nu^2(C) = \emptyset;
    \end{align*}
    therefore, $C$ is rational. By the $\mathbb{Z}_2$-symmetry of $\Gamma(D_r)$ (see Proposition~\ref{p:zmod2symmetry}), $C^{-1}$ is also rational. We see that
    \begin{equation*}
        C(e_{r-1}-e_r) = -(e_1 + e_{r-1}) < 0, \ \ C^{-1}(e_{r-1}-e_r) = e_1 -e_{r-1} > 0.
    \end{equation*}
    By Lemma~\ref{l:rat_v_lower}, $s_{r-1}C$ is rational. Likewise, one verifies that $s_rC^{-1}$ is rational by the same lemma.     
    
    Now, let us show that $s_i C$ for $i \in \{1,\ldots,r-2\}\cup\{r\}$ is not rational. For $i \in \{1,\ldots,r-2\}$, we see that
    \begin{equation*}
        s_iC(e_i-e_{i+1}) = e_i - e_{i+1};
    \end{equation*}
    and for $i = r$, we see that
    \begin{equation*}
        s_r C (e_{r-1} + e_r) = s_r (e_1 - e_{r-1}) = e_1 + e_r \geq e_{r-1} + e_r.
    \end{equation*}
    By Proposition~\ref{p:fixedpt}, $s_iC$ for $i\in \{1,\ldots,r-2\}\cup\{r\}$ is not rational.
\end{proof}
   
  \newpage
\appendix
\section{\texorpdfstring{Rationality graphs $\Gamma(A_3)$, $\Gamma(A_4)$ and $\Gamma(D_5)$}{Rationality graphs in types A3, A4 and D5}}\label{appendix}

\begin{figure}[htb]
\begin{subfigure}[t]{6.5in}
\begin{center}
\includegraphics[scale=0.2]{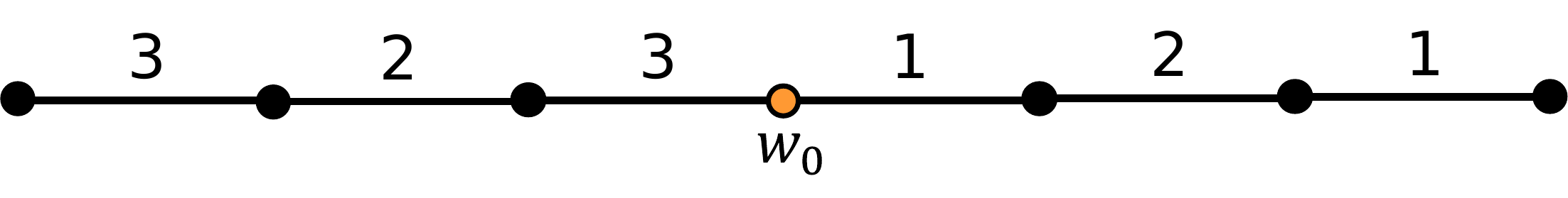}
\end{center}
\caption{Rationality graph $\Gamma(A_3)$.}
\label{f:a3}
\end{subfigure}

\vspace{10mm}

\begin{subfigure}[t]{6.5in}
\begin{center}
\includegraphics[scale=0.2]{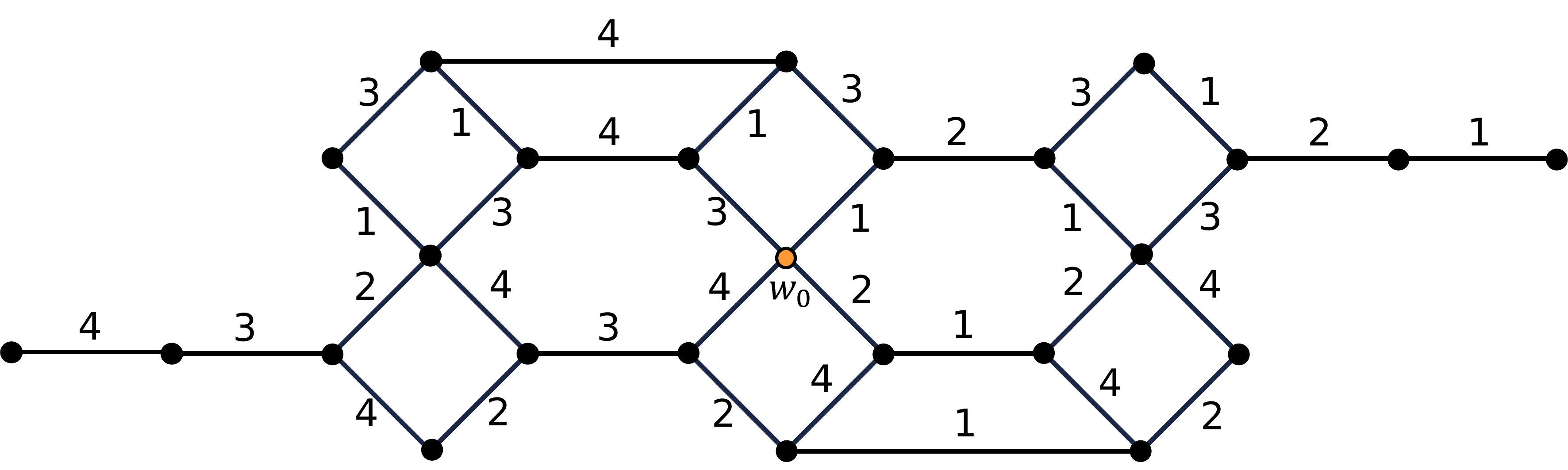}
\end{center}
\caption{Rationality graph $\Gamma(A_4)$.}
\label{f:a4}
\end{subfigure}

\vspace{10mm}

\begin{subfigure}[t]{6.5in}
\begin{center}
\includegraphics[scale=0.2]{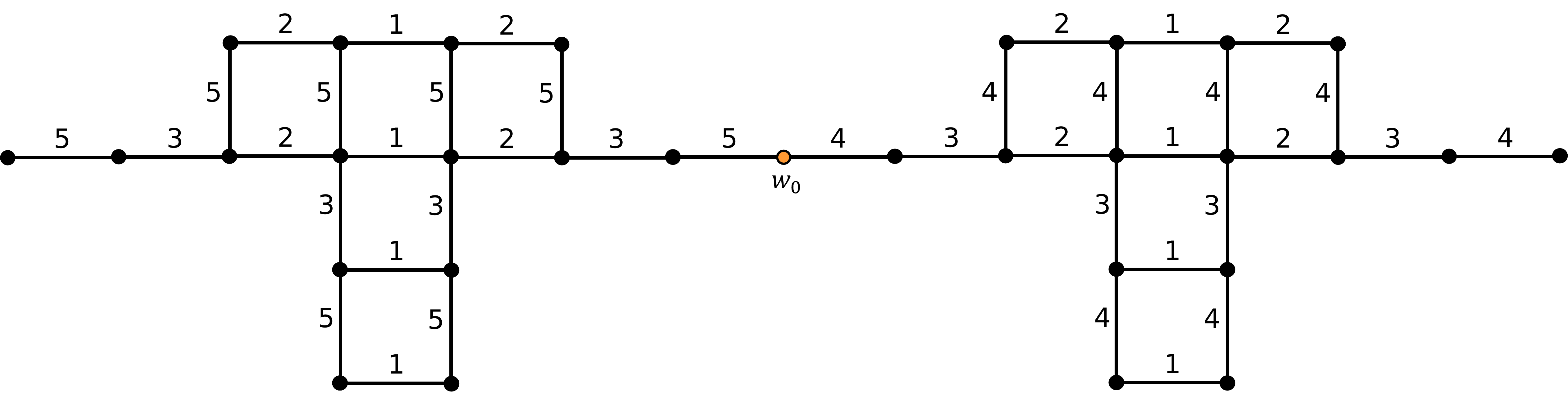}
\end{center}
\caption{Rationality graph $\Gamma(D_5)$.}
\label{f:d5}
\end{subfigure}
\caption{Rationality graphs in types $A_3$, $A_4$ and $D_5$. Each label $i$ of an edge is to be interpreted as a simple reflection $s_i$.}
\label{f:ratgraphs}
\end{figure}


\begin{thebibliography}{99}


\bibitem{armstrong} D. Armstrong, Generalized noncrossing partitions and combinatorics of Coxeter groups, \emph{Mem. Amer. Math. Soc.} \textbf{202}(949) (2009), 1--159. \doi{10.1090/S0065-9266-09-00565-1}

\bibitem{schubertpos} A. Berenstein and A. Zelevinsky, Total positivity in Schubert varieties, \emph{Comment. Math. Helv.} \textbf{72} (1997), 128--166. \doi{10.1007/PL00000363}

\bibitem{chari} V. Chari and A. Pressley, \emph{A Guide to Quantum Groups}, Cambridge Univ. Press (1995). 

\bibitem{etingof} P. Etingof and O. Schiffmann, \emph{Lectures on Quantum Groups} 1st ed., International Press, Somerville, MA (1998).

 \bibitem{fg-moduli} V. Fock and A. Goncharov, Moduli spaces of local systems and higher Teichm\"uller theory, \emph{Publ. Math. Inst. Hautes Etudes Sci.} \textbf{103}(2006), 1--211. \doi{10.1007/s10240-006-0039-4}
 
\bibitem{fomin13} S. Fomin, L. Williams and A. Zelevinsky, Introduction to Cluster algebras. Chapters 1-3, Preprint (2020). \doi{10.48550/arXiv.1608.05735}

 \bibitem{fathers} S. Fomin and A. Zelevinsky, Cluster algebras I: Foundations, \emph{J. Amer. Math. Soc.} \textbf{15}(2) (2002), 497--529. \doi{10.1090/S0894-0347-01-00385-X}

\bibitem{bruhatpos} S. Fomin and A. Zelevinsky, Double Bruhat cells and total positivity, \emph{J. Amer. Math. Soc.} \textbf{12}(2) (1999), 335--380.
\doi{10.1090/S0894-0347-99-00295-7}

\bibitem{fomin_intro} S. Fomin, Total positivity and cluster algebras, In: \emph{Proc. Int. Cong. Math.} Vol. II, 125--145, Hindustan Book Agency, New Delhi (2010). \doi{10.1142/9789814324359_0043}

 \bibitem{dasbuch} M. Gekhtman, M. Shapiro and A. Vainshtein, {C}luster algebras and {P}oisson geometry, \emph{Math. Surveys Monogr.} \textbf{167} (2010). \doi{10.1090/surv/167}
 
 \bibitem{double} M. Gekhtman, M. Shapiro and A. Vainshtein, Drinfeld double of $\GL_n$ and generalized cluster structures, \emph{Proc. Lond. Math. Soc} \textbf{116}(3) (2018), 429--484. \doi{10.1112/plms.12086}

 \bibitem{multdual} M. Gekhtman and D. Voloshyn, Generalized cluster structures related to Poisson duals of $\SL_n$, Preprint (2023).  \doi{10.48550/arXiv.2312.04859}


\bibitem{humphreys_r} J. E. Humphreys, \emph{Reflection groups and Coxeter groups}, Cambridge Univ. Press (1990).
\doi{10.1017/CBO9780511623646}

 \bibitem{leclerc} B. Leclerc, Cluster algebras and representation theory, In: \emph{Proc. Int. Cong. Math.} Vol. IV, 2471--2488, Hindustan Book Agency, New Delhi (2010). \doi{10.1142/9789814324359_0154}

 \bibitem{multdouble} D. Voloshyn, Multiple generalized cluster structures on $D(\mathrm{GL}_n)$, \emph{Forum of Mathematics, Sigma} (\textbf{11})(46) (2023), 1--78. \doi{10.1017/fms.2023.44}

 \bibitem{williams_intro} L.K. Williams, Cluster algebras: an introduction, \emph{Bull. Amer. Math. Soc. (N.S.)} \textbf{51} (2014), 1--26. \doi{10.1090/S0273-0979-2013-01417-4}

  \end{thebibliography}
 \end{document}